\documentclass[11pt]{article}
\usepackage[a4paper,margin=25mm]{geometry}
\usepackage[T1]{fontenc}
\usepackage{lmodern,microtype}
\usepackage{amsmath,amssymb,amsthm,mathtools,bm}
\usepackage{booktabs,array,enumitem}
\usepackage{xcolor,graphicx}
\usepackage{pgfplots}
\pgfplotsset{compat=1.18}
\usepackage[numbers,square,sort&compress]{natbib}
\usepackage{xurl}
\usepackage[colorlinks=true,linkcolor=blue!45!black,citecolor=blue!45!black,urlcolor=blue!45!black]{hyperref}
\usepackage{aliascnt}
\usepackage{cleveref}
\hypersetup{
 pdftitle={Rate-Optimal Spectral Likelihood-Ratio Clustering in Stochastic Block Models},
 pdfauthor={Zhixin Zhou},
 pdfsubject={Constant oracle-risk bounds for spectral likelihood-ratio clustering},
 pdfkeywords={stochastic block model, spectral likelihood-ratio clustering, community detection, oracle risk}
}
\setlist{itemsep=3pt,topsep=5pt,leftmargin=2em}
\allowdisplaybreaks[2]
\newtheorem{theorem}{Theorem}[section]
\newaliascnt{lemma}{theorem}
\newtheorem{lemma}[lemma]{Lemma}
\aliascntresetthe{lemma}
\newaliascnt{proposition}{theorem}
\newtheorem{proposition}[proposition]{Proposition}
\aliascntresetthe{proposition}
\newaliascnt{corollary}{theorem}
\newtheorem{corollary}[corollary]{Corollary}
\aliascntresetthe{corollary}
\theoremstyle{definition}
\newaliascnt{definition}{theorem}

\aliascntresetthe{definition}
\newaliascnt{assumption}{theorem}

\aliascntresetthe{assumption}
\theoremstyle{remark}
\newaliascnt{remark}{theorem}

\aliascntresetthe{remark}

\AddToHook{env/theorem/begin}{\crefalias{section}{theorem}}
\AddToHook{env/lemma/begin}{\crefalias{section}{lemma}}
\AddToHook{env/proposition/begin}{\crefalias{section}{proposition}}
\AddToHook{env/corollary/begin}{\crefalias{section}{corollary}}
\AddToHook{env/definition/begin}{\crefalias{section}{definition}}
\AddToHook{env/assumption/begin}{\crefalias{section}{assumption}}
\AddToHook{env/remark/begin}{\crefalias{section}{remark}}

\newcommand{\E}{\mathbb E}
\newcommand{\PP}{\mathbb P}
\newcommand{\one}{\mathbf 1}
\newcommand{\diag}{\operatorname{diag}}

\newcommand{\op}{\mathrm{op}}
\newcommand{\Ber}{\operatorname{Bern}}

\newcommand{\col}{\operatorname{col}}

\newcommand{\argmax}{\operatorname*{arg\,max}}

\newcommand{\norm}[1]{\left\lVert#1\right\rVert}
\newcommand{\abs}[1]{\left\lvert#1\right\rvert}

\newcommand{\ind}[1]{\mathbf1\{#1\}}
\newcommand{\Mis}{\operatorname{Mis}}
\title{Rate-Optimal Spectral Likelihood-Ratio\\Clustering in Stochastic Block Models}
\author{Zhixin Zhou}
\date{October 30, 2026}
\begin{document}
\maketitle
\begin{abstract}
Spectral clustering commonly assigns communities by clustering a
low-dimensional representation, whereas likelihood refinement can
improve an initial assignment using the observed edges. We ask whether
sharp community-recovery accuracy can be retained when the refinement
no longer accesses individual edges. We develop spectral likelihood-ratio
clustering for stochastic block models using one regularized adjacency
eigensystem and the original vertex degrees. The method estimates connection
probabilities and updates each vertex's label by a likelihood-ratio
classifier on reconstructed rows. The refinement accepts any sufficiently
accurate initialization, including one computed from the same graph.
Its expected misclassification proportion is bounded by a constant times
the error of a raw-edge oracle classifier, plus the probability that
initialization misses the required accuracy. Repeated $k$-means++ on
the retained spectral coordinates supplies one complete procedure with
a constant-factor oracle guarantee. The oracle knows the model parameters
and the other labels.
The guarantee holds for a fixed number of communities, comparable
positive block probabilities and community sizes, and a nondegenerate
normalized block matrix. It allows arbitrarily slowly diverging expected
degrees under the stated upper sparsity condition. We explain the
likelihood-ratio construction first, then the regularization and exact-degree
correction required by the general result. The proof combines sharp
local testing, truncated spectral-row estimates and leave-one-out
comparisons.
\end{abstract}
\section{Introduction}\label{sec:related-work}

The stochastic block model has a long history as a statistical model
for community structure in networks. \citet{holland1983} introduced the stochastic blockmodel
framework in 1983, in which connection probabilities are determined by the
communities of the two endpoints. Subsequent work developed likelihood
and Bayesian estimation of latent block structure~\citep{snijders1997,nowicki2001},
statistical formulations of community recovery~\citep{bickelchen2009},
and asymptotic theory for likelihood and variational
estimators~\citep{bickel2013}. A central question throughout this
literature is how accurately the unknown communities can be recovered
by a computationally feasible procedure.

\paragraph{Spectral clustering.}
A standard approach first represents each vertex by a small number of
spectral coordinates and then clusters these coordinates, typically by
$k$-means. \citet{rohe2011} studied Laplacian
spectral clustering, while \citet{sussman2012} analyzed adjacency spectral embedding.
\citet{lei2015} established consistency guarantees for
adjacency spectral clustering under sparse block models. These results
provide a statistical foundation for the familiar combination of a
spectral representation and a geometric clustering step.

Sparse graphs require additional care because unusually large or small
degrees can affect the spectral representation. Regularized spectral
procedures were studied by \citet{qin2013} and \citet{joseph2016}; \citet{llv2017}
established concentration results for regularized random graphs.
\citet{zhouamini2019} analyzed data-driven
regularization, spectral truncation and clustering in general bipartite
models, including expected degrees growing arbitrarily slowly.
\citet{chin2015} developed further spectral recovery
methods for sparse block models. Thus a logarithmic lower bound on
expected degree is not an intrinsic requirement of every spectral
approach.

\paragraph{Likelihood refinement and optimal error rates.}
Consistency alone does not determine whether the misclassification
rate is close to the best possible rate. A common way to improve an
initial clustering is to estimate its connection probabilities and
reclassify each vertex using its connections to the estimated
communities. Comparing two candidate labels is then a local
likelihood-ratio calculation, possibly with a penalty or a
pseudo-likelihood approximation. \citet{amini2013} developed scalable pseudo-likelihood methods
based on incident block counts. \citet{zhangzhou2016}
characterized minimax misclassification rates through local testing,
and \citet{gao2017} showed how penalized
local likelihood refinement can turn a weakly consistent initializer
into an estimator attaining optimal misclassification rates.

The distinction between a leading error exponent and an actual
constant-factor risk comparison matters here. \citet{zhouamini2020} combined spectral initialization with two
pseudo-likelihood classifications to obtain an oracle-optimal
biclustering rate up to constants. \citet{zhouli}
established sharp testing bounds that retain the prefactor and used
likelihood updates to obtain rate-optimal SBM recovery. These methods
show that accurate initialization need not itself be statistically
optimal: likelihood refinement can recover information that a purely
geometric assignment does not use. In these constructions the local
refinement uses observed edges or observed block counts, in addition
to the initial clustering.

\paragraph{The question addressed here.}
This development suggests a natural question: can a spectral
representation itself support the sharp recovery accuracy associated
with likelihood refinement, without returning to individual adjacency
entries? By a \emph{spectral representation} we mean the retained
leading eigenvectors together with their associated eigenvalues.
The question is not whether spectral initialization can be followed by
an edge-based likelihood update, but whether the update can instead
be computed from the retained representation.

There is already important progress on the accuracy of spectral
methods themselves. \citet{afwz} developed
entrywise eigenvector analysis with sharp community-recovery
consequences. \citet{zhang} obtained matching leading-exponent
bounds for spectral clustering through truncated spectral-row
analysis. Probabilistic classification of spectral coordinates has
also been studied, for example by \citet{suwan2016}. Our target is a constant-factor comparison
with a raw-edge likelihood oracle, rather than only a leading-exponent
comparison or a Gaussian model for the embedding coordinates.

We give an affirmative answer to this oracle-risk question for a
\emph{degree-augmented spectral representation}: one eigensystem of a
regularized adjacency matrix together with the original vertex degrees.
We call the resulting procedure \emph{spectral likelihood-ratio clustering}.
It reconstructs incident block counts, estimates block probabilities,
and updates the partition through vertexwise likelihood-ratio classification.
None of these later operations accesses individual adjacency entries.
The main theorem treats refinement separately from initialization. It
accepts any initial partition with error of order the reciprocal of
expected degree, even when the partition is computed from the same graph.
The expected error after refinement is at most a fixed constant times
that of the raw-edge oracle, plus the probability that the initializer
fails to reach the required accuracy. The oracle knows the true connection
probabilities and the other vertices' community labels. Repeated
$k$-means++ gives one end-to-end procedure for which the additional
failure term is of oracle order. The result allows
arbitrarily slowly diverging expected degrees on the positive,
well-conditioned model class specified in Section~\ref{sec:statements}.
The upper sparsity restriction and all other assumptions are stated in
Section~\ref{sec:main-result}; no logarithmic degree lower bound is
imposed.

The spectral reconstruction estimates how a vertex's connections are
distributed across communities. The observed degree supplies their
exact total, which is not in general preserved by spectral truncation.
This separation motivates the degree correction in the complete
procedure. We first explain a spectrum-only construction to make the
likelihood idea transparent, and then give the degree-augmented
procedure covered by the theorem. The result is therefore a statement
about this retained representation and its likelihood refinement, not a
claim that $k$-means alone or the preliminary uncorrected version has the
same sharp risk. The link from approximate geometric clustering to an
initial misclassification bound follows the organization of
\citet[Corollary 1 and Remark 8]{zhouamini2019}; the randomized
seeding guarantee comes from \citet{arthur2007}. We separately account
for its failure probability rather than treating a constant-probability
approximation as an oracle-scale guarantee. Section~\ref{sec:oracle-scale} relates the oracle
error to the sharp testing exponent used by \citet{zhouli},
including the square-root prefactor, and distinguishes this comparison
from a classwise minimax statement.

\paragraph{Other recovery objectives and model extensions.}
Misclassification risk is one of several recovery criteria for SBMs.
Detection transitions were investigated by \citet{decelle2011}, with sharp two-community results by
\citet{mns2018} and
\citet{massoulie2014}; nonbacktracking methods provide
another spectral approach in very sparse networks~\citep{krzakala2013}.
Exact-recovery thresholds and efficient procedures were studied by
\citet{abh2016} and \citet{abbesandon2015}, including semidefinite approaches by
\citet{hajek2016} and \citet{aminilevina2018}. \citet{abbe2018} surveys these
recovery notions. Our average-risk guarantee concerns a different
comparison and yields exact recovery when the total oracle error
vanishes; it does not address detection at bounded expected degree.

Recovery of latent structure provides a complementary set of objectives.
Random dot product graph models assign latent vectors to vertices and
use their inner products as connection probabilities~\citep{young2007}.
\citet{sussman2014} established consistency of adjacency-spectral latent
position estimates and studied supervised vertex classification from
those estimates. \citet{rubindelanchy2022} developed a generalized
random dot product graph model that permits indefinite connectivity
structure, together with consistency and asymptotic normality results
for spectral embeddings under their assumptions. This distinction
matters for SBMs: the ordinary dot product representation requires a
positive-semidefinite block matrix, whereas the generalized model also
allows negative eigenvalues. A related notion of optimality concerns
the connectivity matrix and its singular subspaces rather than the
vertex labels. In sparse bipartite Bernoulli models,
\citet{zhou2021subspace} established minimax-optimal rates for estimating
low-rank connectivity matrices and their singular subspaces under
matrix and projection losses. These results place spectral methods in
a broader theory of latent-structure estimation, but optimal estimation
of a matrix or subspace does not by itself establish a sharp
misclassification-risk bound. Our refinement result in
Theorem~\ref{restore:sbm-theorem} addresses this further classification
question for the SBM: whether the retained spectral representation,
augmented by degrees, supports a constant-factor comparison with the
raw-edge oracle. It does not assert the same guarantee for general
random dot product graphs.

Degree heterogeneity is a separate modeling issue. \citet{karrer2011} introduced a degree-corrected blockmodel;
related consistency, spectral and minimax results include
\citet{zhao2012}, \citet{jin2015}, and
\citet{gmzz2018}. In the ordinary SBM studied
here, preserving observed degrees corrects a reconstruction; it does
not introduce vertex-specific degree parameters into the sampling model.

Section~\ref{sec:statements} introduces the stochastic block model,
notation and oracle. Section~\ref{sec:algorithm} describes spectral likelihood-ratio clustering
and its intuition, starting with the simple spectral construction.
Section~\ref{sec:main-result} states the theorem, describes its sharp
error scale and gives an outline of the proof. Section~\ref{sec:experiments}
compares the empirical risk with the oracle as the graph size increases
and reports implementation checks for the growing-size experiment. The seven
appendices contain all supporting arguments.

\section{Stochastic Block Model}\label{sec:statements}

Let $z\in[K]^n$ be the community labels and $P=P^\top\in [0,1]^{K\times K}$ the block
probability matrix, where $K\ge2$ is fixed. Write
$n_k=\sum_i\ind{z_i=k}$. The adjacency matrix $A$ is symmetric, has
zero diagonal, and has independent upper-triangular entries with
\begin{equation}\label{eq:mean-model}
 A_{ij}\sim\Ber(P_{z_i z_j}),\qquad i<j.
\end{equation}
Put $p=\max_{k,\ell\in[K]}P_{k\ell}$. For fixed $\beta\ge1$,
$\omega>1$, and a fixed positive signal constant, assume
\begin{equation}\label{eq:structural}
 n_k\ge\frac{n}{\beta K},\qquad
 0<\frac{p}{\omega}\le P_{k\ell}\le p,\qquad
 \sigma_K(P)\ge c p.
\end{equation}
Here $\sigma_K(P)$ is the smallest singular value of $P$. The matrix $P$ may vary with $n$. For nonnegative
quantities, $f\lesssim g$ means $f\le Cg$, $f\gtrsim g$ means
$g\lesssim f$, and $f\asymp g$ means both. Unless stated otherwise,
the implicit constants may depend only on fixed $K,\beta,\omega$,
the fixed positive lower bound on $\sigma_K(P)/p$, the uniform upper
bound on $np^2$, and the fixed algorithm constants $b_0,c_0$ introduced
in Section~\ref{restore:algorithm}. A refinement bound may additionally
depend on the fixed constant in its stated initialization-accuracy
condition. An end-to-end randomized procedure may also have a fixed
restart multiplier. These constants never depend on $n$, on the parameters
within this class, or on the number of likelihood refits.
Local lemmas state any further dependence on fixed thresholds.
We retain explicit $c,C$ when their choice or position in a bound
matters; their values may change between occurrences. Vertex indices
are $i,j,j'$ and community indices are $k,\ell$, with $r$ used
when a third community is needed.
For any label vector $\widehat z$, use the same size convention
$n_k(\widehat z)=\sum_i\ind{\widehat z_i=k}$, so $n_k=n_k(z)$.

The misclassification proportion is
\[
 \Mis(\widehat z,z)=\frac1n\min_{\pi\in S_K}
     \sum_i\ind{\pi(\widehat z_i)\ne z_i},
\]
where $S_K$ is the set of permutations of $[K]$.
Following the classification-operator organization of \citet{zhouli},
write the row-classification formula as
\begin{equation}\label{eq:sbm-oracle}
 [\mathcal L(A,P,\widehat z)]_i
 =\argmax_{k\in[K]}\sum_{j\ne i}
   \{A_{ij}\log P_{k\widehat z_j}+(1-A_{ij})\log(1-P_{k\widehat z_j})\}.
\end{equation}
Enumerate the communities in their fixed tie order, so a tie is won
by the smallest index. For the raw graph the oracle classifier outputs
$[\mathcal L(A,P,z)]_i$: it uses the true neighbor labels but not
$z_i$. An exact global size constraint is not used to infer that label.

The oracle experiment depends on the true community, not on which
vertex in that community is tested. For a representative target in
community $k$, let $b_{r}$ be its raw count of edges to community
$r$. Under $\PP_k$, these counts are independent and
\[
 b_{r}\sim\operatorname{Bin}
       \bigl(n_{r}-\ind{r=k},P_{kr}\bigr),
 \qquad r=1,\ldots,K.
\]
There are $n_k-1$ trials in the target's own community and $n_{r}$
in every other community. The self-loop is deterministically zero
under every candidate label and contributes no likelihood term; it
is not an additional Bernoulli nonedge. For $\ell\ne k$, the exact
log-likelihood ratio in favor of $k$ against $\ell$ is
\[
T_{k\ell}=\sum_{r=1}^K\Bigl[b_{r}\log\frac{P_{kr}}{P_{\ell r}}+(n_{r}-\ind{r=k}-b_{r})\log\frac{1-P_{kr}}{1-P_{\ell r}}\Bigr].
\]
For each fixed $k$, all the contrasts $T_{k\ell}$ use the same count
vector; they are not assumed independent. Different vertices in that
community give different realizations of this same experiment, so no
vertex index is needed in its distribution or risk. The average oracle
error is exactly
\begin{equation}\label{eq:oracle-risk}
 R_{\mathrm{oracle}}
 =\sum_{k=1}^K\frac{n_k}{n}\,
   \PP_k\!\left\{
     \min_{\ell<k}T_{k\ell}\le0
     \ \text{or}\ 
     \min_{\ell>k}T_{k\ell}<0
   \right\}.
\end{equation}
A minimum over an empty set is $+\infty$. The two inequalities in
\eqref{eq:oracle-risk} implement the fixed tie rule. This is the same
average error as $n^{-1}\sum_i\PP\{[\mathcal L(A,P,z)]_i\ne z_i\}$,
not a new oracle. Section~\ref{sec:oracle-scale}, specifically
\eqref{eq:oracle-scale-comparison}, relates it to the Chernoff
information and shows $R_{\mathrm{oracle}}\asymp e^{-I}$ with the
sharp prefactor included in $I$.

\section{Spectral likelihood-ratio clustering}\label{sec:algorithm}

For fixed model parameters and neighbor labels, the log-likelihood ratio
between two candidate communities depends on the incident block counts
and community sizes. Replacing raw counts by their spectral reconstructions
gives a vertexwise likelihood-ratio classifier. Fitting the block
probabilities and applying this classifier to all vertices updates the
estimated partition without reading individual edges again. We first
give the spectrum-only construction; the general procedure then adds
regularization and exact preservation of the observed row totals.

\subsection{The spectrum-only construction}\label{sec:simple}

Retain the $K$ eigenpairs of $A$ with largest absolute eigenvalues and
form
\[
 AU=U\Lambda,\qquad U^\top U=I_K,\qquad \widehat A=U\Lambda U^\top.
\]
Here $\widehat A$ is the rank-$K$ spectral reconstruction, before
any degree correction. Eigenvalues are selected by magnitude, so
informative negative ones are retained. The likelihood and parameter
fits below always exclude the diagonal. This section uses only
$(U,\Lambda)$ after the spectral calculation, not the observed
degree vector.

The vertexwise classifier in \eqref{eq:sbm-oracle} is evaluated at
its first argument: this section substitutes $\widehat A$ for $A$,
and Section~\ref{restore:algorithm} substitutes $\widehat A^{\mathrm{c}}$.
These reconstructions can be signed or asymmetric. Their classification
criterion is a working likelihood, not an assertion that their entries
are independent Bernoulli observations. The term \emph{spectral
likelihood-ratio} refers to comparisons of this criterion, not to an
exact joint likelihood for the retained eigenpairs.

\subsubsection{Block counts and likelihood-ratio classification}

Using the same first-argument convention, define the block counts by
\begin{equation}\label{eq:simple-counts}
 b_{i\ell}(A,\widehat z)=\sum_{j\ne i:\widehat z_j=\ell}A_{ij}.
\end{equation}
In particular, the spectral counts are computable from the factors:
\begin{equation}\label{eq:simple-factor-counts}
 b_{i\ell}(\widehat A,\widehat z)
 =U_{i\cdot}\Lambda
       \sum_{j:\widehat z_j=\ell}U_{j\cdot}^\top
       -\widehat A_{ii}\ind{\widehat z_i=\ell}.
\end{equation}
The expression uses only the vertex's retained spectral coordinates
and the sums of those coordinates within the estimated communities.
Thus changing the current labels changes the block counts through
these sums, without another eigendecomposition or edge lookup.
The reconstructed counts need not be integers or nonnegative. Grouping the criterion
in \eqref{eq:sbm-oracle} by the current labels gives the candidate-$k$
score
\begin{equation}\label{eq:simple-score}
 \sum_{\ell=1}^K\left\{
 b_{i\ell}(\widehat A,\widehat z)\log P_{k\ell}
 +[n_\ell(\widehat z)-\ind{\widehat z_i=\ell}-b_{i\ell}(\widehat A,\widehat z)]
       \log(1-P_{k\ell})\right\}.
\end{equation}
Thus the raw-row oracle, the simple spectral update, and the general
update below use the same classification operator with different
matrix and parameter inputs. The full nonedge term is retained.

The probability fit is just a block average. For this step write
$b_{i\ell}=b_{i\ell}(\widehat A,\widehat z)$ and compute
\begin{equation}\label{eq:block-average-operator}
 \widehat P_{k\ell}\leftarrow
 \begin{cases}
 \dfrac{\sum_{i:\widehat z_i=k} b_{i\ell}}
       {n_k(\widehat z)\,n_\ell(\widehat z)},
       & k\ne\ell,\\[8pt]
 \dfrac{\sum_{i:\widehat z_i=k} b_{ik}}
       {n_k(\widehat z)\,[n_k(\widehat z)-1]},
       & k=\ell.
 \end{cases}
\end{equation}
Both cases average the reconstructed counts over community $k$ and
then divide by the number of possible neighbors. The second excludes
self-connections. Symmetry of $\widehat A$ makes this fit symmetric. Finally, replace estimates below $n^{-2}$
by $n^{-2}$ and those above $1/2$ by $1/2$; estimates between these
bounds are unchanged. These restrictions apply only to $\widehat P$,
not to the reconstructed counts. A zero denominator invokes the
fixed fallback below.

\subsubsection{Initialization and iteration}\label{sec:initialization}

Initialization and likelihood refinement have different tasks. The
initializer need only provide a partition with error of order $(np)^{-1}$;
it is not itself required to attain the oracle risk. The refinement
result in Section~\ref{sec:main-result} is not tied to one clustering
routine. It allows the initial labels to depend on the same graph as
the subsequent reconstruction and parameter fit.

One standard choice is $k$-means++ on row-normalized spectral coordinates.
Set $x_i=U_{i\cdot}/\norm{U_{i\cdot}}_2$ for a nonzero row, and assign a
fixed unit vector to a zero row. Choose the first center uniformly from
these $n$ points. Choose each of the remaining $K-1$ centers with
probability proportional to the squared distance to its nearest already
chosen center, and assign each point to its nearest center. This is the
original squared-distance sampling rule of \citet{arthur2007}, rather
than a different greedy seeding variant. If all sampling weights vanish,
fill the remaining center positions using unused vertex indices in a
fixed order.

A prescribed finite number of Lloyd steps may follow: replace each
nonempty cluster center by its mean, keep the previous center for an
empty cluster, and reassign to nearest centers. Each step must leave
the within-cluster sum of squared distances nonincreasing. Independent
runs are compared using this same sum, retaining the smallest one;
true labels, true probabilities, and oracle errors are never used to
select a run. Distance and run-selection ties have fixed rules. An
empty fitted group invokes the fixed fallback. The proof shows that
this cannot occur on the successful low-cost event.

The choice of initialization precision and its probability are distinct.
On the regularized spectral event, the ideal normalized centers are
separated and their total squared approximation error is $O(1/p)$.
A constant-factor $k$-means fit therefore yields $O((np)^{-1})$
misclassification. A single $k$-means++ run supplies such a fit only
with a fixed success probability under the general approximation
bound. Repetition is needed to obtain the sharper failure guarantee
used by the complete procedure; Section~\ref{restore:algorithm} gives
a degree-based restart rule, and Appendix~\ref{restore:fit} proves its
conditional-on-the-graph guarantee. This role of the geometric step is
consistent with \citet[Corollary 1 and Remark 8]{zhouamini2019}.
For the unregularized illustrative procedure here, the number of runs
is a prescribed positive integer; no sparse oracle-risk guarantee is
asserted for this preliminary construction.

Starting from the chosen initial labels $\widehat z^{(0)}$, set
$\widehat z=\widehat z^{(0)}$. Compute $\widehat P$ by
\eqref{eq:block-average-operator}, apply the stated probability bounds,
and then update
\begin{equation}\label{eq:simple-fit}
 \widehat z\leftarrow\mathcal L(\widehat A,\widehat P,\widehat z).
\end{equation}
Repeat the probability fit and label update for a specified finite
positive number of rounds. Label updates are simultaneous, with the
right-hand side evaluated at the pre-update labels. Empty block-pair
sets or other undefined operations use the fixed fallback.
The probability bounds depend only on $n$, so no degree or true
probability parameter is required. For $n\ge K+1$ the interval is nonempty.

The block mean in \eqref{eq:block-average-operator}, after the stated
probability bounds are applied, maximizes its Bernoulli working
criterion, including when a reconstructed block total is signed.
This fact does not establish convergence or monotonicity of the
simultaneous likelihood label updates. Nonincrease of the geometric
Lloyd objective is a separate statement and is not used to claim
monotonicity of the likelihood iterations.
The spectrum-only procedure illustrates the likelihood-ratio
construction; the guarantees below concern the corrected reconstruction
of the complete procedure next.

\subsection{The general degree-plus-spectrum algorithm}
\label{restore:algorithm}

The general clustering procedure retains raw degrees in addition to
spectral information. It regularizes high-degree vertices before computing
$\widehat A$, then corrects the row totals. The block-average fit
and vertexwise likelihood-ratio classifier are given explicitly below.
Fix sufficiently large $b_0>0$ and sufficiently small
$c_0>0$, depending only on the structural class. The refinement steps
and their tie rules are deterministic once the initial partition is
provided. Initialization may be randomized; undefined operations use
the fixed fallback.

\paragraph{Regularized adjacency spectrum.}
Let
\[
 D_i=\sum_jA_{ij},\qquad \bar D=\frac1n\sum_iD_i,
 \qquad \tau=b_0\bar D.
\]
The all-zero graph has the fixed output. Otherwise form the diagonal
matrix
\[
 W_i=\begin{cases}
 1,&D_i\le\tau,\\
 \sqrt{\tau/D_i},&D_i>\tau,
 \end{cases}
 \qquad W=\diag(W_i),\qquad \widetilde A=WAW.
\]
Retain the raw degrees $D_i$ and the $K$ eigenpairs of $\widetilde A$
with largest absolute eigenvalues:
\[
 \widetilde AU=U\Lambda,\qquad U^\top U=I_K.
\]
The decoder uses only these retained quantities; $\bar D,\tau,W$ are
recovered from the degrees. Edges between vertices below the threshold
are unchanged by the cap.

\paragraph{Original-scale spectral reconstruction and degree correction.}
The eigenpairs here belong to $\widetilde A=WAW$, so
$U\Lambda U^\top$ is a reconstruction on the regularized scale.
On the original adjacency scale, set
\[
 \widehat A=W^{-1}U\Lambda U^\top W^{-1}.
\]
For $W=I$ this reduces to $\widehat A=U\Lambda U^\top$, exactly
as in the simple version. Write $\widehat A^{\mathrm{c}}$ for its
degree-preserving, zero-diagonal correction:
\begin{equation}\label{restore:reconstruction}
 \widehat A^{\mathrm{c}}_{ii}=0,\qquad
 \widehat A^{\mathrm{c}}_{ij}=\widehat A_{ij}
      +\frac{D_i-\sum_{j'\ne i}\widehat A_{ij'}}{n-1},\quad j\ne i.
\end{equation}
Then $\sum_j\widehat A^{\mathrm{c}}_{ij}=D_i$ exactly. The entries are not clipped;
$\widehat A^{\mathrm{c}}$ can be signed and asymmetric. An isolated vertex has a
zero reconstructed row and does not halt classification of other
vertices. Everything in \eqref{restore:reconstruction} is computed
from the retained degrees and eigenpairs.

For the same labels $\widehat z$, the single count definition
\eqref{eq:simple-counts} gives
\begin{equation}\label{eq:count-bridge}
 \begin{aligned}
 b_{i\ell}(\widehat A^{\mathrm{c}},\widehat z)-b_{i\ell}(\widehat A,\widehat z)
 &=\frac{n_\ell(\widehat z)-\ind{\widehat z_i=\ell}}{n-1}
     \left(D_i-\sum_{j\ne i}\widehat A_{ij}\right),\\
 \sum_{\ell=1}^K b_{i\ell}(\widehat A^{\mathrm{c}},\widehat z)&=D_i.
 \end{aligned}
\end{equation}
At fixed $\widehat z$ and $\widehat P$, the resulting change in the
candidate-$k$ versus candidate-$\ell$ log-likelihood contrast is
\begin{equation}\label{eq:score-bridge}
 \left(D_i-\sum_{j\ne i}\widehat A_{ij}\right)
 \frac1{n-1}\sum_{j\ne i}
 \log\frac{\widehat P_{k\widehat z_j}(1-\widehat P_{\ell \widehat z_j})}
               {\widehat P_{\ell \widehat z_j}(1-\widehat P_{k\widehat z_j})}.
\end{equation}
This is an identity for one update at fixed inputs, not for the final
outputs of two differently fitted procedures. An inactive cap does
not make the degree correction vanish.

\paragraph{Initialization.}
Supply initial labels $\widehat z^{(0)}$. The general refinement theorem
allows any method satisfying its stated accuracy requirement; the method
may use the same full graph. To make the entire procedure depend only on
the retained representation, compute these labels from the retained
eigenpairs and degrees. One such choice is the row-normalized
$k$-means++ procedure of Section~\ref{sec:initialization}, applied to this
regularized $U$. Run it independently $\max\{1,\lceil C\bar D\rceil\}$
times and retain the smallest within-cluster squared-distance objective,
where the fixed multiplier $C$ is sufficiently large for the structural
class. Optional Lloyd steps are finite and nonincreasing as specified
there. Neither the unknown $p$ nor true labels enter this restart rule.
A smaller practical restart budget remains covered by the general
bound with its initialization failure term, but is not automatically
covered by the end-to-end corollary.

\paragraph{Block fitting and classification.}
Set $\widehat z=\widehat z^{(0)}$. Given current labels, recompute
$b_{i\ell}=b_{i\ell}(\widehat A^{\mathrm{c}},\widehat z)$
and form the block averages in \eqref{eq:block-average-operator}.
Before applying any probability bounds, average the two directions:
\begin{equation}\label{eq:block-fit}
 \widehat P\leftarrow\frac{\widehat P+\widehat P^\top}{2}.
\end{equation}
This is needed because $\widehat A^{\mathrm{c}}$ can be asymmetric.
Then replace estimates below $c_0\bar D/n$ by $c_0\bar D/n$ and
those above $1/2$ by $1/2$. The simple version's lower bound $n^{-2}$
is not used here. Only $\widehat P$ is symmetrized and restricted;
$\widehat A^{\mathrm{c}}$, its degree sums, and its classification
rows remain unchanged.
Update all labels simultaneously by
\begin{equation}\label{eq:fitted-row-score}
 \widehat z\leftarrow\mathcal L(\widehat A^{\mathrm{c}},\widehat P,\widehat z).
\end{equation}
These are precisely the symmetric Bernoulli block fit and the full
edge-and-nonedge row likelihood. There is no target degree parameter
to profile in this ordinary SBM algorithm. Empty groups, empty pair
sets, or undefined operations use the fixed fallback.

Perform at least one completed fit and label update. Any finite number
of these hard refits is permitted, including finite data-dependent
stopping. The proof shows that the required operations are well-defined
on the aggregate event when the initial partition has the stipulated
accuracy. The repeated $k$-means++ implementation uses no true labels,
true parameters, or leave-one-out eigensystems. Once initial labels
have been supplied, the refinement reads only those labels and the
retained degrees and eigenpairs.

\section{Main theorem and proof outline}\label{sec:main-result}

The first result concerns the refinement steps of
Section~\ref{restore:algorithm}, including their original-degree
correction. It separates the accuracy of an input partition from the
risk achieved by refinement. The input partition may be computed from
the same graph used for all subsequent steps. The spectrum-only
construction of Section~\ref{sec:simple} is not assigned this guarantee.

\begin{theorem}[Refinement from a same-graph initialization]
\label{restore:sbm-theorem}
Under \eqref{eq:mean-model}--\eqref{eq:structural}, suppose
\[
 np\longrightarrow\infty,\qquad np^2\le C_1<\infty.
\]
Fix $0<C_0<\infty$. Let $\widehat z^{(0)}$ be any measurable initialization
computed from $A$, possibly with auxiliary randomization. For sufficiently
large fixed $b_0$ and sufficiently small fixed $c_0>0$, start the
block-fit and spectral likelihood-ratio updates of
Section~\ref{restore:algorithm} at $\widehat z^{(0)}$. After any finite
positive number of these updates, their output $\widehat z$ satisfies
\begin{equation}\label{eq:target}
 \E\Mis(\widehat z,z)
 \lesssim R_{\mathrm{oracle}}
 +\PP\!\left\{\Mis(\widehat z^{(0)},z)>\frac{C_0}{np}\right\}.
\end{equation}
The same inequality holds for any almost surely finite, positive,
data-dependent number of updates. Probability and expectation include
any initialization randomness. The implicit constant may depend on
$C_0$ and the fixed quantities in Section~\ref{sec:statements}, but not
on $n$, the initialization rule, or the number of refits. No independence
between the initial labels and any entries of $A$ is assumed.
\end{theorem}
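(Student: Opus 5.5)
We plan to reduce \eqref{eq:target} to a per-vertex statement on a high-probability aggregate event. Let $\mathcal E_0=\{\Mis(\widehat z^{(0)},z)\le C_0/(np)\}$. Let $\mathcal E_{\mathrm{sp}}$ be a graph event on which the regularized spectrum behaves well. On $\mathcal E_{\mathrm{sp}}$ we require $\norm{\widetilde A-\E\widetilde A}_{\op}\lesssim\sqrt{np}$ by the regularized concentration of \citet{llv2017}. We also require that at most $O(n e^{-c np})$ vertices exceed the cap $\tau$, and that $\bar D\asymp np$. On $\mathcal E_0^c$ we bound $\Mis\le1$, which yields the second term. The complement of $\mathcal E_{\mathrm{sp}}$ has probability $e^{-cnp}$ or smaller, up to polynomial factors. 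This is $\lesssim R_{\mathrm{oracle}}$ because, by \eqref{eq:oracle-scale-comparison}, $R_{\mathrm{oracle}}\asymp e^{-I}$ with $I\asymp np$. It therefore remains to bound $n^{-1}\sum_i\PP\{\widehat z_i\ne z_i,\ \mathcal E_0\cap\mathcal E_{\mathrm{sp}}\}$ by $C\,R_{\mathrm{oracle}}$, uniformly over the number of refits.

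The first step is a deterministic perturbation lemma for one update. On $\mathcal E_0\cap\mathcal E_{\mathrm{sp}}$ with $\Mis(\widehat z,z)\le C_0'/(np)$, the fitted $\widehat P$ obeys $\max_{k\ell}|\widehat P_{k\ell}-P_{k\ell}|\lesssim p/\sqrt{np}$. This follows because the block sums of $\widehat A^{\mathrm c}$ differ from those of $A$ by $\mathbf 1_S^\top(\widehat A-A)\mathbf 1_T$ terms controlled through the operator norm. Label errors contribute $O(p\cdot\Mis)$. The degree correction preserves row totals exactly, so the symmetrization and clipping at $c_0\bar D/n$ are inactive for small $c_0$. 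Next, for each vertex $i$, we compare the fitted contrast at $(\widehat A^{\mathrm c},\widehat P,\widehat z)$ with the oracle contrast $T_{z_i\ell}$ computed from row $i$ of $A$. The difference splits into three parts:
\begin{itemize}
\item[(a)] the parameter error, whose size is $|\widehat P-P|/p$ times the counts, that is $O(\sqrt{np})$ coherently; it is reduced because its leading part pairs with $D_i-\E D_i$, and degrees are exact;
\item[(b)] the label-error term, bounded by $\sum_{j:\widehat z_j\ne z_j}A_{ij}$ times a log ratio;
\item[(c)] the spectral row error $\sum_\ell|b_{i\ell}(\widehat A^{\mathrm c},\widehat z)-b_{i\ell}(A,\widehat z)|$.
\end{itemize}
The aim is to show that, off an event of probability $\lesssim e^{-I}$ per vertex, the total perturbation is at most $\delta\, np$ plus a term that is $o(1)$ relative to the local fluctuation. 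The sharp testing lemma of \citet{zhouli} then converts a shift of $\delta np$ in the threshold into only a constant-factor loss in $\PP_k\{T_{k\ell}\le \delta np\}\lesssim e^{-I}$, after tilting. Choosing $\delta$ small relative to the exponent gap is the step that keeps the prefactor.

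The main obstacle is (c), together with the dependence of $\widehat z^{(0)}$ and of every refit on row $i$. Here we use leave-one-out comparisons. For each $i$ we construct $A^{(i)}$ by zeroing row and column $i$ (replacing them by their means). We run the whole deterministic refinement from a leave-one-out initialization and show that the real and leave-one-out outputs differ by only $O(1)$ labels and by $O(1/n)$ in $\widehat P$ on the aggregate event. Row $i$'s count vector is then nearly independent of the labels and parameters entering its own classification. The spectral row error $\norm{(\widehat A-A)_{i\cdot}}$ paired with community indicators is handled with the truncated spectral-row estimates in the style of \citet{afwz} and \citet{zhang}. We write $b_{i\ell}(\widehat A)=A_{i\cdot}UU^\top\mathbf 1_\ell$ plus a remainder, and note that $UU^\top\mathbf 1_\ell$ is close to $\mathbf 1_\ell$ in the $\ell_\infty$ sense. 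The degree correction removes the component along $\mathbf 1$, which truncation does not preserve. The arbitrary same-graph initialization only enters through its accuracy on $\mathcal E_0$: the first refit is analyzed uniformly over all partitions with $\Mis\le C_0/(np)$, by a union bound over the $O(1)$-Hamming-neighborhood effect on row $i$. That refit produces labels whose error is itself $\lesssim R_{\mathrm{oracle}}+e^{-cnp}$, so the accuracy condition propagates to later refits. This also covers any almost surely finite data-dependent number of refits, since the final bound is uniform in the refit count.
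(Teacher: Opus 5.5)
Your central probabilistic step is false. A threshold shift of order $\delta np$ is \emph{not} a constant-factor loss. The transfer bound has the form $\PP\{T\le x\}\lesssim e^{\alpha^*x}e^{-I}$, and it is sharp in the exponent: $-\log\PP_k\{T_{k\ell}\le\delta np\}=I-\alpha^*\delta np+O(\delta^2np)+O(1)$, with $\alpha^*$ bounded away from $0$. So any fixed $\delta>0$ costs a factor $e^{c\delta np}\to\infty$. You would obtain only a leading-exponent bound of the form $e^{-(1-C\delta)I}$, which is exactly what the theorem improves on.

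For a constant-factor comparison, every deterministic remainder must be $O(1)$. The spectral perturbation cannot be bounded by $O(1)$ outside an event of probability $O(e^{-I})$. Instead it must be written as $\sum_jA_{ij}\varepsilon_j$, with $\varepsilon$ independent of row $i$, and absorbed inside the change of measure. That requires $|\sum_jr_j\varepsilon_j|=O(1)$ and $\sum_jr_j\varepsilon_j^2=O(1)$ under the oracle tilt. The paper gets these from the $G$-weighted orthogonality of $(I-H_B)r$ and $(I-H_B)f$. It charges large coefficients only when row $i$ hits them, since $\ell_\infty$ control of $UU^\top\one_\ell$ is unavailable for $np=o(\log n)$.

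Your parameter bound $\max|\widehat P-P|\lesssim p/\sqrt{np}$ is also too weak. Errors in the competitor row $\widehat P_{\ell\cdot}$ enter the contrast at first order multiplied by the \emph{mean} counts. At $b_r=(n_r-\ind{r=k})P_{kr}$, the derivative $\partial T_{k\ell}/\partial P_{\ell r}$ equals $(n_r-\ind{r=k})(P_{\ell r}-P_{kr})/\{P_{\ell r}(1-P_{\ell r})\}$, which is of order $n$. Only the $\widehat P_{k\cdot}$ errors pair with $D_i-\E D_i$, so an uncancelled $O(\sqrt{np})$ shift remains. The paper needs, and proves, $\max|\widehat P-P|\lesssim n^{-1}+p\Mis(\widehat z,z)$. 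It shows that reconstructed block totals are accurate to $O(n\{1+np\Mis\})$ via the identity $\widehat Z^\top(\widehat A-A)\widehat Z=-\{(I-\Pi)G^{1/2}\widehat Z\}^\top Q_{\mathrm{tail}}\{(I-\Pi)G^{1/2}\widehat Z\}$ with $\norm{(I-\Pi)G^{1/2}Z}_F=O(\sqrt n)$. The operator norm alone does not give this.

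Your handling of dependence and iteration also does not go through. First, rerunning an arbitrary initializer on a leave-one-out graph presupposes that $\widehat z^{(0)}$ is stable under deleting vertex $i$, which the theorem does not assume. Second, a union bound is unaffordable at constant-factor precision. Over all partitions with $\Mis\le C_0/(np)$ there are $\exp\{\Theta(p^{-1}\log(np))\}$ of them, with $p^{-1}\gtrsim np$; even over $O(1)$-Hamming neighbourhoods you pay a factor $n^{O(1)}$. Third, propagating accuracy through $\E\Mis(\widehat z^{(1)},z)\lesssim R_{\mathrm{oracle}}$ does not give the pathwise bound $\Mis\le C/(np)$ needed at the next refit, since Markov loses a factor $np$. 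It also lets failure probabilities accumulate over refits and adaptive stopping.

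The paper instead stays deterministic until the last step. On the aggregate event it proves, simultaneously for all partitions, the fitted-score remainder $O(1+np\Mis)$, using $\sum_{j\in E}|\widehat A^{\mathrm c}_{ij}|\lesssim1+p|E|$ for every set $E$. It also proves an invariant basin for one update. Together these show that an error at $i$ after any number of refits implies $T_{k\ell}+\sum_{j\ne i}A_{ij}\varepsilon^{k\ell}_j(B)\le C$ with $B=A_{-i,-i}$. Only then are the initialization and full-graph events discarded, a $B$-measurable training event is kept, and the bound is conditioned on $B$.

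Finally, since $I\le Cnp$ for a fixed $C$, the aggregate event must fail with probability $O(e^{-snp})$ for an arbitrarily large fixed $s$. A rate $e^{-cnp}$ with small $c$ is not $O(R_{\mathrm{oracle}})$. The polynomial bound of Le--Levina--Vershynin is insufficient when $np\gg\log n$. This is why the paper splits at $np=A_0\log n$ and uses a Bandeira--van Handel norm bound in the upper range.
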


The failure probability in \eqref{eq:target} cannot in general be
replaced by an expected initial misclassification bound. An
$O((np)^{-1})$ expected initial error, or success with probability
$1-o(1)$, does not by itself pay for a much smaller oracle risk.
If the displayed initialization failure probability is
$O(R_{\mathrm{oracle}})$, the refined risk is of oracle order.
One likelihood update already suffices; additional specified updates
do not accumulate failure probabilities.

\begin{corollary}[Repeated $k$-means++ and spectral likelihood-ratio refinement]
\label{cor:kmpp-oracle}
Under the same model conditions, use the degree-based repeated
$k$-means++ initialization in Section~\ref{restore:algorithm} with a
sufficiently large fixed restart multiplier. Its input labels and
all subsequent updates depend only on the retained degrees and
spectral representation. After any finite positive number of the
specified likelihood updates, including almost surely finite adaptive
stopping, the output satisfies
\begin{equation}\label{eq:kmpp-oracle-risk}
 \E\Mis(\widehat z,z)\lesssim R_{\mathrm{oracle}}.
\end{equation}
\end{corollary}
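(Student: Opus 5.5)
The plan is to apply Theorem~\ref{restore:sbm-theorem} with the repeated $k$-means++ initializer. This reduces the corollary to showing that, for some fixed $C_0$,
\[
\PP\{\Mis(\widehat z^{(0)},z)>C_0/(np)\}\lesssim R_{\mathrm{oracle}}.
\]
The theorem allows same-graph initialization and auxiliary randomness, so no independence issue arises. It remains to bound this failure probability. I would split it over three events:
\begin{itemize}
\item a graph event $\mathcal G$, on which the regularized spectrum behaves well;
\item the event that $\bar D$ is comparable to $np$, so that the restart count $\lceil C\bar D\rceil\ge c\,C np$;
\item the conditional failure of all restarts given $A\in\mathcal G$.
\end{itemize}

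\emph{Graph event.} On $\mathcal G$, $\|\widetilde A-\E A\|_{\op}\lesssim\sqrt{np}$ by the regularized concentration of \citet{llv2017}, using the cap $\tau=b_0\bar D$. Davis--Kahan with $\sigma_K(\E A)\gtrsim np$ then gives $\|U-U^\ast O\|_F^2\lesssim 1/p$ times $n^{-1}$-scaled rows. After row normalization, the ideal centers are separated by a constant, and the total squared deviation is $O(1/p)$, counting all but $O(1/p)$ rows. The paper states this as the regularized spectral event. I would show $\PP(\mathcal G^c)\le e^{-c\,np}$. The high-degree rows are handled by the cap, which costs only a bounded number of bad rows per $1/p$. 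The degree event has the same Chernoff-type bound $e^{-cn^2p}$.

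\emph{Conditional failure given the graph.} On $\mathcal G$, the optimal $K$-means cost is $O(1/p)$. A run achieving cost within a fixed factor $M$ of optimal yields $O((np)^{-1})$ misclassification by the argument of \citet[Corollary~1 and Remark~8]{zhouamini2019}. Note that an empty cluster is impossible, since it would cost order $n$. By \citet{arthur2007}, $\E[\text{cost}]\le 8(\log K+2)\,\mathrm{OPT}$, so Markov's inequality gives success probability at least $1/2$ per run with $M=16(\log K+2)$. Lloyd steps only decrease the cost, and selecting the minimum-cost run preserves success if any run succeeds. Conditionally on $A$, the runs are independent. Hence the failure probability is at most $2^{-\lceil C\bar D\rceil}\le e^{-c' C np}$ on the degree event.

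\emph{Comparison with the oracle.} The main obstacle is to show $e^{-c''np}\lesssim R_{\mathrm{oracle}}$ for a suitable choice of $C$. The constant from $\mathcal G^c$ is not at our disposal, so I would need a matching lower bound on the oracle. Using Section~\ref{sec:oracle-scale}, $R_{\mathrm{oracle}}\asymp e^{-I}$, and under \eqref{eq:structural} one has $I\le C_2 np$. This holds because the Chernoff information between rows with entries in $[p/\omega,p]$ is at most a constant times $np$, with a constant depending only on $\omega,\beta,K$; the square-root prefactor is absorbed since $\log(np)\ll np$. The restart term is then handled by taking $C$ large. The graph-failure exponent is the delicate point: I would try to make $c$ in $\PP(\mathcal G^c)\le e^{-c\,np}$ large by enlarging $b_0$ and allowing $C\sqrt{np}$ deviations with a large $C$. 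Matrix concentration tails of the form $e^{-t\,np}$ for deviation $t$-multiples let the exponent grow with the deviation constant, at the price of a larger $C_0$. That price is permitted, because $C_0$ is a free fixed constant in Theorem~\ref{restore:sbm-theorem}. Combining the three bounds gives $\PP\{\Mis(\widehat z^{(0)},z)>C_0/(np)\}\lesssim R_{\mathrm{oracle}}$, and \eqref{eq:kmpp-oracle-risk} follows from \eqref{eq:target}, including the adaptive stopping case.
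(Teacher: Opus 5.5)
Your reduction to \cref{restore:sbm-theorem} matches the paper's proof. So do the conditional bound $2^{-m}$ (Arthur--Vassilvitskii, Markov, and conditional independence of the restarts given $A$), the restart count $m\gtrsim np$ on the total-degree event, and the lower bound $R_{\mathrm{oracle}}\gtrsim (np)^{-1/2}e^{-C'np}$.

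The step that does not work as you describe it is the graph-event tail. You want one bound $\PP(\mathcal G^c)\le e^{-c\,np}$ with $c$ arbitrarily large, obtained by enlarging $b_0$ and allowing $C\sqrt{np}$ deviations. Enlarging $b_0$ does not affect the exponent. The regularized concentration you invoke (Le--Levina--Vershynin) gives deviation $Cr^{3/2}\sqrt{np}$ only with failure probability $n^{-r}$. That is at most $e^{-s\,np}$ only when $np\le (r/s)\log n$. So when $np/\log n\to\infty$, a fixed deviation constant cannot give an arbitrary exponential rate from this inequality. You cite no $e^{-t\,np}$ tail for the capped matrix that holds uniformly over all sparsity levels, and the unregularized matrix does not concentrate at scale $\sqrt{np}$ when $np\ll\log n$.

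The paper closes this with a split at $np=A_0\log n$.
\begin{itemize}
\item Below the split, the polynomially accurate event of \cref{prop:sparse-floor} already suffices, because $n^{-r}\le e^{-(r/A_0)np}$. Taking $r>A_0(C'+1)+2$ makes the geometry failure $O(R_{\mathrm{oracle}})$.
\item Above the split, \cref{restore:high-aggregate} uses the unregularized bounded-entry bound $\PP\{\norm{A-ZPZ^\top}_{\op}>L\sqrt{np}\}\le n\,e^{-cL^2np}$. Its prefactor $n$ is absorbed since $\log n\le np/A_0$. The capped geometry then follows deterministically, since $G=\diag\{\max(D_i,\tau)\}\ge\tau\asymp np$ gives $\norm{G^{-1/2}(A-ZPZ^\top)G^{-1/2}}_{\op}\le C/\sqrt{np}$ directly.
\end{itemize}
With this two-range event in place of your single claimed tail, the rest of your argument goes through unchanged, including the enlargement of $C_0$ and its use in \cref{restore:sbm-theorem}.
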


The initializer supplies a coarse $O((np)^{-1})$ accuracy; likelihood
refinement supplies the sharp risk comparison. Both results permit
$np=\log\log n$ and arbitrarily slower divergence. They do not impose
$np\ge c\log n$, a planted partition, or equal expected degrees.
They retain the probability-comparability and signal conditions in
\eqref{eq:structural}, as well as the upper bound on $np^2$.
Increasing the number of seeding runs is a computational requirement,
not a lower density assumption.
The risk ratio is bounded, not asserted to tend to one. The corollary
implies exact recovery whenever $nR_{\mathrm{oracle}}\to0$.
For a general initializer, the corresponding sufficient condition is
that $n$ times the entire right side of \eqref{eq:target} tends to zero.

\subsection{The sharp oracle scale and its interpretation}\label{sec:oracle-scale}

For the community-level contrast $T_{k\ell}$, define the adjusted
Chernoff information using the full community sizes:
\[
 I_{k\ell}=-\min_{0\le\alpha\le1}\sum_{r=1}^K n_{r}
 \log\!\left[P_{kr}^{1-\alpha}P_{\ell r}^\alpha
 +(1-P_{kr})^{1-\alpha}(1-P_{\ell r})^\alpha\right]
 +\frac12\log(np).
\]
Put $I=\min_{k\ne\ell}I_{k\ell}$. Following the sharp testing scale
in \citet{zhouli}, the added logarithm includes the square-root
prefactor in the exponent. We use $n_{r}$ rather than
$n_{r}-\ind{r=k}$ only in this information expression;
$T_{k\ell}$ and the oracle risk in \eqref{eq:oracle-risk} remain exact.
Adding the single missing Bernoulli factor increases the exact adjusted
Chernoff information by an amount between zero and
$-\log(1-p)=O(p)$, as shown in Appendix~\ref{sec:sharp}.
Thus $e^{-I_{k\ell}}$ is a risk scale, not an exact probability, and
this simplification changes that scale by at most a fixed factor.
The full-size expression satisfies $I_{k\ell}=I_{\ell k}$ by
replacing $\alpha$ with $1-\alpha$; the exact directional risks need
not be equal.

Under the fixed structural assumptions and $np\to\infty$,
Appendix~\ref{sec:sharp} proves, uniformly over $k\ne\ell$ and
bounded $x$,
\[
 \PP_k(T_{k\ell}<x)\asymp
 \PP_k(T_{k\ell}\le x)\asymp e^{-I_{k\ell}}.
\]
A strict defeat by any competitor forces an oracle error, and an
oracle error entails a non-strict defeat by some competitor. For
fixed $K$, \eqref{eq:oracle-risk} therefore gives
\begin{equation}\label{eq:oracle-scale-comparison}
 \begin{aligned}
 R_{\mathrm{oracle}}
 &\asymp\sum_{k=1}^K\frac{n_k}{n}
              \sum_{\ell\ne k}\PP_k(T_{k\ell}\le0)\\
 &\asymp\sum_{k=1}^K\frac{n_k}{n}
              \sum_{\ell\ne k}e^{-I_{k\ell}}
 \asymp e^{-I}.
 \end{aligned}
\end{equation}
For the final comparison, a pair attaining $I$ has weight at least
$1/(\beta K)$, whereas the total weight of all pairs is $K-1$.
No equality of the pairwise exponents, and no independence of the
competing contrasts, is needed. If the initialization failure term in
\eqref{eq:target} is $O(R_{\mathrm{oracle}})$, as in
\cref{cor:kmpp-oracle}, the conclusion is
\[
 \E\Mis(\widehat z,z)\lesssim R_{\mathrm{oracle}}\asymp e^{-I}.
\]
Here $I$ is a sharp error exponent, not the unmodified Chernoff
information or an exact negative log risk. The proof always tilts the
actual self-excluded row law. Its exact exponent can be replaced by
$I_{k\ell}$ in constant-factor bounds, but not in exact
change-of-measure identities without the one-coordinate correction.

A minimax lower bound is a statement about the worst-case risk over
a specified parameter class, as in
\citep{zhangzhou2016,zhouli,zhouamini2020}. Where such a lower bound
matches this sharp scale on the same class, a uniform oracle upper
bound yields minimax-rate optimality. That reasoning does not give
a pointwise lower bound for this estimator at every parameter value.
Accordingly, the theorem retains the upper inequality for the
algorithmic risk; the two-sided comparison above is for the oracle.

\subsection{Outline of the proof}\label{sec:proof-strategy}

\paragraph{Initialization is a separate input.}
The refinement theorem requires $\Mis(\widehat z^{(0)},z)\le C_0/(np)$
only on the event where initialization succeeds, and charges its
complement explicitly. Aggregate spectral estimates provide a
normalized-coordinate squared error of order $1/p$. Any fixed-factor
$k$-means approximation then provides the required initial accuracy.
For $k$-means++, independent restarts reduce its conditional failure
probability to $2^{-m}$ after $m$ runs. The degree-based restart rule
makes this term small enough relative to the oracle; the argument is
proved in Appendix~\ref{restore:fit} and completed in
Appendix~\ref{sec:probability}.

\paragraph{Uniform fitted-score control before conditioning.}
The initial labels and fitted probabilities depend on $A$, so they
cannot first be held fixed in an independent-edge calculation.
Instead, on one aggregate graph event, the following estimates hold
simultaneously for every sufficiently accurate partition:
\[
 \max_{k,\ell}|\widehat P_{k\ell}-P_{k\ell}|
 \lesssim n^{-1}+p\Mis(\widehat z,z),
\]
and the fitted contrast differs from the true-parameter contrast on
the same reconstructed row by $O(1+np\Mis(\widehat z,z))$.
The key deterministic estimate controls
$\sum_{j\in E}|\widehat A^{\mathrm c}_{ij}|\lesssim 1+p|E|$
for every vertex set $E$, including an error set selected using the
whole graph. There is no union bound over partitions and no
independence assumption on the error set. Successful initialization
and the invariant error bound make the score remainder uniformly
bounded over all refits.

\paragraph{Leave one out only after removing the fitted quantities.}
For a tested vertex $i$, recompute the degrees, cap threshold, weights,
and eigenspace from $B=A_{-i,-i}$. The local comparison in
Appendix~\ref{restore:loo} converts the true-parameter reconstructed
contrast into the actual raw-row oracle contrast plus a perturbation
whose coefficients are measurable from $B$. The preceding deterministic
comparison first bounds the error event on initialization success and
the full-graph local events. These events are then dropped; only the
training event determined by $B$ is retained before conditioning on
$B$. This order, made explicit in Appendix~\ref{sec:probability},
restores incident-edge independence without asserting independence
conditional on initial labels or initialization success. No stability
of the initializer under deleting a vertex is required, and no
leave-one-out algorithm is run in practice.

\paragraph{Sharp risk and event costs.}
At the oracle tilt, projection identities bound the mean and quadratic
mass of the coefficient error. Truncated spectral-row estimates charge
large coefficients only when the tested row encounters them. The sharp
comparison preserves the prefactor included in $I$.
The polynomial and exponential aggregate bounds in
Appendices~\ref{sec:sparse-extension} and~\ref{sec:high-range} cover
complementary degree ranges; this proof split is not a density
restriction. Averaging the local testing bounds pays all graph and
local exceptions at oracle order, and adds the initialization failure
probability just once. No union over iteration counts is used.

All seven proof appendices remain part of this ordinary-SBM article.
No degree-corrected-SBM extension or companion note is needed for its
main theorem.

\section{Experiments}\label{sec:experiments}

We examine the asymptotic comparison in the theorem by increasing the
number of vertices while keeping the block-probability profiles and
community proportions fixed. The quantity of interest is the average
misclassification proportion divided by $R_{\mathrm{oracle}}$, rather
than the number of errors in a single graph. We use the $k$-means++
initializer of Section~\ref{sec:initialization} followed by the
spectral likelihood-ratio refinement of Section~\ref{restore:algorithm}.

\subsection{Model sequences and implementation}

The graph sizes are
\[
 n=2000,\ 4000,\ 8000,\ 16000,\ 32000,\ 64000.
\]
For each size and each of the following three sequences, we use
100 independent Bernoulli graphs, giving 1800 graphs in total.
The balanced two-community model has
\[
 P=p\begin{pmatrix}1&0.25\\0.25&1\end{pmatrix},
 \qquad np=4\log n.
\]
The three-community model has proportions $(0.40,0.35,0.25)$ and
\[
 P=p\begin{pmatrix}
 1&0.10&0.15\\
 0.10&0.90&0.12\\
 0.15&0.12&0.80
 \end{pmatrix},\qquad np=6\log n.
\]
Its community sizes and expected degrees are unequal. The third sequence
uses the balanced two-community matrix with $np=6\sqrt{\log n}$.
All profiles and proportions remain fixed, and all three sequences
satisfy $np\to\infty$ and $np^2\to0$. The third degree scale grows
more slowly than $\log n$ asymptotically, although it still exceeds
$\log n$ on the finite grid above.

We fix $b_0=3$ and $c_0=0.01$ at every size. The encoder retains the
$K$ eigenpairs of largest absolute eigenvalue, with relative eigensolver
tolerance $10^{-10}$. On the normalized rows of $U$, initialization uses
the original $D^2$ rule: a uniform first center, followed by sampling
proportional to squared distance from the existing centers. There are
$\max\{1,\lceil2\bar D\rceil\}$ independent starts, each allowing at
most 50 nonincreasing Lloyd steps. An empty cluster keeps its previous
center. We select the smallest within-cluster squared error, with the
first start winning a tie. Numerically zero rows, with norm at most
$10^{-14}$, receive the fixed unit direction. The numerical multiplier
two is held fixed; it is not asserted to be the sufficient constant
for the whole parameter class in \cref{cor:kmpp-oracle}.

After initialization we allow at most ten complete parameter-fit and
simultaneous LR updates, stopping earlier when the labels are unchanged.
The decoder receives only the initial partition, original degrees,
eigenvectors and eigenvalues. It uses neither adjacency entries nor
true labels or probabilities. All graph outcomes enter the averages;
restart selection and stopping do not use the observed error or oracle.

\paragraph{Oracle risk and Monte Carlo uncertainty.}
For each model and size, we compute $R_{\mathrm{oracle}}$ by summing
the independent block-binomial probabilities over the error event in
\eqref{eq:oracle-risk}. The calculation includes the full nonedge
term, the $n_k-1$ own-community trials and the fixed tie rule. The
discarded binomial tail mass is below $10^{-13}$. Thus the denominator
is the specified Bernoulli oracle risk, not $e^{-I}$ or an observed
error fraction from a single graph.

The numerator is the mean misclassification proportion across the 100
graphs. We divide that mean and its Student-$t$ 95\% Monte Carlo
interval by the fixed oracle denominator. The independent replicates
are graphs, not individual vertices; the intervals are pointwise rather
than simultaneous. No multiplicative factor is fitted to align the
algorithm and oracle curves.

\subsection{Risk ratios as the graph size increases}

Table~\ref{tab:experiment-risks} reports the two risks and their ratio,
and Figure~\ref{fig:experiment-ratios} displays the ratios over the
32-fold increase in graph size. The ranges are 2.74--2.96, 2.15--2.48, and 3.16--3.29 in the three sequences, respectively.
The ratios remain on a constant scale and do not exhibit growth with
$n$ on these three sequences. This is numerical evidence consistent
with the asymptotic comparison; the ratios are not asserted to tend
to one.

For the balanced logarithmic-degree sequence, increasing $n$ from
2000 to 64000 reduces the algorithmic risk from $5.600\times10^{-3}$ to
$8.202\times10^{-4}$, while the oracle risk decreases from $1.893\times10^{-3}$ to
$2.992\times10^{-4}$. The corresponding ratio changes from 2.96 to
2.74. For the unequal three-community sequence, the two
risks decrease from $3.050\times10^{-3}$ and $1.228\times10^{-3}$ to
$2.642\times10^{-4}$ and $1.231\times10^{-4}$, respectively. Thus the observed
constant-scale ratios compare two decreasing risks.

The three-community setting also illustrates the effect of refinement.
At $n=64000$, mean misclassification is 0.1125\% after
initialization, 0.02648\% after one LR update, and
0.02642\% at the final output. Most of the improvement occurs
in the first update. In the balanced two-community settings,
initialization and final risks are much closer, so the evidence does
not imply a large refinement gain in every model.

\begin{table}[htbp]
\centering\small
\caption{Risk estimates for $k$-means++ followed by spectral LR, using all 100 independent graphs in each model-size cell. The denominator is the numerically evaluated Bernoulli oracle risk. Intervals quantify graph-level Monte Carlo uncertainty.}\label{tab:experiment-risks}
\begin{tabular}{rcccc}
\toprule
$n$ & $R_{\mathrm{oracle}}$ & Mean $\Mis$ & Risk ratio & 95\% interval\\
\midrule
\multicolumn{5}{l}{\textit{Balanced $K=2$, $np=4\log n$}}\\[2pt]
2,000 & $1.893\times10^{-3}$ & $5.600\times10^{-3}$ & 2.96 & [2.78, 3.14]\\
4,000 & $1.315\times10^{-3}$ & $3.805\times10^{-3}$ & 2.89 & [2.75, 3.04]\\
8,000 & $9.094\times10^{-4}$ & $2.654\times10^{-3}$ & 2.92 & [2.78, 3.05]\\
16,000 & $6.280\times10^{-4}$ & $1.806\times10^{-3}$ & 2.88 & [2.78, 2.97]\\
32,000 & $4.334\times10^{-4}$ & $1.230\times10^{-3}$ & 2.84 & [2.74, 2.94]\\
64,000 & $2.992\times10^{-4}$ & $8.202\times10^{-4}$ & 2.74 & [2.67, 2.82]\\
\midrule
\multicolumn{5}{l}{\textit{Unequal-size $K=3$, $np=6\log n$}}\\[2pt]
2,000 & $1.228\times10^{-3}$ & $3.050\times10^{-3}$ & 2.48 & [2.31, 2.66]\\
4,000 & $7.981\times10^{-4}$ & $1.735\times10^{-3}$ & 2.17 & [2.01, 2.34]\\
8,000 & $4.913\times10^{-4}$ & $1.076\times10^{-3}$ & 2.19 & [2.04, 2.34]\\
16,000 & $3.119\times10^{-4}$ & $6.706\times10^{-4}$ & 2.15 & [2.02, 2.28]\\
32,000 & $1.977\times10^{-4}$ & $4.259\times10^{-4}$ & 2.15 & [2.04, 2.27]\\
64,000 & $1.231\times10^{-4}$ & $2.642\times10^{-4}$ & 2.15 & [2.05, 2.24]\\
\midrule
\multicolumn{5}{l}{\textit{Balanced $K=2$, $np=6\sqrt{\log n}$}}\\[2pt]
2,000 & $1.349\times10^{-2}$ & $4.419\times10^{-2}$ & 3.28 & [3.19, 3.36]\\
4,000 & $1.219\times10^{-2}$ & $4.017\times10^{-2}$ & 3.29 & [3.24, 3.35]\\
8,000 & $1.105\times10^{-2}$ & $3.608\times10^{-2}$ & 3.26 & [3.22, 3.31]\\
16,000 & $1.005\times10^{-2}$ & $3.246\times10^{-2}$ & 3.23 & [3.20, 3.26]\\
32,000 & $9.160\times10^{-3}$ & $2.922\times10^{-2}$ & 3.19 & [3.17, 3.21]\\
64,000 & $8.377\times10^{-3}$ & $2.650\times10^{-2}$ & 3.16 & [3.15, 3.18]\\
\bottomrule\end{tabular}\end{table}

\begin{figure}[htbp]
\centering
\resizebox{.94\linewidth}{.563774290658\linewidth}{%
\begin{tikzpicture}
\begin{axis}[
 width=465pt,height=278pt,
 xmode=log,log basis x=2,
 xmin=1680,xmax=76000,ymin=.8,ymax=3.8,
 xtick={2000,4000,8000,16000,32000,64000},
 xticklabels={{2,000},{4,000},{8,000},{16,000},{32,000},{64,000}},
 ytick={1,1.5,2,2.5,3,3.5},
 minor x tick num=0,minor y tick num=0,
 tick align=outside,
 tick label style={font=\sffamily\small},
 xlabel={Number of vertices},
 ylabel={Mean misclassification / oracle risk},
 title={$k$-means++ followed by spectral LR},
 label style={font=\sffamily\small},
 title style={font=\sffamily\normalsize},
 ymajorgrids=true,
 legend style={at={(.98,.025)},anchor=south east,
   font=\sffamily\footnotesize,draw=none,row sep=2pt,cells={anchor=west}},
 legend cell align=left,
]
\addplot[dashed,line width=.6pt,domain=1680:76000,samples=2] {1};
\addlegendentry{Oracle reference}
\addplot+[mark=*,mark size=2.5pt,line width=1pt,
 error bars/.cd,y dir=both,y explicit,
 error bar style={line width=.8pt},error mark options={rotate=90,mark size=3pt}]
 table[x=n,y=ratio,y error plus=plus,y error minus=minus,row sep=\\] {
n ratio plus minus\\
2000 2.957923085372194 0.1784478650932844 0.1784478650932848\\
4000 2.894581475155128 0.1479211016584601 0.1479211016584614\\
8000 2.918209296118861 0.1339457042129668 0.1339457042129659\\
16000 2.875202965159536 0.09827298749209001 0.09827298749209046\\
32000 2.837761443019752 0.1003055144982277 0.1003055144982277\\
64000 2.741194233484076 0.07499093295017767 0.07499093295017856\\
};
\addlegendentry{Balanced $K=2$, $np=4\log n$}
\addplot+[mark=*,mark size=2.5pt,line width=1pt,
 error bars/.cd,y dir=both,y explicit,
 error bar style={line width=.8pt},error mark options={rotate=90,mark size=3pt}]
 table[x=n,y=ratio,y error plus=plus,y error minus=minus,row sep=\\] {
n ratio plus minus\\
2000 2.484440284840352 0.1785023143604096 0.1785023143604096\\
4000 2.173983769518058 0.1645263450723138 0.1645263450723133\\
8000 2.190569905211302 0.1543767047386577 0.1543767047386577\\
16000 2.149809650986965 0.1293279486671044 0.129327948667104\\
32000 2.15494834055247 0.1189494165875415 0.118949416587542\\
64000 2.146539554428367 0.09342985831017669 0.09342985831017625\\
};
\addlegendentry{Unequal-size $K=3$, $np=6\log n$}
\addplot+[mark=*,mark size=2.5pt,line width=1pt,
 error bars/.cd,y dir=both,y explicit,
 error bar style={line width=.8pt},error mark options={rotate=90,mark size=3pt}]
 table[x=n,y=ratio,y error plus=plus,y error minus=minus,row sep=\\] {
n ratio plus minus\\
2000 3.275818041372713 0.08353761598019949 0.08353761598019904\\
4000 3.294032278027191 0.05867995167071305 0.0586799516707126\\
8000 3.264978931302178 0.04530824582692627 0.04530824582692583\\
16000 3.231349786566815 0.03093595151659612 0.03093595151659612\\
32000 3.18991599172248 0.02344962237046966 0.02344962237046921\\
64000 3.163018039933997 0.0169313527560635 0.01693135275606394\\
};
\addlegendentry{Balanced $K=2$, $np=6\sqrt{\log n}$}
\end{axis}
\end{tikzpicture}%
}
\caption{Risk ratios as $n$ increases from 2000 to 64000. Each point includes 100 independent graphs, with pointwise 95\% Monte Carlo intervals. The oracle reference is one, and the curves have not been rescaled.}\label{fig:experiment-ratios}
\end{figure}

\subsection{Numerical checks and reproducibility}

All 1800 runs reach unchanged labels within 7 LR rounds,
without invoking the fallback or activating the probability bounds.
The degree-based restart rule uses 21--56 starts
per graph. Across the primary grid there are 92 isolated
vertices, all handled by the stated rule, and only 1 vertex is
subject to the high-degree cap. These samples therefore primarily
test the degree-preserving decoder rather than frequent cap activation.

The initializer is checked against a separate literal implementation
on 80 small point sets, including zero rows, duplicates and ties;
720 additional checks verify selection by minimum squared error.
The LR calculations are checked against direct dense formulas in
60 cases, including active caps, negative retained eigenvalues and
isolated vertices. Counts, probability fits, degree sums and scores
agree within $7.2\times10^{-15}$. Exhaustive small-row enumeration
also checks the block-binomial oracle calculation. The largest relative
eigen-equation residual on the primary grid is $4.640\times10^{-11}$.

The 1500 primary records at $n=2000$ through $32000$ are retained
from the existing $k$-means++ simulations; 300 new graphs extend the
same protocol to $n=64000$. Algorithm constants, seed rules, model
profiles and stopping rules are unchanged. The accompanying archive
contains the source code, protocols, graph and initialization seeds,
all graph-level outcomes, per-start objectives and oracle calculations.
The earlier smaller-size and weak-signal sensitivity runs remain
archived separately; the present comparison focuses on the six
increasing sizes above.

\section*{Acknowledgements}

The central idea of spectral likelihood-ratio clustering and the formulation
of the research problem originated with the author. The detailed proof
arguments in Appendices~\ref{sec:sharp}--\ref{sec:probability} and the
experimental code underlying Section~\ref{sec:experiments} were developed
primarily using OpenAI's GPT-6.0 and GPT-6.1 through iterative interactions
directed by the author. The models also assisted with algorithmic details,
debugging, and manuscript drafting and revision. Their contribution was
substantive and was not limited to language editing.

The author provided mathematical guidance and critical feedback, directed
the revisions, and made the final decisions about the work. The author
assumes full responsibility for the paper's content, including the
mathematical claims, code, numerical results, and any remaining errors.

\clearpage

\appendix
\section{A sharp Bernoulli likelihood transfer}\label{sec:sharp}

We give the probability step explicitly to distinguish a risk factor from
an exponential-rate factor. Sharp Chernoff comparisons in community
detection are also developed by \citet{zhouli}.
The argument below allows a perturbation measurable from independent
training data.

\begin{lemma}[Bounded tilted error preserves the sharp prefactor]
\label{lem:sharp}
Let $X_j$ be independent Bernoulli variables with probabilities $p_j$.
Let $T$ be an affine function of $(X_j)_j$, with linear coefficients
$w_j$, and put $\psi(\alpha)=\log\E e^{-\alpha T}$.
Suppose $\abs{w_j}\le C$, the minimizer $\alpha^*$ satisfies
$c\le \alpha^*\le1-c$ for a fixed $c>0$, and under the probability
measure $\mathbb{Q}$ proportional to $e^{-\alpha^*T}$,
\begin{equation}\label{eq:sharp-premises}
 \E_{\mathbb{Q}} T=0,\quad c{np}\le\operatorname{Var}_{\mathbb{Q}}(T)\le C{np},
 \quad\sum_jr_j\le C{np},
\end{equation}
where $r_j=\mathbb{Q}(X_j=1)$. Define
\[
 I=-\psi(\alpha^*)+\frac12\log(np).
\]
Within this lemma, $I$ is the exact transform-based exponent of $T$.
The added logarithm incorporates the square-root prefactor; hence
$I-\tfrac12\log(np)$ is the negative log transform at the optimizing
tilt. Its comparison with the full-size $I_{k\ell}$ of
Section~\ref{sec:oracle-scale} is made in the application below.
For every fixed $L<\infty$ and all sufficiently large ${np}$, uniformly over
$|x|\le L$,
\begin{equation}\label{eq:sharp-oracle}
 e^{-I}\lesssim\PP(T<x)
 \le\PP(T\le x)\lesssim e^{-I}.
\end{equation}
The implicit constants may depend on $L$ and the fixed constants in
the oracle hypotheses, but not on $n$ or on $x$ within this interval.

Let $\mathcal W$ be training data independent of $(X_j)_j$ and let
\[
 \widehat T=T+\varepsilon_0+\sum_j\varepsilon_jX_j
\]
have coefficients measurable with respect to $\mathcal W$. On a measurable
training event suppose
\begin{equation}\label{eq:tilt-error}
 \max_j|\varepsilon_j|\le R,\qquad
 \left|\varepsilon_0+\sum_jr_j\varepsilon_j\right|\le B_1,\qquad
 \sum_jr_j\varepsilon_j^2\le B_2.
\end{equation}
For fixed $R,B_2$ and every real $x$,
\begin{equation}\label{eq:sharp-transfer}
 \PP(\widehat T\le x\mid\mathcal W)
 \lesssim\exp\{\alpha^*x+B_1+e^R B_2/2\}\,e^{-I}.
\end{equation}
The implicit factor may depend on the fixed oracle bounds and on
$R,B_2$, but is uniform in $x,B_1,n$ and over the training event.
In particular $B_1$ can
be a nonnegative training-measurable random bound, provided its indicated
exponential moment is integrated when removing the conditioning.
\end{lemma}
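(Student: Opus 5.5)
Both parts rest on the exponential change of measure to $\mathbb Q$. Since $\psi$ is convex, $\alpha^*$ is an interior minimizer, and $\E_{\mathbb Q}T=0$ is the first-order condition $\psi'(\alpha^*)=0$. For any event $E$,
\[
\PP(E)=e^{\psi(\alpha^*)}\E_{\mathbb Q}\bigl[e^{\alpha^*T}\mathbf 1_E\bigr]
=e^{-I}\sqrt{np}\,\E_{\mathbb Q}\bigl[e^{\alpha^*T}\mathbf 1_E\bigr].
\]
Under $\mathbb Q$ the $X_j$ remain independent Bernoulli variables, now with means $r_j$, and $T$ is centered with variance $\sigma^2\asymp np$. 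So \eqref{eq:sharp-oracle} reduces to showing $\E_{\mathbb Q}[e^{\alpha^*T}\mathbf 1\{T\le x\}]\asymp (np)^{-1/2}$, with the lower bound holding also for the strict event $\{T<x\}$.

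For the upper bound I would decompose $(-\infty,x]$ into unit intervals $[x-m-1,x-m]$. On the $m$-th interval $e^{\alpha^*T}\le e^{\alpha^*(x-m)}$. A concentration (small-ball) bound then gives $\mathbb Q(T\in[y,y+1])\lesssim 1/\sigma$ uniformly in $y$. I would get this from a Berry--Esseen/Kolmogorov--Rogozin-type estimate for sums of independent bounded variables, using $|w_j|\le C$ and the variance bound; the interval length is $O(1)$, so lattice effects cost only a constant. Summing the geometric series gives $O(e^{\alpha^*L}/\sqrt{np})$.

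For the lower bound I would restrict to $T\in[x-M,x)$ for a fixed $M$. There $e^{\alpha^*T}\ge e^{\alpha^*(x-M)}$. I then need an anti-concentration statement $\mathbb Q(T\in[x-M,x))\gtrsim 1/\sigma$ for $|x|\le L$. I would obtain it from a local limit bound, again via Berry--Esseen, whose error is $O(\sum_j r_j|w_j|^3/\sigma^3)=O(\sigma^{-1})$ by $\sum_j r_j\le Cnp$. The trouble is that this error is of the same order as the main term, so Berry--Esseen alone is too weak.

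This lower bound is the main obstacle. My first attempt is to choose $M$ large: the Gaussian mass of an interval of length $M$ is $\asymp M/\sigma$, while the Berry--Esseen error is $C'/\sigma$ with $C'$ independent of $M$. Taking $M>2C'/c$ makes the difference positive. The upper bound needs no such care.

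For the transfer bound \eqref{eq:sharp-transfer}, I would condition on $\mathcal W$, so that the $\varepsilon$'s are constants and $X$ is still independent of them. Apply Markov's inequality at the tilt $\alpha^*$ and factor through $\mathbb Q$:
\[
\PP(\widehat T\le x\mid\mathcal W)\le e^{\alpha^*x}\,\E e^{-\alpha^*\widehat T}
=e^{\alpha^*x}e^{\psi(\alpha^*)}e^{-\alpha^*\varepsilon_0}\prod_j\E_{\mathbb Q}e^{-\alpha^*\varepsilon_jX_j}.
\]
Plain Markov loses the $\sqrt{np}$ prefactor. To recover it, I would instead write
\[
\PP(\widehat T\le x)=e^{-I}\sqrt{np}\,\E_{\mathbb Q}\bigl[e^{\alpha^*T}\mathbf 1\{\widehat T\le x\}\bigr].
\]
A second tilt to $\mathbb Q'\propto e^{-\alpha^*\sum_j\varepsilon_jX_j}\,d\mathbb Q$ keeps independence, changes each mean by $O(r_j|\varepsilon_j|e^R)$, and keeps the variance comparable to $np$ because $\sum_j r_j\varepsilon_j^2\le B_2$ and $|\varepsilon_j|\le R$.

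The normalizer of this second tilt is
\[
\prod_j\bigl(1+r_j(e^{-\alpha^*\varepsilon_j}-1)\bigr)\le\exp\Bigl\{-\alpha^*\sum_j r_j\varepsilon_j+\tfrac{e^R}{2}\sum_j r_j\varepsilon_j^2\Bigr\},
\]
using $e^{-u}\le 1-u+u^2e^R/2$ for $|u|\le R$. Combined with the $e^{-\alpha^*\varepsilon_0}$ factor, this gives the exponent $B_1+e^RB_2/2$, since $\alpha^*\le1$. Under $\mathbb Q'$ the variable $e^{\alpha^*\widehat T}\mathbf 1\{\widehat T\le x\}$ has expectation $\lesssim e^{\alpha^*x}/\sqrt{np}$, by the same unit-interval decomposition and small-ball bound as before. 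That bound depends only on the $w_j+\varepsilon_j$ being bounded and on the variance lower bound, not on centering, so it is uniform in $B_1$ and $x$.
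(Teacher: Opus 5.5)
Your proposal is correct and follows essentially the same route as the paper. You use the exact change of measure to $\mathbb Q$, a Berry--Esseen unit-interval small-ball bound summed geometrically for the upper tail, and a fixed long interval whose Gaussian mass beats twice the Berry--Esseen error for the lower tail; for the transfer, your second tilt $\mathbb Q'$ is exactly the paper's $\widehat{\mathbb Q}$ (the original law tilted by $e^{-\alpha^*\widehat T}$), combined with the same Bernoulli exponential-moment bound and the same variance comparison $\bigl(\sum_j r_j(1-r_j)(w_j+\varepsilon_j)^2\bigr)^{1/2}\ge\sqrt{c\,np}-\sqrt{B_2}$.
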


\begin{proof}
Under $\mathbb{Q}$, the coordinates remain independent Bernoulli variables,
\[
 r_j=\frac{p_je^{-\alpha^*w_j}}{1-p_j+p_je^{-\alpha^*w_j}}.
\]
The sum of their centered weighted third absolute moments is at most
$C\sum_jr_j=O({np})$. Berry--Esseen and
\eqref{eq:sharp-premises} therefore show that every interval of length one
has $\mathbb{Q}$ probability at most $C/\sqrt{np}$, uniformly in
its location. The exact change of measure is
\[
 \frac{\mathrm{d}\PP}{\mathrm{d}\mathbb{Q}}
 =\exp\{\psi(\alpha^*)+\alpha^*T\}
 =\sqrt{np}\,e^{-I+\alpha^*T}.
\]
Partitioning $(-\infty,x]$ into unit intervals cancels the
$\sqrt{np}$ factor against the interval-probability bound and gives
the upper bound in \eqref{eq:sharp-oracle}.

For the lower bound choose a fixed interval $(x-L_0,x)$, with $L_0$
sufficiently large. Its Gaussian probability under a centered normal
variable of variance in $[c{np},C{np}]$ is at least $cL_0/\sqrt {np}$ for large
${np}$, uniformly over bounded $x$. Choose $L_0$ so this exceeds twice the
Berry--Esseen error. The $\mathbb{Q}$ probability is then at least $c'/\sqrt{np}$.
On this interval $e^{\alpha^*T}$ has a fixed positive lower bound,
so the same change-of-measure identity gives $c_Le^{-I}$.
Using strict endpoints also handles arbitrary lattice atoms and tie rules.

For the perturbation, independence of training and the row allows us to
regard $\varepsilon_0,\varepsilon$ as fixed. The elementary Bernoulli exponential-moment bound
gives
\begin{align*}
 \log\E e^{-\alpha^*\widehat T}
 &= -I+\tfrac12\log(np)-\alpha^*\varepsilon_0+\log\E_{\mathbb{Q}}e^{-\alpha^*\sum_j\varepsilon_jX_j}\\
 &\le -I+\tfrac12\log(np)-\alpha^*\left(\varepsilon_0+\sum_jr_j\varepsilon_j\right)
       +\frac{(\alpha^*)^2e^{\alpha^*R}}2\sum_jr_j\varepsilon_j^2\\
 &\le -I+\tfrac12\log(np)+B_1+e^R B_2/2.
\end{align*}
Now tilt the original law by $e^{-\alpha^*\widehat T}$.
Write $\widehat r_j=\widehat{\mathbb Q}(X_j=1)$.
Under this new measure $\widehat{\mathbb{Q}}$ the coordinates remain independent and
their odds are those under $\mathbb{Q}$ multiplied by $e^{-\alpha^*\varepsilon_j}$.
Consequently their probabilities and variances are within fixed factors
of those under $\mathbb{Q}$. Furthermore,
\[
 \left(\sum_jr_j(1-r_j)(w_j+\varepsilon_j)^2\right)^{1/2}
 \ge \sqrt{c{np}}-\sqrt{B_2}.
\]
For large ${np}$, the variance of $\widehat T$ under $\widehat{\mathbb{Q}}$ is therefore
between positive constant multiples of ${np}$. Its centered third absolute
moments sum to $O({np})$ because $w_j+\varepsilon_j$ are uniformly bounded and
$\sum_j\widehat r_j=O({np})$. Berry--Esseen bounds every unit interval under
$\widehat{\mathbb{Q}}$ by $C/\sqrt {np}$, irrespective of its mean. Multiplication by the displayed exponential-moment bound
cancels the $1/\sqrt{np}$ interval factor. The same interval summation
therefore proves \eqref{eq:sharp-transfer}.
\end{proof}

\subsection{Application to the actual Bernoulli SBM oracle}

Fix a tested vertex with true label $k$ and realize the count vector
from Section~\ref{sec:statements} using its actual incident edges.
For this fixed target, $T_{k\ell}$ is therefore exactly its raw-row
contrast, not an independent copy. Its distribution depends only on
$k,\ell$, the community sizes and $P$. The vertex is still indexed by
$i$ wherever its deleted graph or projection is used.
For the projection arguments the linear coefficient vector is
\begin{equation}\label{eq:sbm-weights}
 (w_{k\ell})_j
   =\log\frac{P_{kz_j}(1-P_{\ell z_j})}
                    {P_{\ell z_j}(1-P_{kz_j})},\qquad j\in[n].
\end{equation}
Thus, for the chosen target,
\[
 T_{k\ell}=\sum_{j\ne i}A_{ij}(w_{k\ell})_j
       +\sum_{j\ne i}\log\frac{1-P_{kz_j}}{1-P_{\ell z_j}}.
\]
The nonedge term is retained, and $w_{k\ell,-i}$ means that coordinate
$i$ is deleted. The independent binomial counts give the exact transform
\begin{equation}\label{eq:sbm-information}
 \log\E_k e^{-\alpha T_{k\ell}}
 =\sum_{r=1}^K(n_{r}-\ind{r=k})\,
 \log\!\left[P_{kr}^{1-\alpha}P_{\ell r}^\alpha
 +(1-P_{kr})^{1-\alpha}(1-P_{\ell r})^\alpha\right].
\end{equation}
The self-loop contributes nothing. To compare this exact transform
with the full-size expression in Section~\ref{sec:oracle-scale}, note
that the additional affinity factor obeys, for $0\le\alpha\le1$,
\[
 1-p\le P_{kk}^{1-\alpha}P_{\ell k}^{\alpha}
 +(1-P_{kk})^{1-\alpha}(1-P_{\ell k})^{\alpha}\le1.
\]
The lower bound follows from the nonedge term, and the upper bound
from H\"older's inequality. Taking negative logarithms and maximizing
shows that
\[
 0\le I_{k\ell}
       +\min_{0\le\alpha\le1}\log\E_k e^{-\alpha T_{k\ell}}
       -\tfrac12\log(np)\le-\log(1-p).
\]
Thus the exact transform-based exponent in \cref{lem:sharp} is
$I_{k\ell}+O(p)$, and its exponential lies between $e^{-I_{k\ell}}$
and $(1-p)^{-1}e^{-I_{k\ell}}$. Under $np^2\le C_1$, eventually
$p\le1/2$, so this additional multiplier is uniformly at most two.
Every optimizing tilt and change-of-measure identity in the lemma
still uses the exact transform \eqref{eq:sbm-information}, not a
fictitious observed self-loop. The optimizer of the full-size
expression need not coincide with that exact optimizer.
On the compact positive full-rank classes used here, probabilities are
comparable to $p$ and bounded away from one, the weights are bounded,
$\alpha^*$ stays away from the endpoints, and the tilted variance is comparable
to ${np}$. To verify the last assertions, the derivatives of the transform
at its endpoints are respectively minus and plus the two directional
Bernoulli KL divergences, both of order ${np}$. Its second derivative is
bounded by $C{np}$. The row separation implied by
$\sigma_K(P)\ge c p$ gives a lower bound $c{np}$ on that derivative
throughout $[0,1]$: tilted probabilities are comparable to $p$ and the
sum of squared log-odds differences is at least $cn$.
Thus \cref{lem:sharp} applies uniformly to every pair. Combining it
with the one-coordinate comparison above gives, for every fixed
bounded threshold $x$, the bounds
\[
 \PP_k(T_{k\ell}\le x)\asymp e^{-I_{k\ell}},
 \qquad -\log\PP_k(T_{k\ell}\le x)=I_{k\ell}+O(1).
\]
These are constant-factor comparisons, not exact risk identities.
The use of $\tfrac12\log(np)$ is justified by the tilted variance
being comparable to $np$ and the optimizer staying away from the
endpoints on the stated parameter class. It is not a universal
prefactor formula for vanishing signal separation.

\subsection{Fixed-\texorpdfstring{$K$}{K} comparison and label alignment}

A strict pairwise defeat of the true community forces an oracle
error; an error entails a non-strict defeat by some competitor.
Under the joint community-$k$ count law this gives
\[
 \begin{split}
 \max_{\ell\ne k}\PP_k(T_{k\ell}<0)
 &\le\PP_k\!\left\{
       \min_{\ell<k}T_{k\ell}\le0
       \ \text{or}\ \min_{\ell>k}T_{k\ell}<0\right\}\\
 &\le\sum_{\ell\ne k}\PP_k(T_{k\ell}\le0).
 \end{split}
\]
The sharp strict and non-strict tails, together with fixed $K$, imply
\begin{equation}\label{eq:oracle-pairs}
 R_{\mathrm{oracle}}
 \asymp\sum_{k=1}^K\frac{n_k}{n}
                    \sum_{\ell\ne k}e^{-I_{k\ell}}.
\end{equation}
The weighting $n_k/n$ is the original average over vertices, regrouped
by their true community. This is not a redefinition as a worst-pair
oracle. The argument requires no independence among competing tests.
Fitted partitions are aligned with the true groups by a single common
permutation on the good event. Uniform score comparisons under that
alignment imply
\[
 \E[\Mis(\widehat z,z)\ind{\mathcal G}]
 \le\frac1n\sum_i\PP(\text{aligned fitted error at }i,\mathcal G).
\]
The ensuing pairwise bounds apply to fixed true coefficient vectors; a
data-dependent matching does not become an independence assumption.

\subsection{Exceptional coefficients}

\begin{lemma}[Paying exceptional coefficients only when the row hits them]
\label{lem:sparse-truncated-transfer}
Assume the oracle hypotheses of \cref{lem:sharp}, and write $p_j$
for the un-tilted row probabilities. Suppose $p_j\le Cr_j$, as
holds here for bounded oracle likelihood coefficients. On an independent
training event $\mathcal E$, suppose
\[
 \left|\varepsilon_0+\sum_jr_j\varepsilon_j\right|\le B_1,\qquad \sum_jr_j\varepsilon_j^2\le B_2.
\]
Here $B_1,B_2$ are fixed constants. For any fixed $R>0$ and bounded
$x$, the exceptional coordinates can be charged directly:
\begin{equation}\label{eq:sparse-truncated-transfer}
 \begin{split}
 \PP(\widehat T\le x)
 \le{}& C_Re^{-I}+\PP(\mathcal E^c)\\
 &+\E\!\left[\ind{\mathcal E}
             \sum_{j:|\varepsilon_j|>R}p_j\right].
 \end{split}
\end{equation}
The last term is the expected probability bound for the tested row
hitting an exceptional coefficient. Since
\[
 \sum_{j:|\varepsilon_j|>R}p_j
 \le\frac{C}{R^2}
      \sum_jr_j\varepsilon_j^2\ind{|\varepsilon_j|>R},
\]
a corresponding bound for the truncated quadratic error also suffices.
Either expected term of order $e^{-I}$ is enough.
If $I\le Cnp$ for a fixed $C$, an exceptional-term bound
$C_se^{-snp}$ is sufficient for any fixed $s>C$.
\end{lemma}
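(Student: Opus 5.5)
The plan is to split the event $\{\widehat T\le x\}$ according to whether the tested row hits an exceptional coordinate. Fix $R>0$ and put $J=\{j:|\varepsilon_j|>R\}$, which is measurable with respect to the training data $\mathcal W$. On $\mathcal E$ write
\[
 \PP(\widehat T\le x\mid\mathcal W)\le \PP\Bigl(\sum_{j\in J}X_j\ge1\Bigm|\mathcal W\Bigr)+\PP\bigl(\widehat T\le x,\ X_j=0\ \forall j\in J\bigm|\mathcal W\bigr).
\]
Since the row is independent of $\mathcal W$, a union bound makes the first term at most $\sum_{j\in J}p_j$. Integrating over $\mathcal E$ produces the last term of \eqref{eq:sparse-truncated-transfer}, and the complement of $\mathcal E$ is charged to $\PP(\mathcal E^c)$.

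For the second term, on $\{X_j=0,\ j\in J\}$ we have $\widehat T=\widetilde T$, where $\widetilde T=T+\varepsilon_0+\sum_j\widetilde\varepsilon_jX_j$ and $\widetilde\varepsilon_j=\varepsilon_j\ind{|\varepsilon_j|\le R}$. So the second term is at most $\PP(\widetilde T\le x\mid\mathcal W)$, and the aim is to apply \eqref{eq:sharp-transfer} of \cref{lem:sharp} to $\widetilde T$. The truncated coefficients satisfy $\max_j|\widetilde\varepsilon_j|\le R$ and $\sum_jr_j\widetilde\varepsilon_j^2\le B_2$ directly.

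The mean condition is the one point needing care: truncation changes the tilted mean by $\sum_{j\in J}r_j\varepsilon_j$, which need not be bounded. I would control it by Cauchy--Schwarz, using $|\varepsilon_j|>R$ on $J$:
\[
 \Bigl|\sum_{j\in J}r_j\varepsilon_j\Bigr|\le\sum_{j\in J}r_j\varepsilon_j^2/R\le B_2/R.
\]
Hence $|\varepsilon_0+\sum_jr_j\widetilde\varepsilon_j|\le B_1+B_2/R$, a fixed constant. \Cref{lem:sharp} then gives, uniformly over $\mathcal E$ and bounded $x$,
\[
 \PP(\widetilde T\le x\mid\mathcal W)\le C\exp\{\alpha^*x+B_1+B_2/R+e^RB_2/2\}e^{-I}=C_Re^{-I}.
\]
Integrating over $\mathcal E$ completes \eqref{eq:sparse-truncated-transfer}. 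The only delicate step is this mean bound; everything else is a direct appeal to \cref{lem:sharp}.

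For the Markov-type remark, use $p_j\le Cr_j$ and $1<\varepsilon_j^2/R^2$ on $J$ to get
\[
 \sum_{j\in J}p_j\le C\sum_{j\in J}r_j\le \frac{C}{R^2}\sum_jr_j\varepsilon_j^2\ind{|\varepsilon_j|>R}.
\]
For the final claim, if $I\le Cnp$ and $s>C$, then $C_se^{-snp}\le C_se^{-Cnp}\le C_se^{-I}$. An exceptional term of that size is therefore of order $e^{-I}$.
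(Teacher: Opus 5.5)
Your proposal is correct and follows the paper's proof essentially step for step: truncate at $R$, check the mean bound $B_1+B_2/R$ and the quadratic bound $B_2$ so that \cref{lem:sharp} gives $C_Re^{-I}$ on $\mathcal E$, then pay $\PP(\mathcal E^c)$ and, conditional on training, the probability $\sum_{j:|\varepsilon_j|>R}p_j$ that the row hits an exceptional coordinate. One small labeling point: the mean-shift step is not Cauchy--Schwarz but the elementary bound $|\varepsilon_j|<\varepsilon_j^2/R$ on the exceptional set, which is exactly what the paper uses.
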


\begin{proof}
For analysis only, set
$\varepsilon'_j=\varepsilon_j\ind{|\varepsilon_j|\le R}$.
Then
\[
 \left|\varepsilon_0+\sum_jr_j\varepsilon'_j\right|
 \le B_1+\frac1R\sum_{j:|\varepsilon_j|>R}r_j\varepsilon_j^2
 \le B_1+B_2/R,\qquad
 \sum_jr_j(\varepsilon'_j)^2\le B_2.
\]
The bounded-coefficient lemma bounds the truncated score's error by
$C_Re^{-I}$ on $\mathcal E$.
The two random scores agree whenever $X_j=0$ on every exceptional
coordinate. Conditional on training, the probability they differ is
at most $\sum_{j:|\varepsilon_j|>R}p_j$. Integrating this bound and
paying $\PP(\mathcal E^c)$ proves the result. The truncation is a
proof device, not an additional operation in the decoder.
\end{proof}

\section{Aggregate geometry for the capped adjacency matrix}
\label{sec:sparse-extension}

Let $Z_{ik}=\ind{z_i=k}$ be the community indicator matrix.
The completed matrix $ZPZ^\top$ includes its diagonal, whereas
$\E A=ZPZ^\top-\diag(P_{z_i z_i})$. Their difference has norm at
most $p$ and is retained in the bounds below. The completed row
sums $\sum_jP_{z_i z_j}$ are block constant and comparable to $np$.
In this appendix put $\tau=b_0\bar D$ and
$G=\diag\{\max(D_i,\tau)\}$, with diagonal entries $G_i$.
Then $W=\tau^{1/2}G^{-1/2}$ is the algorithm's high-degree cap,
with $b_0$ sufficiently large for the structural class. Thus
$\widetilde A=WAW$ leaves every edge between vertices of degree at most $\tau$
unchanged. For analysis only, write
$Q=\widetilde A/\tau=G^{-1/2}AG^{-1/2}$ and
$\norm v_G^2=v^\top Gv$. The scalar rescaling $Q$ has the same
eigenvectors as the encoded adjacency matrix $\widetilde A$.

\begin{proposition}[Aggregate capped-adjacency geometry]\label{prop:sparse-floor}
Under \eqref{eq:structural}, let $np\to\infty$ and $np^2\le C_1$.
For every fixed $r>0$, with probability at least $1-C_rn^{-r}$,
$c{np}\le\tau\le C{np}$, $\tau\ge\max_i\sum_jP_{z_i z_j}$,
$c{n^2p}\le\sum_iD_i$, and
\begin{align}
 \norm{W(A-{ZPZ^\top})W}_{\op}&\le C_r\sqrt {np},
 &\sum_iG_i&\le C_r{n^2p},\label{eq:floor-noise}\\
 \sum_i\frac{(D_i-\sum_jP_{z_i z_j})^2}{G_i}
 +\sum_i\frac{(G_i-\tau)^2}{G_i}
 &\le C_rn.\label{eq:floor-degree-energy}
\end{align}
In particular,
$\norm{G^{-1/2}(A-{ZPZ^\top})G^{-1/2}}_{\op}\le C_r/\sqrt {np}$.
Let $U$ contain the $K$ eigenvectors of $\widetilde A$ with largest absolute
eigenvalues, and put $\Pi=UU^\top$ and $H=G^{-1/2}\Pi G^{1/2}$.
The eigenvalue equation gives $\widehat A=AH$ for the reconstruction
$\widehat A=W^{-1}U\Lambda U^\top W^{-1}$ in \cref{restore:algorithm}.
The retained eigenvalues of $\widetilde A$ have magnitude at least $c_r{np}$;
the remaining eigenvalues have magnitude at most $C_r\sqrt {np}$.
There is an orthogonal matrix $O$ such that the orthonormal population
basis
\[
 V=G^{-1/2}Z(Z^\top G^{-1}Z)^{-1/2}O
\]
satisfies
\[
 \norm{U-V}_F\le C_r/\sqrt {np},
 \qquad \sqrt{G_i}\norm{V_{i\cdot}}_2\le C_r\sqrt{p}.
\]
Moreover,
\begin{equation}\label{eq:floor-block-residual}
 \norm{(I-\Pi)G^{1/2}Z}_F\le C_r\sqrt n,
\end{equation}
and every block-constant $f$ with $\norm f_\infty\le C_f$ obeys
\begin{equation}\label{eq:floor-weighted-errors}
 \norm{(I-H)f}_G\le C_rC_f\sqrt n,\qquad
 \norm{(I-H)f}_2^2\le \frac{C_rC_f^2}{p}.
\end{equation}
The same assertions hold simultaneously after deletion of any fixed
number of vertices, with each cap recomputed from the remaining graph,
after increasing the probability parameter. No raw minimum-degree
condition is imposed.
\end{proposition}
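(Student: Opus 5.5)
The plan is to work on a single high-probability event built from scalar degree concentration and a regularized operator-norm bound, and to derive every remaining assertion deterministically from it.

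\emph{Degree bounds.} Since $\sum_i D_i=2\sum_{i<j}A_{ij}$ is a sum of independent Bernoulli variables with mean $\asymp n^2p\to\infty$, Bernstein's inequality gives $\bar D\asymp np$ with probability $1-C_rn^{-r}$. Hence $\tau=b_0\bar D\asymp np$. Taking $b_0$ large relative to $\omega$ and $\beta$ makes $\tau\ge\max_i\sum_jP_{z_iz_j}$, because each expected row sum is at most $np$ while $\bar D\ge c\,np$.

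\emph{Degree energy.} For \eqref{eq:floor-degree-energy}, use $G_i\ge\tau\gtrsim np$. Then $\sum_i(D_i-\mu_i)^2/G_i\le\sum_i(D_i-\mu_i)^2/\tau$, where $\mu_i=\sum_jP_{z_iz_j}$. The variance sum is $\asymp n\cdot np$, so the expectation of this quantity is $O(n)$. For the $n^{-r}$ probability I would not use Markov. Instead I would bound the number of vertices with $|D_i-\mu_i|>t\sqrt{np}$ by Chernoff tails $e^{-ct^2}$ (or $e^{-ct\sqrt{np}\log t}$ in the large range) and control these counts via a bounded-differences or Poissonization argument. The degrees are only weakly dependent, since each edge touches two of them.

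\emph{Second half of the energy and $\sum_iG_i$.} We have $(G_i-\tau)^2/G_i\le (D_i-\tau)_+\le |D_i-\mu_i|$ whenever $G_i>\tau$. The same tail counting then bounds this term, and also gives $\sum_i G_i\le n\tau+\sum_i(D_i-\tau)_+\lesssim n^2p$.

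\emph{Regularized operator norm.} The bound $\norm{W(A-ZPZ^\top)W}_{\op}\lesssim\sqrt{np}$ is the regularized concentration of \citet{llv2017}, in the form used by \citet{zhouamini2019}. Downweighting the rows and columns of degree above $\tau$ gives $\sqrt{np}$ with polynomial probability. Two adjustments are needed. First, the diagonal difference $\diag(P_{z_iz_i})$ has norm at most $p$, which is negligible. Second, $\tau$ is data-dependent. I would handle this by a union over a grid of $O(\log n)$ deterministic thresholds comparable to $np$, since $W$ is monotone in $\tau$. Dividing by $\tau$ then gives the $G^{-1/2}$ form.

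\emph{Spectral assertions via Weyl and Davis--Kahan.} The signal is $WZPZ^\top W=\tau\,G^{-1/2}ZPZ^\top G^{-1/2}$. Its nonzero eigenvalues are those of $\tau\,P^{1/2}\cdots$, or more precisely of $\tau (Z^\top G^{-1}Z)^{1/2}P(Z^\top G^{-1}Z)^{1/2}$ in the signed sense, so I must show $Z^\top G^{-1}Z$ is comparable to $(n/np)I$. The upper side uses $G_i\ge\tau$. The lower side is the key point and uses \emph{no} minimum degree: by Cauchy--Schwarz, $\sum_{i\in k}1/G_i\ge n_k^2/\sum_{i\in k}G_i\gtrsim n/(np)$ by $\sum_iG_i\lesssim n^2p$. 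Combined with $\sigma_K(P)\ge cp$, this gives signal eigenvalues of order $np$. Weyl's inequality then separates the retained eigenvalues ($\gtrsim np$) from the remaining ones ($\lesssim\sqrt{np}$). Davis--Kahan with gap $\asymp np$ gives $\norm{U-V}_F\lesssim\sqrt K\sqrt{np}/np$ after Procrustes alignment. Finally, $\sqrt{G_i}\norm{V_{i\cdot}}\le\norm{(Z^\top G^{-1}Z)^{-1/2}}\lesssim\sqrt{p}$.

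\emph{Block residual and weighted errors.} The identity $\widehat A=AH$ follows from $\Lambda U^\top=U^\top\widetilde A$. For \eqref{eq:floor-block-residual}, note that $G^{1/2}Z=G\cdot G^{-1/2}Z$, so I would write $(I-\Pi)G^{1/2}Z=(I-\Pi)(G-\tau)G^{-1/2}Z+\tau(I-\Pi)G^{-1/2}Z$. The first term has Frobenius norm bounded by $\sum_i(G_i-\tau)^2/G_i\lesssim n$. For the second, $G^{-1/2}Z$ lies in the column space of $V$, so its residual is at most $\norm{U-V}_{\op}\,\norm{G^{-1/2}Z}_F\,\tau\lesssim (np)^{-1/2}\cdot\sqrt{n/(np)}\cdot np=\sqrt n$.

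Then \eqref{eq:floor-weighted-errors} follows for $f=Zc$. First, $\norm{(I-H)f}_G=\norm{(I-\Pi)G^{1/2}f}_2\le\norm{(I-\Pi)G^{1/2}Z}_{\op}\norm c_2$. Second, dividing by $\min_iG_i\ge\tau\gtrsim np$ gives $\norm{(I-H)f}_2^2\lesssim n/(np)=1/p$.

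\emph{Deleting vertices.} Removing a fixed number of vertices perturbs $\bar D$ by $O(p)$, and every estimate above was proved with a polynomial probability that survives a union over $O(n^{m})$ deletion patterns after increasing $r$.

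\emph{Main obstacle.} I expect the hardest step to be the regularized operator-norm bound with the data-dependent threshold at polynomial probability, together with the high-probability (rather than expectation) form of \eqref{eq:floor-degree-energy}.
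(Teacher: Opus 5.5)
Your spectral half matches the paper's argument. That covers the Cauchy--Schwarz lower bound on $Z^\top G^{-1}Z$, Weyl and Davis--Kahan with gap of order $np$, the split of $(I-\Pi)G^{1/2}Z$ into a $(G-\tau I)$ part and a part in $\col(G^{-1/2}Z)$, and the $G$-orthogonal reading of $H$. The gaps are in the probabilistic inputs.

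\textbf{The degree-energy bound.} You flag the high-probability form of \eqref{eq:floor-degree-energy} as the main obstacle, and your plan leaves it open. The tool you name would not close it. Each of the $\binom n2$ edges moves a threshold count by at most $2$, so McDiarmid only gives fluctuations of order $n\sqrt{\log n}$. That is vacuous for counts bounded by $n$, and ``Poissonization'' is not specified. By replacing $G_i$ with $\tau$ you also throw away the regularization that makes a short argument available. The missing observation is that this bound is a deterministic corollary of the regularized norm bound, because $G^{-1/2}(A-ZPZ^\top)\one$ has coordinates $(D_i-\mu_i)/\sqrt{G_i}$:
\[
 \sum_i\frac{(D_i-\mu_i)^2}{G_i}
 =\norm{G^{-1/2}(A-ZPZ^\top)G^{-1/2}\,G^{1/2}\one}_2^2
 \le\frac{C_r}{np}\sum_iG_i\le C_rn .
\]
Here $\sum_iG_i\le\sum_iD_i+n\tau\lesssim n^2p$ needs no tail counting. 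The bound therefore inherits the $1-C_rn^{-r}$ probability at no cost. The second energy sum follows coordinatewise: $\tau\ge\mu_i$ gives $0\le G_i-\tau\le(D_i-\mu_i)_+$. Your direct route can be rescued if you prefer it. Write $D_i=\sum_{j>i}A_{ij}+\sum_{j<i}A_{ij}$; each of these two families is independent across $i$, so a truncated Bernstein bound applies. It is unnecessary, though.

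\textbf{Applying \citet{llv2017}.} That theorem applies only when the reweighted vertices form a set of at most $10n/d=10/p$ elements. It also bounds $\norm{WAW-\E A}_{\op}$, not $\norm{W(A-ZPZ^\top)W}_{\op}$. Passing between the two needs, besides the diagonal term you mention, the bound $\norm{WZPZ^\top W-ZPZ^\top}_{\op}\le Cp\sqrt{n|\mathcal S|}$, where $\mathcal S$ is the set of capped vertices. This is $O(\sqrt{np})$ only if $|\mathcal S|\lesssim 1/p$, and you never establish that count.

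The paper proves $|\{i:D_i>2np\}|\le 10/p$ off an event of probability $e^{-cn}$ by a union over vertex sets $T$ of size $m=\lfloor 10/p\rfloor+1$. The sum $\sum_{i\in T}D_i$ is an independent-edge sum with coefficients in $\{1,2\}$. Its exponential moment at $t=(\log 2)/2$ gives a tail $e^{-c\,m\,np}$, and $\log\binom nm=o(m\,np)$.

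With this count, no grid over thresholds is needed. Compare to the single deterministic cap $W_h=\diag\{\min(1,2np/D_i)^{1/2}\}$ through $G^{-1/2}=(2np)^{-1/2}MW_h$, where $M=\diag\{(\max(D_i,2np)/G_i)^{1/2}\}$ is bounded because $G_i\ge\max(D_i,c\,np)$. In your grid version as well, what makes it work is this bounded diagonal ratio between nearby caps, not monotonicity of $W$ in $\tau$.
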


\begin{proof}
The Bernoulli transform of the total number of edges gives
$c{n^2p}\le\sum_iD_i\le C{n^2p}$ except with probability $e^{-c{n^2p}}$.
For large enough $b_0$, this event implies
\[
 c{np}\le\tau=b_0\bar D\le C{np},
 \qquad \tau\ge\max_i\sum_jP_{z_i z_j}.
\]
Since $\E A_{ij}\le p$, put
$W_{h,ii}=\sqrt{\min(1,2{np}/D_i)}$, with value one at zero degrees.
The changed set $\mathcal S=\{i:D_i>2{np}\}$ contains at most
$\frac{10}{p}$ vertices except with probability $e^{-cn}$. To check the
strength of this bound, set $m=\lfloor\frac{10}{p}\rfloor+1$. If $m\le n$
and $m$ vertices exceed the threshold, some set $T$ of size $m$ has
$\sum_{i\in T}D_i>2m{np}$. This is an independent-edge sum with
coefficients in $\{1,2\}$ and mean at most $m{np}$. At
$t=(\log2)/2$, its exponential transform bounds the tail by
\[
 \exp\{-(\log2-1/2)m{np}\},
\]
because $(e^{tc}-1)/c\le(e^{2t}-1)/2=1/2$ for $c=1,2$.
Since $\log\binom nm\le m\log(en/m)=o(m{np})$, a union over $T$
gives the stated bound. If $m>n$, the claim is automatic.

Every row of $W_hAW_h$ has squared Euclidean norm at most
$W_{h,ii}^2D_i\le2{np}$. The row-norm form of the regularized adjacency
theorem of \citet[Theorem 2.1 and Remark 2.2]{llv2017} therefore yields
\[
 \norm{W_hAW_h-\E A}_{\op}\le C_r\sqrt {np}
\]
with failure probability at most $C_rn^{-r}$. The theorem permits
these graph-dependent weights. Also
\[
 \norm{W_h{ZPZ^\top} W_h-{ZPZ^\top}}_{\op}
 \le Cp\sqrt{n|\mathcal S|}\le C\sqrt {np},
\]
by the Frobenius norm: the difference is supported on rows or columns
in $\mathcal S$. The completed-mean diagonal has norm $O(p)$.
Consequently $\norm{W_h(A-{ZPZ^\top})W_h}_{\op}\le C_r\sqrt {np}$.
The exact diagonal comparison is
\[
 G^{-1/2}=(2{np})^{-1/2}
 \diag\!\left\{\sqrt{\frac{\max(D_i,2{np})}{G_i}}\right\}W_h.
\]
The displayed diagonal multiplier is bounded, since
$G_i\ge\max(D_i,c{np})$.
Conjugation gives the stated bound for the noise in $Q$;
multiplication by $\tau\asymp {np}$ proves the adjacency-scale bound
in \eqref{eq:floor-noise}. Since $G_i\le D_i+\tau$, the total
weight $\sum_iG_i$ is at most $C{n^2p}$.

Applying the scalar-rescaled noise to $G^{1/2}\one$ gives
\[
 \norm{G^{-1/2}(A-{ZPZ^\top})\one}_2
 \le C_r{(np)}^{-1/2}\norm{G^{1/2}\one}_2\le C_r\sqrt n,
\]
which is the first sum in \eqref{eq:floor-degree-energy}.
For the second sum, $G_i-\tau=0$ when $D_i\le\tau$; otherwise
$0\le G_i-\tau=D_i-\tau\le D_i-\sum_jP_{z_i z_j}$. Thus, coordinatewise,
\[
 \frac{(G_i-\tau)^2}{G_i}
 \le\frac{(D_i-\sum_jP_{z_i z_j})^2}{G_i}.
\]
The same bound also gives
$\sum_i(1-W_{ii})^2\le \frac{C_r}{p}$, since
$(1-W_{ii})^2\le(G_i-\tau)^2/(\tau G_i)$.

Let $J=Z^\top G^{-1}Z$. For every block $k$, Cauchy--Schwarz,
comparable group sizes, $G_i\ge c{np}$ and the total-degree bound give
\[
 \frac{c}{p}\le
 \frac{n_k^2}{\sum_{i:z_i=k}G_i}
 \le J_{kk}=\sum_{i:z_i=k}G_i^{-1}\le\frac{C}{p}.
\]
The nonzero spectrum of the capped population matrix $W{ZPZ^\top} W$ is
the spectrum of $\tau J^{1/2}PJ^{1/2}$. The bound
$\sigma_K(P)\ge c p$ gives a separation of order ${np}$ from zero.
The adjacency noise bound and
subspace perturbation then give the stated eigenspace and eigenvalue
bounds. The formula for $V$
and the lower bound on $J$ give its weighted row bound.

The matrix $\tau G^{-1/2}Z$ belongs to the population signal space
$\col(WZ)=\col(G^{-1/2}Z)$. Furthermore,
\[
 \norm{G^{1/2}Z-\tau G^{-1/2}Z}_F^2
 =\sum_i\frac{(G_i-\tau)^2}{G_i}\le C_rn,
 \qquad \norm{G^{1/2}Z}_F^2\le C_r{n^2p}.
\]
Together with the projector error $O({(np)}^{-1/2})$, these imply
\eqref{eq:floor-block-residual}. The matrix $H$ is the
$G$-orthogonal projector corresponding to $\Pi$, so the first bound
in \eqref{eq:floor-weighted-errors} follows by applying the residual bound to the $K$ block values of $f$.
The second uses $G_i\ge c{np}$.

Deleting a fixed number of vertices preserves the structural bounds.
Apply the same argument to each recomputed graph and choose a larger
fixed probability parameter before the finite-depth union bound.
\end{proof}

\section{Truncated spectral-row energy}
\label{restore:tail-section}

The next lemma controls large spectral rows in expectation. Its proof
uses a recurrence at a single truncation threshold, together with
concentration of independent deleted-row innovations.

Work under the fixed structural assumptions of
\cref{prop:sparse-floor}, with $np\to\infty$ and $np^2\le C_1$.
For the high-degree cap in \cref{restore:algorithm}, recall
\[
 \tau=b_0\bar D,\qquad
 G_j=\max(D_j,\tau),\qquad
 W=\tau^{1/2}G^{-1/2},\qquad \widetilde A=WAW.
\]
Let $U$ be an orthonormal basis of the informative $K$-dimensional
eigenspace and write $\widetilde AU=U\Lambda$.
All bounds below are uniform in the fixed compact structural class.
In particular,
\begin{equation}
 0\le{P_{z_i z_j}}\le Cp,\qquad
 \norm{{(ZPZ^\top)_{i\cdot}}}_2\le C{np}/\sqrt n.
 \label{restore:tail-population-rows}
\end{equation}

Fix a probability parameter $r>0$.  Apply
\cref{prop:sparse-floor} with a sufficiently larger fixed parameter to
the full graph and all graphs obtained by deleting at most two vertices.
Let $\mathcal E_r$ be the intersection of the resulting geometry and
two-sided total-degree events.  Restricting the population matrix
preserves the fixed structural bounds and comparable block masses.
Thus
\begin{equation}
 \PP(\mathcal E_r^c)\le C_rn^{-r}.
 \label{restore:tail-geometry-probability}
\end{equation}
On this event, for every graph in the deletion family, its recomputed
threshold is comparable to ${np}$, its total degree is at most $C_r{n^2p}$,
its centered weighted noise has norm at most $C_r\sqrt {np}$, its
informative eigenvalues have absolute values at least $c_r{np}$, and its
remaining eigenvalues have absolute values at most $C_r\sqrt {np}$.
For a deleted training graph, use the analogous event formed from
that graph and its further deletions. This event is measurable from
the training graph alone and has the same probability bound.

The following deterministic properties of the cap will be useful:
\begin{equation}
 W_j\le1,\qquad W_j^2D_j\le\tau,
 \qquad \norm{\widetilde A}_{\op}\le\tau.
 \label{restore:tail-floor-operator}
\end{equation}
Indeed, $G\ge\diag(A\one)$ and the inequality
$2|ab|\le a^2+b^2$ gives
$|x^\top Qx|\le\sum_j(D_j/G_j)x_j^2\le\norm x_2^2$,
where $Q=\widetilde A/\tau$.

\begin{lemma}[Truncated energy for the capped-adjacency eigenvectors]
\label{restore:tail-energy-lemma}
There are fixed constants $R_0,C,c>0$ such that, for every fixed
$R\ge R_0$ and all sufficiently large $n$,
\begin{equation}
 \E\!\left[\ind{\mathcal E_r}
       \sum_{j=1}^n\norm{U_{j\cdot}}_2^2
                  \ind{\sqrt n\norm{U_{j\cdot}}_2>R}\right]
 \le CR^2e^{-c{np}\log R}.
 \label{restore:tail-energy}
\end{equation}
The constants $R_0,C,c$ depend on the fixed structural bounds,
$b_0$, and $r$, but not on $R$ or $n$.  The sufficiently-large-$n$
condition may depend on the chosen fixed $R$.  The same result holds
for each recomputed deleted graph with its training-measurable
geometry event.
\end{lemma}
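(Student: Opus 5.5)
Write $u_j=\sqrt n\,\norm{U_{j\cdot}}_2$. The plan is to bound large rows through a row-wise eigenvector identity and then take expectations using the independence of each vertex's row from its deleted graph. On $\mathcal E_r$ the informative eigenvalues satisfy $|\lambda|\ge c_r np$, so the eigenvalue equation gives
\[
U_{j\cdot}=\widetilde A_{j\cdot}U\Lambda^{-1},\qquad
\norm{U_{j\cdot}}_2\le \frac{C}{np}\,W_j\sum_{j'}A_{jj'}W_{j'}\norm{U_{j'\cdot}}_2 .
\]
By \eqref{restore:tail-floor-operator} we have $W\le 1$. Hence a large row at $j$ needs one of two things: the neighbours of $j$ carry total normalized row mass $\sum_{j'}A_{jj'}u_{j'}\gtrsim R\,np$, or some neighbour already has a large row. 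This gives a recurrence at the single threshold $R$.

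\textbf{Step 1 (split by the threshold).} Write $\sum_{j'}A_{jj'}u_{j'}$ as a bulk part $\sum_{j'}A_{jj'}u_{j'}\ind{u_{j'}\le R}$ plus a tail part.
\begin{itemize}
\item The bulk part is at most $R$ times the number of neighbours with $u_{j'}\le R$. Replacing $U$ by the deleted eigenvector $U^{(-j)}$ from \cref{prop:sparse-floor}, the bulk becomes a sum of independent Bernoulli edges weighted by training-measurable weights. Most of these weights are $O(1)$, since $\norm{U-V}_F\le C_r/\sqrt{np}$ and $\sqrt n\norm{V_{j\cdot}}\lesssim 1$ by $G_j\ge c\,np$.
\item A Chernoff bound at level $R\,np/C$ for a sum with mean $O(np)$ then gives probability at most $e^{-c\,np\log R}$. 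This is the source of the factor $\log R$.
\end{itemize}

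\textbf{Step 2 (control the tail part).} Use Cauchy--Schwarz together with $W_j^2D_j\le\tau$. This bounds $W_j\sum_{j'}A_{jj'}u_{j'}\ind{u_{j'}>R}$ by $\sqrt{\tau}\,(\sum_{j'}A_{jj'}u_{j'}^2\ind{u_{j'}>R})^{1/2}$. Squaring and summing over $j$ shows that the truncated energy
\[
\mathcal T=\sum_j\norm{U_{j\cdot}}_2^2\ind{u_j>R}
\]
is controlled by two pieces:
\begin{itemize}
\item a contribution from vertices whose bulk count is atypical, handled in Step 3;
\item a term $\frac{C\tau}{(np)^2}\,\mathbf{1}^\top \widetilde A\,(\text{tail energy vector})$. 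Using $\norm{\widetilde A}_{\op}\le\tau$ and the spectral gap, this term is at most a fraction $\le 1/2$ of $\mathcal T$ once $R\ge R_0$.
\end{itemize}
The fraction bound should follow by the same projection argument that gives $\norm{(I-\Pi)G^{1/2}Z}_F\le C\sqrt n$: heavy rows can only be supported on a vertex set of total weighted mass $O(1/p)$. Absorbing the fraction leaves $\mathcal T\lesssim$ the atypical-vertex contribution.

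\textbf{Step 3 (expectation and deletion).} The atypical-vertex contribution is $\sum_j\norm{U_{j\cdot}}^2\ind{\text{bulk count at }j\text{ atypical}}$. Pointwise, $\norm{U_{j\cdot}}_2^2\le 1$. The more useful bound on $\mathcal E_r$ is $\norm{U_{j\cdot}}^2\lesssim (D_j/np)^2/n$, up to row mass. Taking expectations of the indicator via the deleted-graph coupling (the deletion family of depth two is included in $\mathcal E_r$) gives $n\cdot n^{-1}\cdot R^2 e^{-c\,np\log R}$. The $R^2$ comes from the truncation level, since rows just above the threshold carry energy about $R^2/n$. The deleted-graph statement follows verbatim, since every ingredient was applied to the whole deletion family.

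\textbf{Main obstacle.} I expect the hardest step to be the absorption in Step 2, because the heavy set is itself chosen using the whole graph. I would first try a deterministic argument valid for every vertex set $E$ simultaneously on $\mathcal E_r$. It would bound $\sum_{j\in E}\norm{U_{j\cdot}}^2$ by $\norm{U_E}_F^2\lesssim |E|/n+\norm{(U-V)_E}_F^2$ and then use the operator-norm noise bound, so that no union over sets is needed. If that fails, I would iterate the recurrence over dyadic thresholds $2^mR$ instead.
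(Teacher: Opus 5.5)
\textbf{Gap 1: the absorption in Step~2 fails.} Your architecture matches the paper's: a single-threshold recurrence, independence from the deleted graph, and a Chernoff bound with weights capped at the threshold giving the $np\log R$ exponent. Your uncentered Chernoff bound is also a fine substitute for the paper's Bennett bound on $K_i$. The problem is the contraction. Let $\lambda_*$ be the smallest retained eigenvalue magnitude. After Cauchy--Schwarz and $W_j^2D_j\le\tau$, the summed tail term is $\lambda_*^{-2}\tau\sum_{j'}D_{j'}W_{j'}^2\|U_{j'\cdot}\|_2^2\ind{u_{j'}>R}\le(\tau/\lambda_*)^2\mathcal T$. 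Since $\lambda_*\le\|\widetilde A\|_{\op}\le\tau$, this coefficient is at least one, and it does not depend on $R$. Any operator-norm bound for the uncentered $\widetilde A$ has the same defect, because its norm is of the order of the retained eigenvalues. The missing idea is centering. The paper writes $U=WZPZ^\top WU\Lambda^{-1}+E_0\bar U\Lambda^{-1}+E_0E_U\Lambda^{-1}$ with $E_0=W(A-ZPZ^\top)W$. The mean part acts on $U$ as a matrix, so its rows have norm at most $C/\sqrt n$ by \eqref{restore:tail-population-rows}. Only the noise acts on the excess $E_U$, and $\|E_0\|_{\op}^2\le Cnp$ on $\mathcal E_r$. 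This gives a term $(C/np)\mathcal T_R(U)$, which is absorbed because $np\to\infty$. If you prefer to keep row-norm sums, you still have to split off the mean. You would then use that the tail vector $e_{j'}=\|U_{j'\cdot}\|_2\ind{u_{j'}>R}$ is supported on fewer than $nK/R^2$ vertices, so $\|WZPZ^\top We\|_2\le Cnp\|e\|_2/R$. Your fallback does not repair this. The bound $\|U_S\|_F^2\lesssim|S|/n+\|(U-V)_S\|_F^2$ only gives $\mathcal T\lesssim R^{-2}+(np)^{-1}$, which is neither a contraction nor exponentially small.

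\textbf{Gap 2: the deleted-graph replacement is asserted, not proved.} Steps~1 and~3 rely on replacing $U$ by $U^{(-j)}$, and \cref{prop:sparse-floor} does not supply this. Its per-graph bounds give at best $\|U-U^{(-j)}O\|_F\lesssim(np)^{-1/2}$. The resulting error in $\lambda_*^{-1}\sum_{j'}\widetilde A_{jj'}\|U_{j'\cdot}\|_2$ can then be of order $\sqrt\tau(np)^{-3/2}\asymp(np)^{-1}$, which exceeds the threshold $R/\sqrt n$ whenever $np\ll\sqrt n$. The paper needs several further ingredients here:
\begin{itemize}
\item the refined residual $\|U-U^{(-i)}O_i\|_F\le C\|U_{i\cdot}\|_2+C\|\Gamma_iU\|_F$, from a Sylvester equation and Procrustes alignment that track the recomputed cap weights through $\Gamma_i$;
\item a $1$-Lipschitz truncation, the radial projection $f_t$, because your cut $u\ind{u\le R}$ jumps by $R$ and does not survive the comparison;
\item absorption of the self term through $\xi_i^2\le Cnp$;
\item the split of $\Gamma_iU$ into $\Gamma_i\bar U$ and the summable $\Gamma_iE_U$, with a separate term for degrees $D_i\ge c_*(np)^3$.
\end{itemize}
Finally, Step~3 must bound the expected energy of an atypical row, not just its probability. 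That energy is controlled by the square of the row's bulk sum, whose tail must be integrated as in \eqref{restore:tail-innovation-moment}. Multiplying the probability by the energy $R^2/n$ just above the threshold does not suffice.
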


\begin{proof}
We give the weight perturbation, the same-threshold energy recurrence,
and its expectation bounds separately.

\medskip\noindent\textbf{Recomputed weights and the eigenspace residual.}
For $J=[n]\setminus\{i\}$, let $W^{(-i)}$ and $U^{(-i)}$ be
computed from $A_{JJ}$, with $U^{(-i)}$ embedded by a zero row at $i$.
Outside the geometry event choose an arbitrary deterministic
orthonormal fallback depending only on $A_{JJ}$.
For an all-zero training graph set $W^{(-i)}=I$ and use that
orthonormal fallback. Thus the proof-only completion always has
$\norm{W^{(-i)}}_{\op}\le1$, and the independent innovation below
is defined for every training graph.
Put $(\Gamma_i)_{jj}=W_j^{(-i)}/W_j-1$ for $j\in J$ and
$(\Gamma_i)_{ii}=0$.
The exact threshold difference is
\[
 \tau^{(-i)}-\tau
 =b_0\frac{\bar D-2D_i}{n-1}.
\]
Comparing the two square-root caps, using the comparability of the
thresholds to ${np}$, gives on $\mathcal E_r$
\begin{equation}
 |(\Gamma_i)_{jj}|
 \le C\left\{\frac{A_{ij}}{G_j}
             +\frac{{np}+D_i}{{n^2p}}\right\},
 \qquad \norm{\Gamma_i}_{\op}\le C/(np).
 \label{restore:tail-weight-change}
\end{equation}
This also covers a crossing of the cap threshold. It follows either
by splitting the cases in the two maxima or by comparing the logarithms
of $\sqrt{\tau/\max(D_j,\tau)}$; the relative threshold change is
$O(({np}+D_i)/({n^2p}))$.  The last bound uses $D_i\le n$ and ${np}\le Cn$.

There is an orthogonal matrix $O_i$ such that
\begin{equation}
 \norm{U-U^{(-i)}O_i}_F
 \le C\norm{U_{i\cdot}}_2+C\norm{\Gamma_iU}_F.
 \label{restore:tail-loo-residual}
\end{equation}
To verify this, write
$\widetilde A=\left(\begin{smallmatrix}0&m_i^\top\\m_i&B_i\end{smallmatrix}\right)$.
By \eqref{restore:tail-floor-operator},
$\norm{m_i}_2\le\sqrt\tau$ and $\norm{B_i}_{\op}\le\tau$.
Also $B_iU_J=U_J\Lambda-m_iU_{i\cdot}$.
The lower block of the recomputed deleted matrix is
$(I+\Gamma_i)B_i(I+\Gamma_i)$.
Subtracting the full matrix and applying the difference to $U$ gives
a Frobenius residual bounded by
$C{np}(\norm{U_{i\cdot}}_2+\norm{\Gamma_iU}_F)$.
The full informative spectrum is separated from the deleted
noninformative spectrum by $c{np}$.
The complementary-space Sylvester equation and orthogonal Procrustes
alignment give \eqref{restore:tail-loo-residual}.
Only separation from the complementary spectrum is used; the
informative eigenvalues can repeat or have either sign.

\medskip\noindent\textbf{Truncation and summable weight changes.}
Set $t=R/\sqrt n$.  Let $f_t$ be radial projection onto the Euclidean
ball of radius $t$, applied to each row, and put
\[
 \bar U=f_t(U),\qquad E_U=U-\bar U,\qquad
 \mathcal T_R(U)=\sum_j\norm{U_{j\cdot}}_2^2
                         \ind{\norm{U_{j\cdot}}_2>t}.
\]
The radial projection is $1$-Lipschitz and commutes with orthogonal
rotations.  Moreover $\norm{E_U}_F^2\le \mathcal T_R(U)$,
$\max_j\norm{\bar U_{j\cdot}}_2\le t$, and $\norm{\bar U}_F\le\sqrt K$.
Define
\[
 \xi_i=W_i\left(\sqrt{D_i}+C{np}/\sqrt n\right).
\]
By \eqref{restore:tail-floor-operator}, $\xi_i^2\le C{np}$.
Equation \eqref{restore:tail-weight-change} yields
\[
 \norm{\Gamma_i\bar U}_F
 \le Ct\frac{\sqrt{D_i}}{np}+C\frac{{np}+D_i}{{n^2p}},
\]
and therefore
\begin{equation}
 \frac{C\xi_i}{np}\norm{\Gamma_i\bar U}_F
 \le Ct\left\{\frac1{np}+\frac{\sqrt{D_i}}{{(np)}^{3/2}}\right\}.
 \label{restore:tail-bounded-weight}
\end{equation}
For the global term, use $t\ge n^{-1/2}$, $D_i\le n$, and
${np}\le Cn$: $D_i/(n{(np)}^{3/2})\le t\sqrt{D_i}/{(np)}^{3/2}$ and
$1/(n\sqrt {np})\le Ct/(np)$.
The square-root cap does not give $W_iD_i\le C{np}$. The
right side of \eqref{restore:tail-bounded-weight} can exceed a
sufficiently small fixed multiple of $t$, for large ${np}$, only if
$D_i\ge c_*{(np)}^3$.

The unbounded part satisfies the summable estimate
\begin{equation}
 \sum_i\xi_i^2\norm{\Gamma_iE_U}_F^2\le C\norm{E_U}_F^2.
 \label{restore:tail-summable-weight}
\end{equation}
Indeed, after interchanging $i,j$, the local coefficient of
$\norm{(E_U)_{j\cdot}}_2^2$ is at most
$C{np}D_j/G_j^2\le C$.
The global coefficient is bounded by
\[
 \frac{C}{n^2{np}}\sum_i({np}+D_i)^2\le C,
\]
because $\sum_iD_i\le C{n^2p}$ and
$\sum_iD_i^2\le n\sum_iD_i$.
These estimates do not use a maximum-degree event.

\medskip\noindent\textbf{Independent innovations and the recurrence.}
Define the unrotated, training-measurable-coefficient innovation
\begin{equation}
 K_i=(A_{iJ}-{(ZPZ^\top)_{iJ}})W^{(-i)}f_t(U_J^{(-i)}).
 \label{restore:tail-innovation}
\end{equation}
Let $E_0=W(A-{ZPZ^\top})W$.
The eigenvector equation is
\[
 U=W{ZPZ^\top} WU\Lambda^{-1}+E_0\bar U\Lambda^{-1}+E_0E_U\Lambda^{-1}.
\]
The first term has every row bounded by $C/\sqrt n$, by
\eqref{restore:tail-population-rows}.
For the second, compare $W_J\bar U_J$ to
$W^{(-i)}f_t(U_J^{(-i)})O_i$ and use radial Lipschitzness,
\eqref{restore:tail-loo-residual}, and
\eqref{restore:tail-weight-change}.  This gives
\begin{align}
 \norm{U_{i\cdot}}_2\le{}&
 C/\sqrt n+(C/(np))\norm{K_i}_2
 +(C\xi_i/(np))\norm{U_{i\cdot}}_2\nonumber\\
 &+(C\xi_i/(np))\norm{\Gamma_i\bar U}_F
 +(C\xi_i/(np))\norm{\Gamma_iE_U}_F
 +(C/(np))\norm{(E_0E_U)_{i\cdot}}_2.
 \label{restore:tail-row-recursion}
\end{align}
The completed-mean diagonal contributes at most $Cpt/(np)\le Ct/n$
and is absorbed in the bounded terms; it is not treated as an
observed self-loop.
Although $O_i$ may depend on the tested row, the argument only uses
$\norm{K_iO_i}_2=\norm{K_i}_2$.
No conditional concentration is applied to rotated coordinates.

Take $R$ sufficiently large and then ${np}$ sufficiently large to absorb
the baseline and self terms.  Retain from
\eqref{restore:tail-bounded-weight} the excess
$Ct\sqrt{D_i/{(np)}^3}\ind{D_i\ge c_*{(np)}^3}$.
For $\norm{U_{i\cdot}}_2>t$, separate the cases
$\norm{K_i}_2>c'{np}t$ and $\norm{K_i}_2\le c'{np}t$ and square.
The resulting bound is
\begin{align}
 \norm{U_{i\cdot}}_2^2\ind{\norm{U_{i\cdot}}_2>t}
 \le{}&\frac C{{(np)}^2}\norm{K_i}_2^2
                         \ind{\norm{K_i}_2>c'{np}t}\nonumber\\
 &+\frac C{{(np)}^2}\norm{(E_0E_U)_{i\cdot}}_2^2
 +\frac{C\xi_i^2}{{(np)}^2}\norm{\Gamma_iE_U}_F^2
 +\frac{Ct^2}{{(np)}^3}D_i\ind{D_i\ge c_*{(np)}^3}.
 \label{restore:tail-pointwise-recursion}
\end{align}
Summing, using $\norm{E_0}_{\op}^2\le C{np}$ and
\eqref{restore:tail-summable-weight}, and multiplying by
$\ind{\mathcal E_r}$ gives
\begin{align}
 \mathcal T_R(U)\ind{\mathcal E_r}
 \le{}&\frac C{{(np)}^2}\sum_i\norm{K_i}_2^2
                         \ind{\norm{K_i}_2>c'{np}t}
 +\frac C{np}\mathcal T_R(U)\ind{\mathcal E_r}\nonumber\\
 &+\frac{Ct^2}{{(np)}^3}\sum_iD_i\ind{D_i\ge c_*{(np)}^3}.
 \label{restore:tail-energy-recursion}
\end{align}
Both energy terms use the same threshold $R$.

\medskip\noindent\textbf{Expectation bounds.}
Conditional on $A_{JJ}$, \eqref{restore:tail-innovation} is a sum
of independent centered vectors.  In each coordinate its summands
are bounded by $t$, and its total variance is at most
\[
 Cp\norm{W^{(-i)}f_t(U^{(-i)})}_F^2\le Cp.
\]
These bounds hold for every training graph, without conditioning on
$\mathcal E_r$.  Bennett's inequality in the fixed $K$ coordinates,
followed by integration of the squared-norm tail, gives
\begin{equation}
 \E\!\left[\norm{K_i}_2^2
                        \ind{\norm{K_i}_2>c'{np}t}\right]
 \le C{(np)}^2t^2e^{-c_1{np}\log R}.
 \label{restore:tail-innovation-moment}
\end{equation}
To see the exponent, set $v=Cp$ and
$h(u)=(1+u)\log(1+u)-u$.
At $x=c'{np}t$, the Bennett exponent
$(v/t^2)h(tx/v)$ is at least $c_1{np}\log R$, since
$tx/v$ is proportional to $R^2$.
Beyond that point the derivative of the exponent is at least
$c\log R/t$, which gives the same bound upon tail integration.

Every $D_i$ is a Bernoulli sum with mean at most $C{np}$.
Its Chernoff bound and tail integration yield
\begin{equation}
 \E[D_i\ind{D_i\ge c_*{(np)}^3}]
 \le C{(np)}^3e^{-c_2{(np)}^3\log {np}}.
 \label{restore:tail-large-degree-moment}
\end{equation}
The left side is zero if $c_*{(np)}^3>n$.
Taking expectations in \eqref{restore:tail-energy-recursion},
absorbing $C/(np)$, and using $nt^2=R^2$ now gives
\[
 \E[\mathcal T_R(U)\ind{\mathcal E_r}]
 \le CR^2\{e^{-c_1{np}\log R}+e^{-c_2{(np)}^3\log {np}}\}.
\]
For each fixed $R$, the second exponential is eventually bounded by
the first after reducing $c_1$.  This proves
\eqref{restore:tail-energy}.  The proof is uniform under deletion
of a fixed number of vertices and uses their recomputed weights,
so it also proves the asserted training-graph version.
The same proof applies to any event supplying the stated aggregate
geometry, including the exponentially accurate event of
\cref{restore:high-aggregate}. The innovation and degree-tail bounds
are unconditional before restriction to that event.
\end{proof}

\begin{corollary}[Exceptional coefficients are hit with exponentially small mass]
\label{restore:tail-hit-corollary}
On a training graph with the preceding cap, put
$H=G^{-1/2}UU^\top G^{1/2}$.
Let $f$ be any training-measurable vector satisfying
$\norm f_\infty\le C_f$, and let $\varepsilon=(H-I)f$.
For every prescribed fixed $s_0>0$, there is a fixed $L_\varepsilon<\infty$
such that, for any independent tested Bernoulli row with probabilities
$p_j\le Cp$,
\begin{equation}
 \E\!\left[\ind{\mathcal E_r}
                 \sum_{j:|\varepsilon_j|>L_\varepsilon}p_j\right]
 \le C_{s_0}e^{-s_0{np}}.
 \label{restore:tail-hit-mass}
\end{equation}
Here $\mathcal E_r$ is the training-measurable geometry event.
\end{corollary}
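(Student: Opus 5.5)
Split the coordinates of $\varepsilon=(H-I)f$ into a bounded part and a part carried by large spectral rows, and then use \cref{restore:tail-energy-lemma} to show that the tested row rarely meets the second part. Since $H=G^{-1/2}UU^\top G^{1/2}$, we have
\[
 \varepsilon_j=G_j^{-1/2}U_{j\cdot}\sum_{j'}U_{j'\cdot}^\top G_{j'}^{1/2}f_{j'}-f_j .
\]
By Cauchy--Schwarz, $\norm{\sum_{j'}U_{j'\cdot}^\top G_{j'}^{1/2}f_{j'}}_2\le C_f\sqrt K(\sum_{j'}G_{j'})^{1/2}\le C_rC_f\sqrt{n^2p}$ on $\mathcal E_r$, using $\sum_{j'}G_{j'}\le C_r n^2p$ from \cref{prop:sparse-floor}. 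Also $G_j\ge\tau\ge c\,np$. Together these give
\[
 |\varepsilon_j|\le C_f+C\,C_f\,\sqrt n\,\norm{U_{j\cdot}}_2 .
\]
So for a fixed $R$, every $j$ with $\sqrt n\norm{U_{j\cdot}}_2\le R$ satisfies $|\varepsilon_j|\le C_f(1+CR)$. I would set $L_\varepsilon=C_f(1+CR)$, with $R$ chosen below. Then $|\varepsilon_j|>L_\varepsilon$ forces $\sqrt n\norm{U_{j\cdot}}_2>R$. This step is deterministic on $\mathcal E_r$, uses only training-measurable quantities, and needs no independence between $f$ and the training graph.

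With $p_j\le Cp$ and $\ind{\sqrt n\norm{U_{j\cdot}}_2>R}\le n\norm{U_{j\cdot}}_2^2/R^2$ on that set, the hit mass is bounded by
\[
 \sum_{j:|\varepsilon_j|>L_\varepsilon}p_j\le Cp\sum_j\ind{\sqrt n\norm{U_{j\cdot}}_2>R}
 \le\frac{C\,np}{R^2}\sum_j\norm{U_{j\cdot}}_2^2\ind{\sqrt n\norm{U_{j\cdot}}_2>R}.
\]
Take expectations on $\mathcal E_r$ and apply \cref{restore:tail-energy-lemma}, in its training-graph version with the recomputed cap and the training-measurable geometry event. This gives
\[
 \E\Bigl[\ind{\mathcal E_r}\sum_{j:|\varepsilon_j|>L_\varepsilon}p_j\Bigr]\le C\,np\,e^{-c\,np\log R}.
\]
The tested row is independent of the training graph, so its probabilities $p_j$ are non-random; only the bound $p_j\le Cp$ is used.

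The remaining step, and the only real constraint, is the choice of $R$. Pick a fixed $R\ge R_0$ with $c\log R\ge s_0+1$. Then $C\,np\,e^{-c\,np\log R}\le C\,np\,e^{-(s_0+1)np}\le C_{s_0}e^{-s_0np}$, because $np\,e^{-np}$ is bounded. The fixed $R$ determines a fixed $L_\varepsilon$, which depends on $s_0$, $C_f$ and the structural constants. The one point to check is that the large-$n$ threshold in \cref{restore:tail-energy-lemma} is allowed to depend on the fixed $R$; the lemma permits this, so the bound holds for all sufficiently large $n$. Finitely many small $n$ can be absorbed into $C_{s_0}$, since the left side is at most $Cnp$.
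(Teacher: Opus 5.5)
Your proposal is correct and follows the paper's proof essentially step for step. It uses the same pointwise bound $|\varepsilon_j|\le C_f+CC_f\sqrt n\norm{U_{j\cdot}}_2$ from $G_j\ge c\,np$ and $\sum_jG_j\le Cn^2p$, the same threshold $L_\varepsilon=C_f+CC_fR$, the same Markov-type conversion of the exceptional count into the truncated energy of \cref{restore:tail-energy-lemma}, and the same choice of a fixed $R$ with $c\log R>s_0+1$ to absorb the factor $np$. Your remark about absorbing finitely many small $n$ into $C_{s_0}$ is a harmless addition that the paper leaves implicit.
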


\begin{proof}
On that event, $G_j\ge c{np}$ and $\sum_jG_j\le C{n^2p}$.
For a fixed constant $C$ determined by these two bounds,
\[
 |\varepsilon_j|
 \le C_f+\frac{\norm{U_{j\cdot}}_2}{\sqrt{G_j}}
                  \norm{U^\top G^{1/2}f}_2
 \le C_f+CC_f\sqrt n\norm{U_{j\cdot}}_2.
\]
For $L_\varepsilon=C_f+CC_fR$, the event
$|\varepsilon_j|>L_\varepsilon$ implies $\sqrt n\norm{U_{j\cdot}}_2>R$.
Thus \cref{restore:tail-energy-lemma} gives
\begin{equation}
 \E\!\left[\ind{\mathcal E_r}
                    \sum_j\ind{|\varepsilon_j|>L_\varepsilon}\right]
 \le\frac n{R^2}\E[\mathcal T_R(U)\ind{\mathcal E_r}]
 \le Cne^{-c{np}\log R}.
 \label{restore:tail-coefficient-count}
\end{equation}
Multiplication by $Cp$ bounds the expected hit mass by
$C{np} e^{-c{np}\log R}$.
Choose a fixed $R$ with $c\log R>s_0+1$ and absorb the factor ${np}$.
Independence is required only to interpret the conditional probability
that the tested row hits the training exceptional set; no independence
between the training geometry event and its eigenvectors is asserted.
\end{proof}

\begin{corollary}[Average exceptional target-row mass]
\label{restore:tail-target-corollary}
For every fixed degree cutoff $L_D$ and every fixed $s_0>0$, there is
a fixed $L_0$ such that
\begin{equation}
 \frac1n\sum_i\PP\!\left\{
    \sqrt{G_i}\norm{U_{i\cdot}}_2>L_0\sqrt{p},\quad
    D_i\le L_D{np},\quad \mathcal E_r\right\}
 \le C_{s_0,L_D}e^{-s_0{np}}.
 \label{restore:tail-target-mass}
\end{equation}
\end{corollary}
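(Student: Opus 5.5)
The plan is to reduce the target-row event to the truncated spectral-row energy of \cref{restore:tail-energy-lemma}, exactly as in \cref{restore:tail-hit-corollary}, but now for the tested row itself.

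\textbf{Step 1: convert to a coordinate threshold.} On $\mathcal E_r$ with $D_i\le L_D np$ we have $G_i=\max(D_i,\tau)\le (L_D+C)np$. Hence the event $\sqrt{G_i}\norm{U_{i\cdot}}_2>L_0\sqrt p$ implies
\[
 \sqrt n\norm{U_{i\cdot}}_2>R,\qquad R:=L_0/\sqrt{L_D+C}.
\]
This is the only place the degree cutoff is used. Without the cutoff, a large $G_i$ could inflate the weighted norm with no large unweighted row.

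\textbf{Step 2: average and apply the energy lemma.} Summing indicators over $i$ and using $\ind{\sqrt n\norm{U_{i\cdot}}_2>R}\le (n/R^2)\norm{U_{i\cdot}}_2^2\ind{\sqrt n\norm{U_{i\cdot}}_2>R}$ gives
\[
 \frac1n\sum_i\PP\{\cdots\}
 \le\frac1n\cdot\frac{n}{R^2}\,\E\bigl[\ind{\mathcal E_r}\mathcal T_R(U)\bigr]
 \le Ce^{-c\,np\log R}.
\]
The first inequality is a Markov-type count as in \eqref{restore:tail-coefficient-count}, and the second is \eqref{restore:tail-energy}. The step is legitimate because the events are all on the same full graph, so linearity of expectation interchanges $\sum_i$ and $\PP$ with no independence needed.

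\textbf{Step 3: choose constants.} We need $R\ge R_0$ and $c\log R\ge s_0$, so take $R=\max\{R_0,e^{s_0/c}\}$ and set $L_0=R\sqrt{L_D+C}$. Both choices depend only on $s_0$, $L_D$ and the structural constants. The sufficiently-large-$n$ proviso in \cref{restore:tail-energy-lemma} is absorbed into $C_{s_0,L_D}$, since for small $n$ the left side is at most $1\le Ce^{-s_0np}$ once the constant is enlarged, with $np$ bounded there.

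\textbf{Main obstacle.} The only delicate point is Step 1, namely bounding $G_i$ on the stated event. This requires $\tau\le C np$, which holds on $\mathcal E_r$ because $\mathcal E_r$ includes the two-sided total-degree event. The remaining steps are bookkeeping.
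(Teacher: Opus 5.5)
Your proposal is correct and matches the paper's proof essentially step for step. You bound $G_i\le C_{L_D}np$ on the event, convert the exception to $\sqrt n\norm{U_{i\cdot}}_2>R$ with $L_0=R\sqrt{C_{L_D}}$, bound the expected exceptional count by $(n/R^2)\,\E[\ind{\mathcal E_r}\mathcal T_R(U)]$ via \eqref{restore:tail-energy}, and choose $R$ so that $c\log R\ge s_0$. Your explicit treatment of the sufficiently-large-$n$ proviso is a harmless detail the paper leaves implicit.
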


\begin{proof}
On the displayed event, $G_i\le C_{L_D}{np}$.
Choose $L_0=R\sqrt{C_{L_D}}$.
The target-row exception then implies
$\sqrt n\norm{U_{i\cdot}}_2>R$.
Apply \eqref{restore:tail-energy} and divide its corresponding
expected count by $n$.  Choosing fixed $R$ sufficiently large gives
\eqref{restore:tail-target-mass}.
\end{proof}

\section{Local stability under deletion of the tested vertex}
\label{restore:loo}

The comparison below requires spectral row bounds only on neighbors
of the tested vertex. The cap threshold is recomputed after deletion.
We divide each capped adjacency matrix by its own scalar threshold
to write a generalized eigenvalue equation; this does not change its
retained eigenspace or the reconstruction $AH$.

\begin{lemma}[A local scalar comparison for capped adjacency]\label{restore:loo-scalar}
Put the tested vertex $i$ first and write $A=\left(\begin{smallmatrix}0&x^\top\\x&B\end{smallmatrix}\right)$,
$D_i=\sum_jx_j\le L_D{np}$, and let
$\Delta=\diag\{\max(D_{B,j},\tau_B)\}$ be recomputed from $B$.
Suppose
\[
 c{np}\le\Delta_j,\qquad \sum_j\Delta_j\le C{n^2p},
 \qquad Q_B=\Delta^{-1/2}B\Delta^{-1/2}
   =U_B(\Lambda_B/\tau_B)U_B^\top+N,
\]
where $\Lambda_B$ contains the retained eigenvalues of $\tau_BQ_B$,
$NU_B=0$, $\norm N_{\op}\le C/\sqrt {np}$, and the $K$ retained
eigenvalues of $Q_B$ have a fixed gap from zero. Let the full diagonal cap weight be
$G=\diag(G_i,\Delta+E)$, with
\[
 c{np}\le G_i\le C{np},\qquad E=E_\mathrm{loc}+E_\mathrm{glob},\qquad
 0\le E_\mathrm{loc}\le\diag(x),\quad
 \norm{E_\mathrm{glob}}_{\op}\le Cp.
\]
Here ${np}\to\infty$ and ${np}\le Cn$. Suppose only on hit coordinates that
\begin{equation}\label{restore:loo-hit}
 \norm{\Delta_j^{-1/2}(U_B)_{j\cdot}}_2
       \le \frac{R}{n\sqrt p}\quad\text{whenever }x_j=1.
\end{equation}
Let $U$ contain orthonormal informative eigenvectors of $\widetilde A=WAW$, and put
\[
 H=G^{-1/2}UU^\top G^{1/2},\qquad
 H_B=\Delta^{-1/2}U_BU_B^\top\Delta^{1/2}.
\]
For every $\norm w_\infty\le C_w$, uniformly over the stated inputs,
\begin{equation}\label{restore:loo-main}
 (AHw)_i=x^\top H_Bw_{-i}+O(1).
\end{equation}
Implicit constants may depend on $L_D,R,C_w$ and the other fixed bounds
in the statement. No bound is assumed on unhit rows of $U_B$ or on $\max_j\Delta_j$.
\end{lemma}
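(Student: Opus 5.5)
Write $Q=G^{-1/2}AG^{-1/2}=\widetilde A/\tau$. Since $H=G^{-1/2}\Pi G^{1/2}$ with $\Pi=UU^\top$, the target quantity is
\[
 (AHw)_i=\bigl(G^{1/2}Q\Pi G^{1/2}w\bigr)_i
 =\sqrt{G_i}\,\bigl(Q U U^\top G^{1/2}w\bigr)_i .
\]
The plan is to express the first row of $QU$ by the eigen-equation $QU=U(\Lambda/\tau)$ and compare it with the deleted quantities. Write $U=(u_i^\top;U_J)$. The $i$-th row of $QU$ equals $G_i^{-1/2}x^\top(\Delta+E)^{-1/2}U_J$. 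Hence
\[
 (AHw)_i=x^\top(\Delta+E)^{-1/2}U_JU^\top G^{1/2}w .
\]
This is exact, because the diagonal of $A$ is zero. The comparison target is $x^\top\Delta^{-1/2}U_BU_B^\top\Delta^{1/2}w_{-i}$. So three things must be shown, each modulo $O(1)$, and only along the hit coordinates of $x$:
\begin{itemize}
 \item the weight $(\Delta+E)^{-1/2}$ may be replaced by $\Delta^{-1/2}$;
 \item $U_J$ may be replaced by $U_BO$ for an orthogonal $O$;
 \item $U^\top G^{1/2}w$ may be replaced by $O^\top U_B^\top\Delta^{1/2}w_{-i}$.
\end{itemize}

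\textbf{Step 1 (weights).} Since $E_{\mathrm{loc}}\le\diag(x)$ and $\Delta_j\ge cnp$, we have on hit coordinates $|(\Delta_j+E_j)^{-1/2}-\Delta_j^{-1/2}|\le C\Delta_j^{-1/2}/(np)$. There are at most $L_Dnp$ hits. By \eqref{restore:loo-hit} each hit row of $\Delta^{-1/2}U_B$ has norm at most $R/(n\sqrt p)$. Also $\|U^\top G^{1/2}w\|\le C_w\sqrt{\sum G}\le C\sqrt{n^2p}$. Together these make the weight change contribute $O(1)$, once Step 2 lets us put $U_B$ in place of $U_J$ on hit rows.

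\textbf{Step 2 (eigenvectors along hit rows).} This is the main obstacle. The hit-row bound is assumed only for $U_B$, not for $U_J$, and the term must be controlled after multiplication by $x$. I would use the block eigen-equation
\[
 Q_JU_J+G_J^{-1/2}xG_i^{-1/2}u_i^\top=U_J\Lambda/\tau ,
\]
where $Q_J=(\Delta+E)^{-1/2}B(\Delta+E)^{-1/2}$ is a small perturbation of $Q_B$.

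First, project onto the complement of $U_B$, using $Q_B=U_B(\Lambda_B/\tau_B)U_B^\top+N$ with $\|N\|\le C/\sqrt{np}$ and the fixed gap. This gives
\[
 U_J=U_BO+\Delta^{-1/2}\,\mathcal R ,
\]
where $\mathcal R=(\Lambda/\tau-N)^{-1}$-type terms applied to the rank-one source $x u_i^\top G_i^{-1/2}$, plus the $E$-perturbation. The resolvent is bounded because the retained eigenvalues of $Q$ stay separated from $\|N\|$.

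Next, multiply by $x^\top\Delta^{-1/2}$. The source term produces $x^\top\Delta^{-1}x\,\|u_i\|G_i^{-1/2}$ and cross terms $x^\top\Delta^{-1/2}N\Delta^{-1/2}x$. The first is $\le (L_Dnp)/(cnp)\cdot\|u_i\|G_i^{-1/2}$. The cross terms are bounded by $\|N\|\cdot\|\Delta^{-1/2}x\|^2\le C/\sqrt{np}$. So the source contributes $O(1)\cdot G_i^{-1/2}\|u_i\|$ times the factor $\|U^\top G^{1/2}w\|$.

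The remaining worry is $\|u_i\|$. I would bound it from the first row of the eigen-equation: $\|u_i\|\le C\,G_i^{-1/2}\|x^\top\Delta^{-1/2}U_J\|$. This is self-consistent and yields
\[
 \sqrt{G_i}\|u_i\|\lesssim\sqrt p\,(1+\text{small}\cdot\sqrt{G_i}\|u_i\|),
\]
using \eqref{restore:loo-hit} on the $U_B$ part. Hence $G_i^{-1/2}\|u_i\|\cdot n\sqrt p=O(1)$ after absorption, since $\|U^\top G^{1/2}w\|\lesssim n\sqrt p$. I would first try to close this absorption directly and, if needed, iterate once.

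\textbf{Step 3 (the right factor).} Write
\[
 U^\top G^{1/2}w=u_i\sqrt{G_i}w_i+U_J^\top(\Delta+E)^{1/2}w_{-i}.
\]
The $u_i$ term is $O(\sqrt p)$, and it multiplies the hit row sum $\|x^\top\Delta^{-1/2}U_B\|\le L_Dnp\cdot R/(n\sqrt p)$, giving $O(\sqrt p\cdot\sqrt p\, np/\dots)=O(1)$. For the main part I would insert $U_J=U_BO+\Delta^{-1/2}\mathcal R$. Here the error is measured in Frobenius norm, which is $O(\sqrt{np}^{-1})$ by Davis--Kahan with the rank-one plus $E$ perturbation. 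Dotted against $\|(\Delta+E)^{1/2}w\|\lesssim n\sqrt p$ and the left factor of size $\sqrt{np}/(n\sqrt p)\cdot\dots$, this again gives $O(1)$. The $E^{1/2}$ correction is handled as in Step 1. Collecting the three steps yields \eqref{restore:loo-main}.
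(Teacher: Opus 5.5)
You have the paper's framework. The exact identity $(AHw)_i=x^\top(\Delta+E)^{-1/2}U_JU^\top G^{1/2}w$ is its row expansion. Your rank-one source and complementary resolvent are its top and complementary equations. Your self-consistent bound $\sqrt{G_i}\|u_i\|\lesssim\sqrt p$ is its $\|t\|\le C_R/(n\sqrt p)$.

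\textbf{The gap is in Step 3.} The target $x^\top H_Bw_{-i}$ is of order $np$. Its left factor $\|x^\top\Delta^{-1/2}U_B\|$ is of order $\sqrt p$; earlier in Step 3 you bound it by $L_Dnp\cdot R/(n\sqrt p)=L_DR\sqrt p$ yourself, and the hits add coherently, so this is generically sharp. Meanwhile $\|(\Delta+E)^{1/2}w_{-i}\|$ can be of order $n\sqrt p$. For your pairing to leave $O(1)$, the part of $U_J$ off the column span of $U_B$ must therefore be $O(1/(np))$ in unweighted Frobenius norm.

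Davis--Kahan with a perturbation of norm $(np)^{-1/2}$ gives only $(np)^{-1/2}$. Your remainder is then bounded only by $\sqrt p\cdot(np)^{-1/2}\cdot n\sqrt p=\sqrt{np}\to\infty$. The left-factor size $\sqrt{np}/(n\sqrt p)=n^{-1/2}$ you use is too small by exactly $\sqrt{np}$. If that factor comes from the $\Delta^{-1/2}$ in $U_J-U_BO=\Delta^{-1/2}\mathcal R$, it cannot be counted: Davis--Kahan bounds $\|\Delta^{-1/2}\mathcal R\|_F$, not $\|\mathcal R\|_F$, and $\max_j\Delta_j$ is unbounded. Even the one-sided bound via $\|(Q-Q_0)U_0\|\lesssim n^{-1/2}$ leaves $p\sqrt n$, which the lemma's hypotheses do not make bounded.

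\textbf{The missing ingredient is the sharp off-span estimate.} Your Step 2 resolvent already contains it, if you bound its norm rather than only its pairing with $x$. In the paper's coordinates $G^{-1/2}u_r=(t_r;\Delta^{-1/2}(U_By_r+u_r^\perp))$ with $U_B^\top u_r^\perp=0$, the complementary equation is driven only by two terms:
\begin{itemize}
\item $t_r\Pi_B^\perp h$, with $h=\Delta^{-1/2}x$;
\item $\mu_r\Pi_B^\perp E_\Delta U_By_r$, with $E_\Delta=\Delta^{-1/2}E\Delta^{-1/2}$.
\end{itemize}
You have $\|t\|\lesssim1/(n\sqrt p)$ and $\|h\|^2\lesssim D_i/(np)$. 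Also $\|E_\Delta U_B\|_F\lesssim\sqrt{D_i}/(np\sqrt n)+1/n$: use the hit rows for $E_{\mathrm{loc}}$, and $\|E_{\mathrm{glob}}\|/\min_j\Delta_j\le C/n$ for the global part. The global part is not supported on hits, so it is not covered ``as in Step 1''. Together these give $\|U_\perp\|\lesssim\sqrt{D_i}/(np\sqrt n)+1/n$, well below $1/(np)$. This bound, not Davis--Kahan, must be used in the right factor.

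\textbf{Second, the in-span coefficient $Y$ is not an exactly orthogonal $O$.} The leading term is $x^\top\Delta^{-1/2}U_BYY^\top U_B^\top\Delta^{1/2}w_{-i}$, so you need $\|YY^\top-I\|\lesssim1/(np)$. Projecting onto the complement of $U_B$ says nothing about this. The paper gets $\|YY^\top-I\|\lesssim D_i/(n^2p)+1/n$ from exact generalized orthonormality, $Y^\top Y+U_\perp^\top U_\perp+G_itt^\top+f^\top Ef=I$, where $f$ is the $J$-block of $G^{-1/2}U$. A second-order $\sin\Theta$ bound would also do.

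With these two estimates your three steps close. Working on the $\Delta$ scale with $G^{-1/2}U$ from the start, as the paper does, also removes the need for your separate weight-replacement step.
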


\begin{proof}
For $D_i=0$ both row actions vanish. For $D_i>0$, put
$E_\Delta=\Delta^{-1/2}E\Delta^{-1/2}$, $h=\Delta^{-1/2}x$,
$\Pi_B^\perp=I-U_BU_B^\top$. Then $\norm{E_\Delta}\le C/(np)$ and
$\norm h^2\le CD_i/(np)$. The scalar-rescaled matrix $Q=G^{-1/2}AG^{-1/2}$ differs from
$\diag(0,Q_B)$ by $O({(np)}^{-1/2})$: its off-diagonal row has norm
at most $C\sqrt{D_i/(G_i{np})}$, and the lower renormalization has norm
$O({(np)}^{-1})$. Thus the full signal remains separated.

For an eigenvalue $\mu_r$ of $Q=\widetilde A/\tau_A$, write a full
generalized eigenvector as
\[
 G^{-1/2}u_r=\binom{t_r}{\Delta^{-1/2}(U_By_r+u_r^\perp)},
 \qquad U_B^\top u_r^\perp=0.
\]
The exact top and complementary equations are
\begin{align*}
 h^\top(U_By_r+u_r^\perp)&=\mu_r G_i t_r,\\
 (\mu_rI-N+\mu_r\Pi_B^\perp E_\Delta \Pi_B^\perp)u_r^\perp
   &=t_r\Pi_B^\perp h-\mu_r\Pi_B^\perp E_\Delta U_By_r.
\end{align*}
The inverse $T_r$ of the complementary operator is bounded. Therefore
\[
 (\mu_r G_i-(\Pi_B^\perp h)^\top T_r\Pi_B^\perp h)t_r
  =(U_B^\top h)^\top y_r
       -\mu_r(\Pi_B^\perp h)^\top T_r\Pi_B^\perp E_\Delta U_By_r.
\]
The denominator has magnitude at least $c{np}-CD_i/(np)\ge c'{np}$.
Here $G_i=\max(D_i,\tau_A)\asymp {np}$ is the diagonal weight in this formula.
The matrix $Y=(y_1,\ldots,y_K)$ and its inverse have bounded norm,
by the signal-space angle and $(I+E_\Delta)^{1/2}=I+O({(np)}^{-1})$.

The local row premise and the global threshold perturbation give
\[
 \norm{U_B^\top h}\le \frac{C_RD_i}{n\sqrt p},\qquad
 \norm{E_\Delta U_B}_F\le C_R\sqrt{D_i}/({np}\sqrt n)+C/n.
\]
The $C/n$ term is retained; it need not be supported on the neighbors.
Substitution yields, for $U_\perp=(u_1^\perp,\ldots,u_K^\perp)$,
\begin{equation}\label{restore:loo-refined}
 \norm t\le \frac{C_R}{n\sqrt p},\qquad
 \norm{U_\perp}\le C_R\{\sqrt{D_i}/({np}\sqrt n)+1/n\}.
\end{equation}
Let $V=\Delta^{-1/2}U_B$, $v_r=\Delta^{-1/2}u_r^\perp$ and
$f_r=Vy_r+v_r$. Write $t=(t_1,\ldots,t_K)^\top$, 
$v=(v_1,\ldots,v_K)$ and $f=(f_1,\ldots,f_K)$. Exact generalized orthonormality gives
\[
 Y^\top Y+U_\perp^\top U_\perp+G_i tt^\top+f^\top Ef=I.
\]
On the local part of $E$, use the hit-row bound; on its global part
use $\norm{E_\mathrm{glob}}\le Cp$ and
$\norm f\le C/\sqrt {np}$. Consequently
\begin{equation}\label{restore:loo-gram}
 \norm{YY^\top-I}\le C_R\{D_i/({n^2p})+1/n\}.
\end{equation}

For $w_V=V^\top\Delta w_{-i}$, the exact row expansion is
\[
 (AHw)_i=
 \sum_r(x^\top Vy_r+x^\top v_r)
 \{G_i t_rw_i+y_r^\top w_V+v_r^\top\Delta w_{-i}
                                      +f_r^\top Ew_{-i}\}.
\]
Only total degree is needed to bound global coefficient vectors. In
particular,
\begin{align*}
 \norm{x^\top V}&\le \frac{C_RD_i}{n\sqrt p},&
 \norm{w_V}&\le Cn\sqrt p,\\
 \norm{x^\top v}&\le C_R\{D_i/({(np)}^{3/2}\sqrt n)+\sqrt{D_i}/(n\sqrt {np})\},&
 \norm{v^\top\Delta w_{-i}}&\le C_R\{\sqrt{D_i/(np)}+\sqrt{p}\},\\
 \norm{G_i t}&\le C_R\sqrt{p},&
 \norm{f^\top Ew_{-i}}&\le C_R\{\frac{D_i}{n\sqrt p}+\sqrt{p}\}.
\end{align*}
For example the global part of the last bound follows from
$|f_r^\top E_\mathrm{glob}w_{-i}|
\le (Cp)C_w\sqrt n\norm{f_r}\le C\sqrt{p}$.
The leading term is $x^\top VYY^\top w_V$; its error relative to
$x^\top VV^\top\Delta w_{-i}$ is
$O(D_i^2/({n^2p})+D_i/n)$. Every other product is bounded by a fixed
multiple of
\[
 D_i/(np)+\sqrt{D_i/n}+D_i^2/({n^2p})+D_i/n+D_i\sqrt{D_i}/({np}\sqrt n).
\]
These quantities are bounded for $D_i\le L_D{np}$ and ${np}\le Cn$.
This proves the scalar comparison.
\end{proof}

For the empirical cap threshold $\tau=b_0\bar D$, the displayed decomposition of
$E$ is automatic on the two-sided total-degree event. Indeed, if
$\bar D_B$ is the mean degree of $B$, then
\[
 \tau_A-\tau_B=b_0(2D_i-\bar D_B)/n.
\]
First change each deleted degree by $x_j$ while keeping $\tau_B$;
this yields $E_\mathrm{loc}\in[0,\diag(x)]$. Then change the threshold;
this yields $\norm{E_\mathrm{glob}}\le C_{L_D}p$.

\begin{corollary}[Exact-degree reconstruction and simple hollowing]
\label{restore:loo-corrected}
Let $\widehat A^{\mathrm{c}}$ be the exact-degree hollow reconstruction in
\eqref{restore:reconstruction}. Then $\widehat A^{\mathrm{c}}\one=A\one$ and
$\diag(\widehat A^{\mathrm{c}})=0$ exactly.
For $\bar w_{-i}=(n-1)^{-1}\sum_{j\ne i}w_j$, the same local premises
give
\begin{equation}\label{restore:loo-corrected-action}
 (\widehat A^{\mathrm{c}} w)_i
 =x^\top\{H_B(w_{-i}-\bar w_{-i}\one)+\bar w_{-i}\one\}
       +O(1).
\end{equation}
\end{corollary}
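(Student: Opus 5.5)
The two exact identities come straight from definition \eqref{restore:reconstruction}. For the first, summing row $i$ over $j\ne i$ gives
\[
 \sum_{j\ne i}\widehat A_{ij}+(n-1)\frac{D_i-\sum_{j'\ne i}\widehat A_{ij'}}{n-1}=D_i,
\]
and $D_i=(A\one)_i$. The second holds because $\widehat A^{\mathrm c}_{ii}=0$ by definition. The real task is the row action. I will expand it as
\[
 (\widehat A^{\mathrm c}w)_i=\sum_{j\ne i}\widehat A_{ij}w_j+\Bigl(D_i-\sum_{j\ne i}\widehat A_{ij}\Bigr)\bar w_{-i}.
\]
Rewriting, this equals $\sum_{j\ne i}\widehat A_{ij}(w_j-\bar w_{-i})+D_i\bar w_{-i}$.

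\emph{Step 1.} Since $D_i\bar w_{-i}=x^\top(\bar w_{-i}\one)$, this term already matches the constant part of \eqref{restore:loo-corrected-action} exactly. It therefore remains to show that the centered part equals $x^\top H_B(w_{-i}-\bar w_{-i}\one)+O(1)$.

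\emph{Step 2.} Define $u\in\mathbb R^n$ by $u_j=w_j-\bar w_{-i}$ for $j\ne i$ and $u_i=0$. Then $\norm u_\infty\le 2C_w$, and
\[
 \sum_{j\ne i}\widehat A_{ij}(w_j-\bar w_{-i})=(\widehat Au)_i-\widehat A_{ii}u_i=(\widehat Au)_i .
\]
Using $\widehat A=AH$ from \cref{prop:sparse-floor}, this is $(AHu)_i$. Applying \cref{restore:loo-scalar} with the bounded vector $u$ gives $(AHu)_i=x^\top H_Bu_{-i}+O(1)$, and $u_{-i}=w_{-i}-\bar w_{-i}\one$, as required.

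\emph{Where care is needed.} The one point to check is that removing the diagonal entry $\widehat A_{ii}$ is genuinely free. In the argument above it costs nothing, because $u_i=0$ by construction. If instead one centered $w$ including coordinate $i$, a term $\widehat A_{ii}(w_i-\bar w_{-i})$ would appear. That term could be bounded by $|(AHe_i)_i|\le\norm{x^\top V}\,G_i\norm t$, which is bounded by the displays in the proof of \cref{restore:loo-scalar}. The choice $u_i=0$ avoids needing that bound. All constants are inherited from \cref{restore:loo-scalar} with $C_w$ replaced by $2C_w$, so the $O(1)$ is uniform under the same premises.
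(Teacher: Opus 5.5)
Your proposal is correct and follows the paper's proof: you use the same exact row identity that splits off $D_i\bar w_{-i}=x^\top(\bar w_{-i}\one)$, then apply \cref{restore:loo-scalar} through $\widehat A=AH$ to a centered bounded vector. The only difference is that the paper centers all of $w$ and pays the leftover diagonal term $\widehat A_{ii}(w_i-\bar w_{-i})$ using $|\widehat A_{ii}|\le C_Rp$ from \eqref{restore:loo-refined}, whereas your choice $u_i=0$ makes that term vanish; this is a legitimate minor simplification, since the lemma holds for every bounded vector.
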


\begin{proof}
The definition gives the exact row identity
\[
 (\widehat A^{\mathrm{c}} w)_i
 =[\widehat A(w-\bar w_{-i}\one)]_i+D_i\bar w_{-i}
       -\widehat A_{ii}(w_i-\bar w_{-i}).
\]
Apply the preceding lemma to $w-\bar w_{-i}\one$, whose entries
are bounded by $2C_w$. Equation~\eqref{restore:loo-refined} gives
$\norm{U_{i\cdot}}_2^2=G_i\norm t^2\le C_R/n$, so
$|\widehat A_{ii}|\le C_Rp$. The final term is therefore $O(p)=O(1)$,
proving \eqref{restore:loo-corrected-action}. The degree and diagonal
identities follow directly from \eqref{restore:reconstruction}.
\end{proof}

The independent coefficient vector in
\eqref{restore:loo-corrected-action} has perturbation
$(H_B-I)(w_{-i}-\bar w_{-i}\one)$. For block-constant contrasts it
therefore admits the tilted cancellation in
\cref{lem:sbm-floor-cancellation}.
For clarity, the hit-row exception is paid directly. On the
training event, $\Delta_j\ge c {np}$, so
\[
 \{j:\norm{\Delta_j^{-1/2}(U_B)_{j\cdot}}_2
                          >\frac{R}{n\sqrt p}\}
 \subseteq
 \{j:\sqrt n\norm{(U_B)_{j\cdot}}_2>R\sqrt c\}.
\]
\Cref{restore:tail-energy-lemma} bounds the expected cardinality of
this set on the training event by $Cn e^{-c'{np}\log R}$.
Conditioning only on the deleted graph and using $P_{z_i z_j}\le p$
therefore bounds the probability that the tested row hits it by
$C{np} e^{-c'{np}\log R}$. Choosing a sufficiently large fixed $R$
gives any prescribed fixed exponential rate $s$.
The upper-degree exception $D_i>L_D{np}$ has arbitrarily large exponential
rate after increasing the fixed $L_D$. Neither bound uses a union over
all possible tested targets.

Uniformity in $w$ is an algebraic stability statement. It does not make
full-graph fitted coefficients independent of $x$. The fitted-score
bound first replaces them by deterministic true contrasts; only then
is the independent sharp-tail bound applied.

\section{Block fitting and misclassification control}
\label{restore:fit}

Write $\Pi=UU^\top$. For the
proof, rescale the encoded adjacency matrix by its common threshold:
$Q=\widetilde A/\tau=G^{-1/2}AG^{-1/2}$. This changes the eigenvalue scale,
not the retained eigenspace. We work on the aggregate event
\begin{align}
 \norm{Q-G^{-1/2}{ZPZ^\top} G^{-1/2}}_{\op}
   &\le C/\sqrt {np},& \sum_iG_i&\le C{n^2p},
 \label{restore:fit-geometry-one}\\
 \sum_i\frac{(G_i-\tau)^2+(D_i-\sum_jP_{z_i z_j})^2}{G_i}
   &\le Cn,&
 \norm{(I-\Pi)G^{1/2}Z}_F&\le C\sqrt n,
 \label{restore:fit-geometry-two}\\
 \max_{k,\ell}\left|[Z^\top(A-{ZPZ^\top})Z]_{k\ell}\right|
   &\le Cn.&&\label{restore:fit-block-event}
\end{align}
We also have $G_i\ge\tau\asymp {np}$, $\sum_jP_{z_i z_j}\asymp {np}$,
$\bar D\asymp {np}$, and $\tau\ge\max_i\sum_jP_{z_i z_j}$. The first two
lines follow from \cref{prop:sparse-floor}; the block-total event is
paid in the risk proof. All bounds in this section are deterministic
on this event. Constants depend only on the fixed structural and
aggregate-event bounds and, for local estimates, on the fixed thresholds
$L_D,L_U$. They do not depend on the fitted partition. As in the
parameter-estimation and classification arguments of \citet{zhouli}, we display the dependence on misclassification rather
than absorbing it into a separately named error-radius constant.
Align the current labels once with the true labels. Then the incorrectly
assigned set $E=\{i:\widehat z_i\ne z_i\}$ has
$|E|=n\Mis(\widehat z,z)$. The iteration argument below preserves this same label matching.
The matching may depend on the entire graph. Every deterministic bound
below holds simultaneously over the aligned partitions; no probability
calculation conditions on the matching or on the estimated partition.
After relabeling, a tied update is still a maximizer, which suffices for
the non-strict pairwise error inclusion.
For current labels $\widehat z$, let
$\widehat Z_{ik}=\ind{\widehat z_i=k}$, and use
$n_k(\widehat z)$ for their community sizes.

An aligned population basis is
\[
 V=G^{-1/2}Z(Z^\top G^{-1}Z)^{-1/2}O,
\]
where $O$ is orthogonal. The signal gap gives
\begin{equation}\label{restore:fit-basis}
 \norm{U-V}_F\le C/\sqrt {np},\qquad
 \sqrt{G_i}\norm{V_{i\cdot}}_2\le C\sqrt{p}.
\end{equation}
Moreover $\norm Q_{\op}\le1$, since
$|x^\top Ax|\le\sum_iD_ix_i^2\le\sum_iG_ix_i^2$.
In particular $\norm{\Lambda/\tau}_{\op}\le1$, and
$\widehat A=G^{1/2}Q_KG^{1/2}$ for $Q_K=U(\Lambda/\tau)U^\top$.

For the row bounds in this section, put
$a_i=\sqrt{G_i}\norm{U_{i\cdot}}_2$.

\begin{lemma}[Weighted mass of incorrectly assigned vertices]
\label{restore:fit-wrong-mass}
For every set $E\subseteq[n]$,
\begin{equation}\label{restore:fit-wrong-mass-bounds}
 \sum_{i\in E}G_i\le C(n+np|E|),\qquad
 \sum_{i\in E}a_i\le C\left(|E|\sqrt p+\frac1{\sqrt p}\right).
\end{equation}
Also $\sum_i a_i\le Cn\sqrt p$. These bounds hold simultaneously
for all sets; in particular, the error set has
$|E|=n\Mis(\widehat z,z)$ under the common label matching.
\end{lemma}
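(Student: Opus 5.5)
The proof is deterministic on the aggregate event \eqref{restore:fit-geometry-one}--\eqref{restore:fit-geometry-two}. It uses only the degree-energy bound, the fact that $\tau\asymp np$, and the basis comparison \eqref{restore:fit-basis}. Nothing depends on how $E$ is chosen, so the bounds hold simultaneously for all sets, and in particular for the graph-dependent error set.

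\emph{Weighted mass.} Write $G_i=\tau+(G_i-\tau)$, where $G_i-\tau\ge0$ and $G_i-\tau=0$ unless $D_i>\tau$. I split the vertices with $G_i>\tau$ into two cases.
\begin{itemize}
\item If $G_i\ge2\tau$, then $G_i-\tau\ge G_i/2$, so $G_i-\tau\le2(G_i-\tau)^2/G_i$.
\item If $\tau<G_i<2\tau$, then $G_i-\tau\le\tau$.
\end{itemize}
Summing over $i\in E$ gives
\[
 \sum_{i\in E}G_i\le 2\tau|E|+2\sum_i\frac{(G_i-\tau)^2}{G_i}\le Cnp|E|+Cn .
\]
The last step uses $\tau\asymp np$ and the degree-energy sum in \eqref{restore:fit-geometry-two}. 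This is the first bound in \eqref{restore:fit-wrong-mass-bounds}.

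\emph{Row mass.} Decompose $U=V+(U-V)$ with the aligned population basis $V$ of \eqref{restore:fit-basis}. The population part satisfies $\sqrt{G_i}\norm{V_{i\cdot}}_2\le C\sqrt p$, so it contributes at most $C|E|\sqrt p$. For the perturbation part, Cauchy--Schwarz, the weighted-mass bound just proved, and $\norm{U-V}_F\le C/\sqrt{np}$ give
\[
 \sum_{i\in E}\sqrt{G_i}\,\norm{(U-V)_{i\cdot}}_2
 \le\Bigl(\sum_{i\in E}G_i\Bigr)^{1/2}\norm{U-V}_F
 \le C\frac{(n+np|E|)^{1/2}}{\sqrt{np}}
 \le C\Bigl(\frac1{\sqrt p}+\sqrt{|E|}\Bigr).
\]
The arithmetic--geometric mean inequality gives $\sqrt{|E|}=\sqrt{|E|\sqrt p\cdot p^{-1/2}}\le\frac12\bigl(|E|\sqrt p+p^{-1/2}\bigr)$. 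This absorbs the middle term and yields the second bound in \eqref{restore:fit-wrong-mass-bounds}.

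\emph{Total mass.} Taking $E=[n]$ gives $\sum_ia_i\le C(n\sqrt p+p^{-1/2})$. Since $np\to\infty$, we have $p^{-1/2}\le n\sqrt p$, so $\sum_ia_i\le Cn\sqrt p$.

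No step is a real obstacle. The only point needing care is the first bound, where $\sum_{i\in E}(G_i-\tau)$ must be controlled without a maximum-degree event. The two-case split above converts it into the degree energy, which costs at most $Cn$ independently of $E$.
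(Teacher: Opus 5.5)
Your proof is correct and is essentially the paper's argument. The paper uses the single inequality $G_i\le2\tau+(G_i-\tau)^2/G_i$ where you use a two-case split, and it makes the same $U=V+(U-V)$ decomposition with Cauchy--Schwarz; the only cosmetic difference is that it gets the total bound directly from $(\sum_iG_i)^{1/2}\norm{U}_F$ rather than by taking $E=[n]$ and using $np\ge1$.
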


\begin{proof}
The inequality $G_i\le2\tau+(G_i-\tau)^2/G_i$ and the aggregate
energy bound give $\sum_EG_i\le C(np|E|+n)$. By
\eqref{restore:fit-basis} and Cauchy--Schwarz,
\[
 \sum_{i\in E}a_i
 \le C|E|\sqrt p+
      \left(\sum_{i\in E}G_i\right)^{1/2}\norm{U-V}_F
 \le C\left(|E|\sqrt p+\sqrt{|E|+p^{-1}}\right).
\]
The inequality $\sqrt{p|E|+1}\le p|E|+1$ yields the second bound.
The global bound follows from
$\sum_i a_i\le(\sum_iG_i)^{1/2}\norm U_F$ and fixed $K$.
\end{proof}

\begin{lemma}[Normalized spectral geometry]\label{lem:normalized-geometry}
On the aggregate event, let $x_i$ be the normalized spectral rows used
in Section~\ref{sec:initialization}. The orthogonal matrix $O$ in
\eqref{restore:fit-basis} satisfies
\[
 \sum_i\norm{x_i-O_{z_i\cdot}}_2^2\lesssim \frac1p,
 \qquad \norm{O_{k\cdot}-O_{\ell\cdot}}_2=\sqrt2\quad(k\ne\ell).
\]
\end{lemma}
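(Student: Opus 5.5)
The plan is to compare the normalized rows $x_i=U_{i\cdot}/\norm{U_{i\cdot}}_2$ with the normalized population rows. First I will identify the population rows exactly. Since $V=G^{-1/2}Z(Z^\top G^{-1}Z)^{-1/2}O$ and $J=Z^\top G^{-1}Z$ is diagonal with $J_{kk}=\sum_{i:z_i=k}G_i^{-1}$, the $i$-th row is
\[
V_{i\cdot}=G_i^{-1/2}J_{z_iz_i}^{-1/2}O_{z_i\cdot}.
\]
This is a positive multiple of $O_{z_i\cdot}$. So $V_{i\cdot}/\norm{V_{i\cdot}}_2=O_{z_i\cdot}$ exactly, because $O$ is orthogonal and its rows are unit vectors. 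The separation claim then follows at once: distinct rows of an orthogonal matrix are orthonormal, so $\norm{O_{k\cdot}-O_{\ell\cdot}}_2^2=2$.

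For the error sum I will use the elementary inequality, valid for any nonzero $v$ and any $u$,
\[
\Bigl\|\frac{u}{\norm u_2}-\frac{v}{\norm v_2}\Bigr\|_2\le\frac{2\norm{u-v}_2}{\norm v_2},
\]
together with the trivial bound $\norm{x_i-O_{z_i\cdot}}_2\le2$. The bound $2$ also covers zero rows of $U$, where $x_i$ is a fixed unit vector. Both give $\norm{x_i-O_{z_i\cdot}}_2^2\le 4\norm{U_{i\cdot}-V_{i\cdot}}_2^2/\norm{V_{i\cdot}}_2^2$, so it remains to lower bound $\norm{V_{i\cdot}}_2^2=G_i^{-1}J_{z_iz_i}^{-1}$. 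From the proof of \cref{prop:sparse-floor}, $J_{kk}\le C/p$, hence $\norm{V_{i\cdot}}_2^2\ge c\,p/G_i$.

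The main obstacle is that $G_i$ is not uniformly bounded. High-degree vertices have small population rows, so the relative error can blow up there. I will therefore split the vertices at the threshold $G_i\le L\,np$ with a fixed $L$.

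\emph{Moderate vertices} ($G_i\le L\,np$). Here $\norm{V_{i\cdot}}_2^2\ge c/(Ln)$, so
\[
\sum_{G_i\le Lnp}\norm{x_i-O_{z_i\cdot}}_2^2\le C_L\,n\norm{U-V}_F^2\le C_L\,n/(np)=C_L/p,
\]
using \eqref{restore:fit-basis}.

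\emph{High-degree vertices} ($G_i>L\,np$). Each such vertex contributes at most $4$, so I only need to count them. I take $L\ge2b_0'$, where $b_0'$ is the constant with $\tau\le b_0'\,np$. Then $G_i>L\,np\ge2\tau$ gives $G_i-\tau\ge G_i/2$, hence $(G_i-\tau)^2/G_i\ge G_i/4\ge L\,np/4$. The degree-energy bound \eqref{restore:fit-geometry-two} then gives
\[
\#\{i:G_i>L\,np\}\le \frac{4Cn}{L\,np}=\frac{C'}{p}.
\]
Their total contribution is therefore at most $4C'/p$.

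Adding the two parts gives $\sum_i\norm{x_i-O_{z_i\cdot}}_2^2\lesssim 1/p$. All the inputs are the deterministic bounds of the aggregate event, and constants depend only on the structural class.
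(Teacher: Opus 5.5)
Your proposal is correct and follows essentially the same argument as the paper. The normalized population rows are identified as the rows of $O$; the normalization inequality with $\norm{V_{i\cdot}}_2\gtrsim n^{-1/2}$ handles the vertices with $G_i\lesssim np$; and the degree-energy bound shows there are $O(1/p)$ high-weight vertices, each contributing at most four. Your explicit row formula via the diagonal $J$, and your check that the zero-row convention still satisfies the factor-two bound, make explicit details that the paper leaves implicit.
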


\begin{proof}
The normalized rows of $V$ are the $K$ rows of $O$, with pairwise
distance $\sqrt2$. Choose a fixed $B_0$ larger than twice the
uniform upper bound on $\tau/(np)$. Each row with $G_i>B_0{np}$
contributes at least $B_0{np}/4$ to $\sum_i(G_i-\tau)^2/G_i$, so
there are at most $\frac{C}{p}$ such rows. For every remaining row,
$\norm{V_{i\cdot}}_2\ge c/\sqrt n$. The normalization inequality
\[
 \left\|\frac u{\norm u_2}-\frac v{\norm v_2}\right\|_2
 \le\frac{2\norm{u-v}_2}{\norm v_2}
\]
(with any fixed unit direction when $u=0$) and
\eqref{restore:fit-basis} imply that the normalized empirical rows
$x_i$ have total squared distance at most $\frac{C}{p}$ from their true
centers $O_{z_i\cdot}$: the remaining rows satisfy this by the displayed
inequality, and the $O(\frac{1}{p})$ high-degree rows each contribute at most four.

\end{proof}

\begin{lemma}[$k$-means++ accuracy conditional on the graph]
\label{lem:kmpp-initializer}
On the aggregate event there is a fixed $C_0<\infty$ such that the
row-normalized $k$-means++ initialization of
Section~\ref{sec:initialization}, with $m\ge1$ independent restarts,
satisfies
\begin{equation}\label{eq:kmpp-conditional}
 \PP\!\left\{\Mis(\widehat z^{(0)},z)>\frac{C_0}{np}
                  \,\middle|\,A\right\}\le 2^{-m}
\end{equation}
for all sufficiently large $n$. Only the independent seeding randomness
is averaged in this conditional probability. The result still holds
with a graph-measurable restart count $m=m(A)$ and the permitted
nonincreasing Lloyd steps.
\end{lemma}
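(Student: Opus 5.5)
The plan is to condition on $A$ and combine \cref{lem:normalized-geometry} with the $O(\log K)$ expected-approximation guarantee of \citet{arthur2007}. A Markov step turns that expectation into a constant success probability for a single run, a deterministic argument turns a constant-factor fit into misclassification, and independent restarts drive the failure probability down geometrically.

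\textbf{Reference cost.} Fix the graph on the aggregate event and write $\phi^*$ for the optimal $K$-means cost of the points $x_1,\ldots,x_n$. Using the true centers $O_{z_i\cdot}$ as a feasible solution, \cref{lem:normalized-geometry} gives $\phi^*\le C_g/p$.

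\textbf{One run.} The seeding of \citet{arthur2007} satisfies $\E[\phi\mid A]\le 8(\log K+2)\phi^*$. The permitted Lloyd steps are nonincreasing, so they preserve this bound. By Markov's inequality, one run has cost at most $M/p$ with conditional probability at least $1/2$, where $M=16(\log K+2)C_g$ is fixed. Runs are conditionally independent given $A$, so with $m$ restarts the probability that every run costs more than $M/p$ is at most $2^{-m}$. The retained run minimizes the objective, so on the complement its cost is at most $M/p$. If $m=m(A)$ is graph-measurable, the same computation holds conditionally on $A$.

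\textbf{From low cost to misclassification.} This deterministic step is the main obstacle: I need that any partition with centers $\mu_1,\ldots,\mu_K$ and cost at most $M/p$ misclassifies at most $C_0/(np)$ of the vertices after one permutation. I would argue as follows.
\begin{itemize}
\item Call vertex $i$ \emph{good} if $\norm{x_i-O_{z_i\cdot}}_2\le \delta$ for a small fixed $\delta$, say $\delta=1/8$. By \cref{lem:normalized-geometry}, all but $C/(\delta^2 p)$ vertices are good.
\item Each true community has at least $n/(\beta K)$ vertices, which is far more than $1/p$ because $np\to\infty$. So most good vertices of community $k$ also lie within $\delta$ of their assigned center. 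Otherwise the cost would be at least of order $n\delta^2\gg M/p$.
\item Hence each $O_{k\cdot}$ has a fitted center within $2\delta$. Since the rows of $O$ are pairwise $\sqrt2$ apart, these nearby centers are distinct, which gives a matching $\pi$ and shows that no fitted group is empty.
\item Any good vertex assigned to a center other than $\pi(z_i)$ is at distance at least $\sqrt2-3\delta$ from that center, so it contributes a fixed positive amount to the cost. There are therefore at most $C M/p$ such vertices.
\end{itemize}
Adding the bad vertices, the number of misclassified vertices is $O(1/p)$, that is, $\Mis\le C_0/(np)$.

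The most delicate point is the matching: I must make sure no single fitted center serves two true communities while another center sits idle. Balanced community sizes together with the cost budget rule this out, which is why the statement needs "sufficiently large $n$", namely $np\gg1$ relative to the fixed constants. The rules for tie-breaking, zero rows, and empty clusters only affect points already counted in the cost, so they do not change these bounds.
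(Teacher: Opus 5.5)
Your proposal is correct and follows essentially the same route as the paper. Both arguments bound $\Phi_{\mathrm{opt}}\lesssim 1/p$ using the true centers from \cref{lem:normalized-geometry}, apply the Arthur--Vassilvitskii expectation bound (preserved by the nonincreasing Lloyd steps), use Markov's inequality for per-run success probability $1/2$ and conditional independence of restarts for $2^{-m}$, and finish with a deterministic low-cost-to-labels conversion based on disjoint balls around the $\sqrt2$-separated centers. The differences are cosmetic. The paper does the conversion in one step, bounding $\sum_i\norm{\mu_{\widehat z^{(0)}_i}-O_{z_i\cdot}}_2^2\le 2\Phi+2\sum_i\norm{x_i-O_{z_i\cdot}}_2^2\lesssim 1/p$ and using balls of radius $\sqrt2/3$, and it treats $\Phi_{\mathrm{opt}}=0$ separately. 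Your good-vertex count and fixed threshold $M/p$ reach the same conclusion and avoid that separate case.
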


\begin{proof}
For fixed normalized points, write $\Phi$ for the within-cluster sum
of squared distances and $\Phi_{\mathrm{opt}}$ for its minimum over
$K$ centers. These are local proof quantities, not new model parameters.
The true centers in \cref{lem:normalized-geometry} give
$\Phi_{\mathrm{opt}}\lesssim 1/p$. The seeding theorem of
\citet[Theorem 1.1]{arthur2007} states
\[
 \E(\Phi_{\mathrm{seed}}\mid A)
       \le 8(\log K+2)\Phi_{\mathrm{opt}}.
\]
The stated Lloyd steps do not increase this objective. Markov's
inequality consequently gives at least probability $1/2$ that one
run has $\Phi\le16(\log K+2)\Phi_{\mathrm{opt}}$.
If $\Phi_{\mathrm{opt}}=0$, the objective is zero almost surely and
the same conclusion holds. Conditional independence of the runs
implies that the smallest observed cost meets this bound except
with probability at most $2^{-m}$.

For completeness, convert this objective bound into a label bound.
Let $\mu_1,\ldots,\mu_K$ be the retained fitted centers and let
$\widehat z^{(0)}$ be their nearest-center labels. Since $K$ is fixed,
a successful run has
\[
 \sum_i\norm{\mu_{\widehat z^{(0)}_i}-O_{z_i\cdot}}_2^2
 \le 2\Phi+2\sum_i\norm{x_i-O_{z_i\cdot}}_2^2
 \lesssim\frac1p.
\]
There are therefore at most $C/p$ vertices for which the fitted center
is at distance at least $\sqrt2/3$ from the correct population center.
Every true group has at least one remaining vertex for large $np$,
since its size is at least $n/(\beta K)$. The $K$ balls of radius
$\sqrt2/3$ about the population centers are disjoint. Thus the $K$
fitted center indices are in bijection with the $K$ true groups;
all vertices outside the exceptional set have the correct matched
label. In particular, the successful run has no empty fitted group.
Dividing by $n$ proves \eqref{eq:kmpp-conditional}.
This is the approximate-clustering-to-misclassification argument
underlying \citet[Corollary 1]{zhouamini2019}, specialized here to
normalized centers at separation $\sqrt2$.
\end{proof}

\begin{lemma}[Radius-count initialization in the archived experiments]
\label{restore:fit-initializer}
The radius-count rule specified in Section~\ref{sec:experiments},
applied to the same regularized $U$, satisfies
\[
 \Mis(\widehat z,z)\lesssim\frac1{np}
\]
on the aggregate event. It is an alternative initialization, not the
$k$-means++ implementation of \cref{cor:kmpp-oracle}.
\end{lemma}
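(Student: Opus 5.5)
The plan is to reduce the claim to the deterministic geometry of \cref{lem:normalized-geometry}, as in the $k$-means++ argument of \cref{lem:kmpp-initializer}, but with no seeding randomness and hence no failure term. I read the radius-count rule as follows; its exact form is not reproduced in the text above, so the argument is written to adapt to it. For a fixed radius $\rho$, each normalized row $x_i$ gets the count $N_i=\#\{j:\norm{x_j-x_i}_2\le\rho\}$. Centers are chosen greedily by largest count. After each choice, all rows within a fixed exclusion radius of that center are discarded. Every vertex is then assigned to its nearest of the $K$ chosen centers. I would fix $\delta=\sqrt2/10$, $\rho=2\delta$ and exclusion radius $4\delta$, which are small compared with the separation $\sqrt2$ between the rows of $O$.

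\textbf{Step 1 (good and bad vertices).} On the aggregate event, \cref{lem:normalized-geometry} gives $\sum_i\norm{x_i-O_{z_i\cdot}}_2^2\le C/p$. By Markov's inequality, the bad set $\mathcal B=\{i:\norm{x_i-O_{z_i\cdot}}_2>\delta\}$ has $|\mathcal B|\le C\delta^{-2}/p$. Since $np\to\infty$, this is $o(n)$. Every true group therefore keeps at least $n/(\beta K)-C/p\ge n/(2\beta K)$ good vertices, and all of them lie in the ball $B(O_{k\cdot},\delta)$.

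\textbf{Step 2 (counts localize near the true centers).}
\begin{itemize}
\item A good vertex $i$ of group $k$ has $N_i\ge n/(2\beta K)$, because its $\rho$-ball contains $B(O_{k\cdot},\delta)$.
\item Conversely, suppose some row has $N_i> C\delta^{-2}/p$. Its $\rho$-ball then contains a good vertex, so the row lies within $3\delta$ of some $O_{k\cdot}$. The balls $B(O_{k\cdot},3\delta)$ are pairwise disjoint, so this $k$ is unique.
\item Hence every chosen center with a large count lies within $3\delta$ of a unique population center.
\end{itemize}

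\textbf{Step 3 (induction over the greedy choices).} I would show that after $s<K$ selections the centers sit near $s$ distinct rows of $O$. Two facts are needed.
\begin{itemize}
\item The exclusion radius $4\delta$ around a center near $O_{k\cdot}$ removes every good vertex of group $k$, since $3\delta+\delta=4\delta$.
\item It removes no good vertex of any other group, since $\sqrt2-4\delta>\delta$.
\end{itemize}
Consequently some remaining good vertex from an unrepresented group still has count at least $n/(2\beta K)$. The next maximizer therefore has a large count. By Step 2 it lies within $3\delta$ of some $O_{k'\cdot}$, and $k'$ is a new group: any remaining row near an already represented center would have been excluded.

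\textbf{Step 4 (conclusion).} After $K$ steps the chosen centers $\mu_k$ satisfy $\norm{\mu_k-O_{\pi(k)\cdot}}_2\le3\delta$ for a bijection $\pi$. Every good vertex is within $4\delta$ of its own matched center and at least $\sqrt2-4\delta>4\delta$ from every other center. Nearest-center assignment thus labels every good vertex correctly, and the output has no empty group. The misclassification count is at most $|\mathcal B|\le C/p$, which gives $\Mis(\widehat z,z)\lesssim(np)^{-1}$ deterministically on the aggregate event.

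\textbf{Main obstacle.} The delicate point is the treatment of ties and counts if the archived rule uses a different radius or a data-dependent threshold, for instance one scaled by $\bar D$. I would handle this by checking only two properties. First, the count threshold used in the rule must dominate the bad-set size $C/p$. Second, it must stay below $n/(2\beta K)$. Both hold for all large $n$ because $np\to\infty$, and the remaining constants are then chosen to match the rule's stated radius.
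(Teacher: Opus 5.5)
Your argument is correct and is essentially the paper's proof. It uses good and bad rows from \cref{lem:normalized-geometry}, an induction showing that the maximizing remaining-row ball contains a good row of an unselected block, deletion that removes exactly that block's good rows, and nearest-representative assignment; only the constants ($\delta=\sqrt2/10$ with radii $2\delta,4\delta$, versus the paper's $1/32,1/8,1/4$) differ.

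Two small repairs are needed. In Step 3 the inequality should read $\sqrt2-4\delta>4\delta$, not $>\delta$. Also, the next center lies in a new group for a different reason than the one you give, and rows near represented centers need not all have been excluded. Its count among remaining rows exceeds $|\mathcal B|$, so its ball contains a remaining good row, and that row must belong to an unrepresented group.
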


\begin{proof}
Use the normalized rows $x_i$ and population centers of
\cref{lem:normalized-geometry}.
Call a row good if $\norm{x_i-O_{z_i\cdot}}_2\le1/32$.
At most $m=\frac{C}{p}$ rows are bad, and $m<n/(3\beta K)$ for large $n$.
Inductively, all good rows of a selected block have been removed and
all good rows of unselected blocks remain. A radius-$1/8$ ball
centered at a good row of an unselected block contains every good row
of that block, hence more than $m$ remaining rows. The maximizing
ball must therefore contain a good row from an unselected block;
its representative is within $1/8+1/32=5/32$ of that center.
Every good row of that block lies within $3/16<1/4$ of the
representative. Good rows of any other block are farther than
$\sqrt2-3/16>1/4$, so deletion preserves the induction.
After $K$ selections, each good row is closer to its own
representative than to all others. Only the $m$ bad rows can be
misclassified. Dividing by $n$ gives the stated misclassification rate.
\end{proof}

For the remaining reconstruction estimates, write
$v=A\one-\widehat A\one$ and $R_1=\widehat A+v\one^\top/n$.
These are proof quantities computed from the same fixed reconstruction;
they are not additional algorithmic updates.

\begin{lemma}[Reconstructed block totals]
\label{restore:fit-block-totals}
The hollow, degree-preserving reconstruction in
\eqref{restore:reconstruction} satisfies
\begin{equation}\label{restore:fit-global-absolute}
 \sum_{i,j}|\widehat A^{\mathrm{c}}_{ij}|\le Cn^2p.
\end{equation}
For every estimated partition, under the common label matching,
\begin{equation}\label{restore:fit-total-accuracy}
 \left|[\widehat Z^\top\widehat A^{\mathrm{c}}\widehat Z]_{k\ell}
                  -n_kn_\ell P_{k\ell}\right|
 \le Cn\{1+np\Mis(\widehat z,z)\}.
\end{equation}
The omitted diagonal is included in the bound, and the constant is
uniform over the estimated partition.
\end{lemma}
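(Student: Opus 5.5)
The plan is to decompose $\widehat A^{\mathrm{c}}$ into pieces whose block totals are controlled by the aggregate event. We use $\widehat A=AH$ from \cref{prop:sparse-floor}, and write the correction through $v=A\one-\widehat A\one$ and $R_1=\widehat A+v\one^\top/n$. Then $\widehat A^{\mathrm{c}}$ differs from $R_1$ only by the diagonal removal and by the replacement of $1/n$ with $1/(n-1)$ in the correction. These differences are handled by the bounds $|\widehat A_{ii}|\lesssim a_i^2$ and $|v_i|\lesssim D_i+\sum_j|\widehat A_{ij}|$, using $\sum_i a_i^2\le CKp\cdot$(row bound).

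\textbf{Step 1: the global absolute bound \eqref{restore:fit-global-absolute}.} Write
\[
 \widehat A_{ij}=\sqrt{G_i}\,(U_{i\cdot}(\Lambda/\tau)U_{j\cdot}^\top)\sqrt{G_j}.
\]
Since $\norm{\Lambda/\tau}_{\op}\le1$, this gives $|\widehat A_{ij}|\le a_ia_j$. Hence $\sum_{i,j}|\widehat A_{ij}|\le(\sum_ia_i)^2\le Cn^2p$ by \cref{restore:fit-wrong-mass}. The correction rows contribute $\sum_i|D_i-\sum_{j\ne i}\widehat A_{ij}|\le\sum_iD_i+\sum_{i,j}|\widehat A_{ij}|\le Cn^2p$. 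The diagonal terms are dominated by the same quantity, which gives \eqref{restore:fit-global-absolute}.

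\textbf{Step 2: block totals for the true partition.} Put $f=Z_{\cdot\ell}$ and $g=Z_{\cdot k}$. We compare $g^\top\widehat Af=g^\top AHf$ with $g^\top Af$, and split
\[
 g^\top A(H-I)f=g^\top(A-ZPZ^\top)(H-I)f+g^\top ZPZ^\top(H-I)f .
\]
For the first term, write it as $(G^{1/2}g)^\top[G^{-1/2}(A-ZPZ^\top)G^{-1/2}]\,G^{1/2}(H-I)f$. By \eqref{restore:fit-geometry-one} and \eqref{eq:floor-weighted-errors} this is at most $\norm{G^{1/2}g}_2\,(C/\sqrt{np})\,C\sqrt n\le Cn$, using $\norm{G^{1/2}g}_2\le C\sqrt{n^2p}$.

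For the second term, exploit that $H$ is the $G$-orthogonal projector. Then $ZPZ^\top(H-I)f=(ZPZ^\top G^{-1})G(H-I)f$. Since $G^{-1}Z$ lies in the $G$-range of $H$ up to $(I-\Pi)G^{1/2}Z$, the residual bound \eqref{restore:fit-geometry-two} gives $O(p\sqrt n\cdot\sqrt n\cdot\sqrt{n}/\sqrt{np}\cdots)$. I expect this to reduce to $O(n)$ after applying Cauchy--Schwarz in the $G$-norm.

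Finally, $g^\top Af-n_kn_\ell P_{k\ell}$ is $O(n)$ by \eqref{restore:fit-block-event} together with the diagonal term. The degree correction $g^\top v\,(n_\ell/n)$ is also $O(n)$: indeed $g^\top v=g^\top A(I-H)\one$, which is bounded in the same way.

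\textbf{Step 3: passing to the estimated partition.} The indicators change only on the error set $E$, so
\[
 \widehat Z^\top\widehat A^{\mathrm c}\widehat Z-Z^\top\widehat A^{\mathrm c}Z
\]
is bounded by sums of $|\widehat A^{\mathrm c}_{ij}|$ over rows or columns in $E$. For the spectral part, $\sum_{i\in E,j}|\widehat A_{ij}|\le(\sum_Ea_i)(\sum_ja_j)\le C(|E|\sqrt p+p^{-1/2})\,n\sqrt p=C(np|E|+n)$ by \cref{restore:fit-wrong-mass}. Columns in $E$ are handled symmetrically.

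For the correction part, rows in $E$ contribute $\sum_{i\in E}(D_i+\sum_j|\widehat A_{ij}|)$. We bound $D_i\le G_i$ and use $\sum_EG_i\le C(n+np|E|)$. Columns in $E$ of the correction contribute $(|E|/(n-1))\sum_i|v_i|\le C|E|np$. The size terms satisfy $|n_k(\widehat z)n_\ell(\widehat z)-n_kn_\ell|\le 2n|E|$, which multiplied by $P_{k\ell}$ gives $np|E|$.

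\textbf{Main obstacle.} I expect the main difficulty to be the second term in Step 2, the projection bias $g^\top ZPZ^\top(H-I)f$ of size $O(n)$ rather than $O(n\sqrt{np})$. The first attempt is to use $H^\top$-invariance: $g^\top ZPZ^\top=\tau^{-1}\cdots$, so the row vector lies near the $G^{-1}$-weighted signal space. Then $(H-I)$ annihilates it up to $\norm{(I-\Pi)G^{1/2}Z}_F\le C\sqrt n$, and pairing with $\norm{(I-H)f}_G\le C\sqrt n$ with the correct weights should give $O(n)$.
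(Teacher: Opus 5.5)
Your route is sound but differs from the paper's, and two steps need repair before it is a proof.

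\textbf{The projection-bias step.} You left this term at ``I expect'', but it closes quickly. Put $h=ZPZ^\top g$, which is block constant with $\norm h_\infty\le np$. Since $GH=H^\top G$, the map $I-H$ is a $G$-orthogonal projector, so
\[
 g^\top ZPZ^\top(H-I)f=-\langle (I-H)G^{-1}h,(I-H)f\rangle_G,\qquad
 \norm{(I-H)G^{-1}h}_G=\norm{(I-\Pi)G^{-1/2}h}_2 .
\]
Because $G^{-1/2}h\in\col(G^{-1/2}Z)=\col(V)$ and $(I-\Pi)V=(I-\Pi)(V-U)$, the last norm is at most
\[
 \norm{U-V}_F\,\norm{G^{-1/2}h}_2\le (C/\sqrt{np})\,C\sqrt{n\cdot np}=C\sqrt n .
\]
Pairing this with $\norm{(I-H)f}_G\le C\sqrt n$ gives $O(n)$. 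Equivalently, run the computation of \cref{lem:sbm-floor-cancellation} with $h$ in place of $r$.

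\textbf{The diagonal bound.} The hollowing cost is charged to every partition, including the true one, so you need $\sum_i|\widehat A_{ii}|\le\sum_ia_i^2=O(n)$. There is no uniform ``row bound'' on $a_i$ on the aggregate event. Instead use $G_i\le2\tau+(G_i-\tau)^2/G_i$ (equivalent to $\tau^2\ge0$), $\Pi_{ii}\le1$ and $\operatorname{tr}\Pi=K$. These give $\sum_iG_i\Pi_{ii}\le2\tau K+Cn\le Cn$.

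\textbf{Minor points.} First, $\widehat A^{\mathrm c}=R_1+r\one^\top/(n-1)-\tfrac{n}{n-1}\diag(r)$ with $r_i=\widehat A_{ii}+v_i/n$. So the correction also absorbs $\widehat A_{ii}$, but the total cost is still at most $2\sum_i|r_i|$. Second, the size terms in Step 3 are unnecessary, because the target uses the true $n_kn_\ell$.

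\textbf{Comparison with the paper.} The paper never separates noise from bias. Because $\Pi$ is a spectral projector of $Q=G^{-1/2}AG^{-1/2}$, it has the exact identity $A-\widehat A=G^{1/2}(I-\Pi)Q(I-\Pi)G^{1/2}$. Hence $\widehat Z^\top(\widehat A-A)\widehat Z$ is a quadratic form in $(I-\Pi)G^{1/2}\widehat Z$ against $Q_{\rm tail}$, whose norm is $O((np)^{-1/2})$. The paper applies this directly to the estimated partition, using $\norm{(I-\Pi)G^{1/2}\widehat Z}_F^2\le C(n+np|E|)$. It moves misassigned vertices only in the raw block total, at cost $\sum_{i\in E}D_i$. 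The same identity gives $v=G^{1/2}Q_{\rm tail}(I-\Pi)G^{1/2}\one$, hence $\sum_i|v_i|\le Cn$ and the bound on $\widehat Z^\top v$.

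You instead prove the true-partition bound from $\widehat A=AH$ as two $O(n)$ pieces; the identity shows their sum is actually $O(n/\sqrt{np})$. You then transfer to $\widehat Z$ through $|\widehat A_{ij}|\le a_ia_j$ and \cref{restore:fit-wrong-mass}, where your crude bound $\sum_i|v_i|\le Cn^2p$ suffices. Your argument avoids the tail identity at the price of the extra bias computation. The paper's is shorter and also yields the sharper control of $v$ that is reused in \cref{restore:fit-invariant}.
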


\begin{proof}
With $r_i=(R_1)_{ii}$, the reconstruction formula is equivalently
\[
 \widehat A^{\mathrm{c}}=R_1+\frac{r\one^\top}{n-1}
                   -\frac n{n-1}\diag(r),
\]
because its off-diagonal entries are $\widehat A_{ij}+(v_i+\widehat A_{ii})/(n-1)$.
Let $Q_{\rm tail}=(I-\Pi)Q(I-\Pi)$. Spectral invariance gives
\[
 v=G^{1/2}Q_{\rm tail}(I-\Pi)G^{1/2}\one.
\]
The bounds $\norm{Q_{\rm tail}}_{\op}\le C/\sqrt{np}$,
$\norm{(I-\Pi)G^{1/2}\one}_2\le C\sqrt n$ and
$\sum_iG_i\le Cn^2p$ imply $\sum_i|v_i|\le Cn$.
Also
\[
 \sum_i|\widehat A_{ii}|\le\sum_iG_i\Pi_{ii}
 \le2\tau K+\sum_i\frac{(G_i-\tau)^2}{G_i}\le Cn.
\]
Consequently $\sum_i|(R_1)_{ii}|\le Cn$; hollowing has total
absolute cost at most $2\sum_i|(R_1)_{ii}|$.
Since $|\widehat A_{ij}|\le a_ia_j$, \cref{restore:fit-wrong-mass} bounds
its total absolute mass by $Cn^2p$. The rank-one degree correction
has absolute mass $\sum_i|v_i|$, proving
\eqref{restore:fit-global-absolute}.

Let $E$ be the incorrectly assigned set. Each nonzero row of
$\widehat Z-Z$ has squared norm two, so the block-space residual and
\cref{restore:fit-wrong-mass} give
\[
 \norm{(I-\Pi)G^{1/2}\widehat Z}_F^2
 \le 2\norm{(I-\Pi)G^{1/2}Z}_F^2
       +2\norm{G^{1/2}(\widehat Z-Z)}_F^2
 \le C(n+np|E|).
\]
The exact identity
\[
 \widehat Z^\top(\widehat A-A)\widehat Z
 =-\{(I-\Pi)G^{1/2}\widehat Z\}^\top Q_{\rm tail}
       \{(I-\Pi)G^{1/2}\widehat Z\}
\]
therefore has norm at most $C(n+np|E|)/\sqrt{np}$.
Moving the incorrectly assigned vertices changes any raw block total
by at most $C\sum_E D_i\le C(n+np|E|)$.
Together with \eqref{restore:fit-block-event}, this bounds the error
for $\widehat A$ by $C(n+np|E|)$.

The degree correction changes a block total by
$[\widehat Z^\top v]_k n_\ell(\widehat z)/n$.
Using spectral invariance again gives
\[
 \norm{\widehat Z^\top v}_2
 \le \frac{C\sqrt{n(n+np|E|)}}{\sqrt{np}}
 \le C(n+np|E|)
\]
for all sufficiently large $np$. The global $O(n)$ hollowing cost
has already been proved. Finally, $|E|=n\Mis(\widehat z,z)$ gives
\eqref{restore:fit-total-accuracy}.
\end{proof}

For a tested vertex $i$, the local regularity condition is
\begin{equation}\label{restore:fit-local-row}
 D_i\le L_Dnp,\qquad
 a_i=\sqrt{G_i}\norm{U_{i\cdot}}_2\le L_U\sqrt p,
\end{equation}
where $L_D,L_U$ are fixed. It is imposed only at the tested target.

\begin{lemma}[Local absolute mass]
\label{restore:fit-local-mass}
On \eqref{restore:fit-local-row},
\begin{equation}\label{restore:fit-local-mass-bounds}
 \sum_j|\widehat A^{\mathrm{c}}_{ij}|\le Cnp,\qquad
 \sum_{j\in E}|\widehat A^{\mathrm{c}}_{ij}|\le C(1+p|E|)
       \quad\text{for every }E\subseteq[n].
\end{equation}
For the incorrectly assigned set the second bound is
$C\{1+np\Mis(\widehat z,z)\}$.
\end{lemma}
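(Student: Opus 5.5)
The plan is to decompose the corrected row into its three ingredients and bound each separately. By \eqref{restore:reconstruction}, for $j\ne i$,
\[
 |\widehat A^{\mathrm{c}}_{ij}|\le|\widehat A_{ij}|+\frac{|v_i|+|\widehat A_{ii}|}{n-1},
\]
where $v_i=D_i-\sum_j\widehat A_{ij}$ as in the proof of \cref{restore:fit-block-totals}. It therefore suffices to bound three quantities:
\[
 \sum_{j\in E}|\widehat A_{ij}|,\qquad |v_i|,\qquad |\widehat A_{ii}|.
\]
Each correction term then contributes at most $(|v_i|+|\widehat A_{ii}|)|E|/(n-1)$. The aim is to show $|v_i|+|\widehat A_{ii}|\le C(1+np)$, so this contribution is $C(1+p|E|)$, which already has the target form.

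\emph{Step 1: spectral entries.} Use $\widehat A=G^{1/2}Q_KG^{1/2}$ with $\norm{\Lambda/\tau}_{\op}\le1$. This gives the pointwise bound $|\widehat A_{ij}|\le a_ia_j$ already used in \cref{restore:fit-block-totals}. Summing over $j\in E$ and applying \cref{restore:fit-wrong-mass} together with the target condition $a_i\le L_U\sqrt p$ from \eqref{restore:fit-local-row} gives
\[
 \sum_{j\in E}|\widehat A_{ij}|\le L_U\sqrt p\cdot C\bigl(|E|\sqrt p+p^{-1/2}\bigr)=C(1+p|E|).
\]
The global version, $\sum_j|\widehat A_{ij}|\le L_U\sqrt p\cdot Cn\sqrt p=Cnp$, comes from $\sum_ja_j\le Cn\sqrt p$. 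The same pointwise bound gives the diagonal estimate $|\widehat A_{ii}|\le a_i^2\le L_U^2p$.

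\emph{Step 2: the degree defect.} Bound $|v_i|\le D_i+\sum_j|\widehat A_{ij}|\le L_Dnp+Cnp$, using the degree condition in \eqref{restore:fit-local-row} and Step 1. This is cruder than needed, but it suffices. Combined with Step 1, it gives $(|v_i|+|\widehat A_{ii}|)|E|/(n-1)\le Cp|E|$ for arbitrary $E$, and at most $Cnp$ when $E=[n]$.

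\emph{Step 3: conclusion.} Adding Steps 1 and 2 proves both bounds in \eqref{restore:fit-local-mass-bounds}. The zero diagonal entry of $\widehat A^{\mathrm{c}}$ contributes nothing. Substituting $|E|=n\Mis(\widehat z,z)$ gives the final statement. Every step is deterministic and holds for every $E$ simultaneously, since the only inputs are \cref{restore:fit-wrong-mass}, which is uniform over sets, and the target-only conditions.

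The only point requiring care is that the spectral part must be bounded through $a_ia_j$ rather than through raw adjacency entries, because $E$ may be chosen using the whole graph. The factorization together with the uniform weighted-mass lemma handles this, so I do not expect a genuine obstacle.
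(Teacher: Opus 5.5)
Your proposal is correct and follows essentially the paper's own argument: the pointwise bound $|\widehat A_{ij}|\le a_ia_j$ combined with $a_i\le L_U\sqrt p$ and \cref{restore:fit-wrong-mass}, and then the crude estimate $|v_i|\le D_i+\sum_j|\widehat A_{ij}|\le Cnp$ for the degree correction. The only difference is bookkeeping: the paper routes the correction through $R_1=\widehat A+v\one^\top/n$ and a hollowing cost of at most $2|(R_1)_{ii}|\le Cp$, whereas you bound the explicit off-diagonal shift $(v_i+\widehat A_{ii})/(n-1)$ entrywise, which amounts to the same estimate.
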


\begin{proof}
The inequality $|\widehat A_{ij}|\le a_ia_j$, the local bound on $a_i$ and
\cref{restore:fit-wrong-mass} prove both claims for $\widehat A$.
They also give $|v_i|\le D_i+\sum_j|\widehat A_{ij}|\le Cnp$.
The rank-one correction adds at most $Cnp$ to the full row mass and
$Cp|E|$ to its mass over $E$. Finally,
$|(R_1)_{ii}|\le a_i^2+|v_i|/n\le Cp$; hollowing changes either
bound by at most $2|(R_1)_{ii}|$. This proves the assertion.
\end{proof}

\begin{proposition}[Parameter and score errors in terms of misclassification]
\label{restore:fit-ordinary}
\label{restore:fit-score}
There is a fixed $c>0$ such that, uniformly over current labels with
$\Mis(\widehat z,z)\le c$, the symmetric Bernoulli block fit is unique
and interior for all sufficiently large $n$, and
\begin{equation}\label{restore:fit-ordinary-parameter}
 \max_{k,\ell}|\widehat P_{k\ell}-P_{k\ell}|
 \le C\left\{\frac1n+p\Mis(\widehat z,z)\right\}.
\end{equation}
At every target satisfying \eqref{restore:fit-local-row}, the fitted
log-likelihood contrast obeys
\begin{equation}\label{restore:fit-ordinary-score}
 \begin{aligned}
 &\sum_{j\ne i}\left\{
 \widehat A^{\mathrm{c}}_{ij}\log\frac{\widehat P_{k\widehat z_j}}
                              {\widehat P_{\ell\widehat z_j}}
 +(1-\widehat A^{\mathrm{c}}_{ij})\log\frac{1-\widehat P_{k\widehat z_j}}
                                   {1-\widehat P_{\ell\widehat z_j}}
 \right\}\\
 &\qquad=[\widehat A^{\mathrm{c}} w_{k\ell}]_i
       +\sum_{j\ne i}\log\frac{1-P_{kz_j}}{1-P_{\ell z_j}}
       +O\{1+np\Mis(\widehat z,z)\}.
 \end{aligned}
\end{equation}
Here $w_{k\ell}$ is the true log-odds vector in
\eqref{eq:sbm-weights}. The constants are uniform over the current labels on the same aggregate
event. This is a simultaneous deterministic assertion, so it also
holds when the labels or their error set are arbitrary functions of
$A$ and any seeding randomness. In particular, a misclassification
rate of order $1/(np)$ gives parameter error $O(1/n)$ and a bounded
score remainder. It does not assert conditional independence of
$\widehat P$ or of the initial labels from the tested row.
\end{proposition}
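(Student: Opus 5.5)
The plan has two parts. First bound the fitted block probabilities using \cref{restore:fit-block-totals}. Then expand the fitted contrast around the true parameters, using the local mass bounds of \cref{restore:fit-local-mass}.

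\textbf{Parameter error.} Fix aligned labels with $\Mis(\widehat z,z)\le c$ and write $m=n\Mis(\widehat z,z)=|E|$. Since $|n_k(\widehat z)-n_k|\le m$ and $n_k\gtrsim n$, the denominators in \eqref{eq:block-average-operator} equal $n_kn_\ell\{1+O(\Mis)\}+O(n)$; the $O(n)$ term is the diagonal correction $n_k(\widehat z)$. By \eqref{restore:fit-total-accuracy} the numerator is $n_kn_\ell P_{k\ell}+O(n+n^2p\Mis)$. Division gives $|\widehat P_{k\ell}-P_{k\ell}|\le C\{1/n+p\Mis\}$ before symmetrization. Averaging with the transpose preserves this bound, because $P$ is symmetric. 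Choose $c$ small enough that this error is at most $p/(2\omega)$. Then $\widehat P_{k\ell}\in[p/(2\omega),2p]$. This lies strictly inside $[c_0\bar D/n,1/2]$ once $c_0$ is small (recall $\bar D\asymp np$) and $p\to0$; the latter follows from $np^2\le C_1$ and $np\to\infty$. So neither truncation is active. For fixed reconstructed totals, the Bernoulli working criterion in each block is strictly concave in $P_{k\ell}$ on the interior, with maximizer the block mean. This gives uniqueness and interiority.

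\textbf{Score expansion.} Write $\widehat w_j=\log\{\widehat P_{k\widehat z_j}(1-\widehat P_{\ell\widehat z_j})/[\widehat P_{\ell\widehat z_j}(1-\widehat P_{k\widehat z_j})]\}$. The fitted contrast is then $[\widehat A^{\mathrm c}\widehat w]_i+\sum_{j\ne i}\log\{(1-\widehat P_{k\widehat z_j})/(1-\widehat P_{\ell\widehat z_j})\}$. Split each sum into coordinates $j\notin E$ and $j\in E$.

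On $j\notin E$ we have $\widehat z_j=z_j$. Because all probabilities are comparable to $p$, the parameter bound gives $|\widehat w_j-(w_{k\ell})_j|\le C\{1/(np)+\Mis\}$. Multiplying by the full row mass $\sum_j|\widehat A^{\mathrm c}_{ij}|\le Cnp$ from \cref{restore:fit-local-mass} yields $O(1+np\Mis)$.

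On $j\in E$ both weight vectors are bounded by a constant. The local bound $\sum_{j\in E}|\widehat A^{\mathrm c}_{ij}|\le C(1+p|E|)=C(1+np\Mis)$ therefore controls this part too.

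For the nonedge term, $\log[(1-a)/(1-b)]=-(a-b)+O(p^2)$. Replacing $\widehat P$ by $P$ on $j\notin E$ costs $n\cdot C\{1/n+p\Mis\}$. Swapping labels on $j\in E$ costs $|E|\cdot Cp=np\Mis$. The quadratic remainders total $O(np^2)=O(1)$ by $np^2\le C_1$. Together these give \eqref{restore:fit-ordinary-score}.

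Every step is deterministic on the aggregate event and holds uniformly in $\widehat z$. This justifies the simultaneous assertion.

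\textbf{Main obstacle.} The delicate step is the $j\notin E$ weight replacement. The naive bound $|\widehat w_j-w_j|\lesssim\Mis$ must be paired with $\ell^1$ row mass $O(np)$, not $\ell^2$ mass. This is exactly why the signed reconstruction needs the absolute-mass control of \cref{restore:fit-local-mass}; it is also why the $1/n$ term in the parameter error must come out at scale $1/n$, so that the product is bounded rather than of order $p$-inverse.
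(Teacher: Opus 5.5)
Your proposal is correct and follows the paper's proof essentially step for step: the parameter error comes from the reconstructed block totals in \cref{restore:fit-block-totals} and the size perturbation $|n_k(\widehat z)-n_k|\le n\Mis(\widehat z,z)$, and interiority and uniqueness come from choosing $c$ small. The score remainder then splits into three pieces: the log-odds error $C\{(np)^{-1}+\Mis(\widehat z,z)\}$ times the row mass $Cnp$, bounded coefficients on $E$ times the local mass of \cref{restore:fit-local-mass}, and the nonedge intercept. The only differences are cosmetic: you bound each directional average and then symmetrize, whereas the paper pools the ordered totals into $E_{k\ell}/N_{k\ell}$ (the two coincide because both directions share the denominator $n_k(\widehat z)n_\ell(\widehat z)$), and you expand $\log(1-q)$ to first order with an $O(np^2)$ remainder where the paper simply uses its bounded derivative.
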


\begin{proof}
For one unordered fitted block pair, write $E_{k\ell}$ for its ordered
reconstructed total and $N_{k\ell}$ for the number of ordered
 off-diagonal pairs. There are two directions when $k\ne\ell$ and
one ordered within-group total when $k=\ell$. Consequently
\cref{restore:fit-block-totals} and the exact pair counts give
\[
 \begin{aligned}
 E_{k\ell}
 &=(2-\ind{k=\ell})n_kn_\ell P_{k\ell}
        +O\bigl(n\{1+np\Mis(\widehat z,z)\}\bigr),\\
 N_{k\ell}
 &=(2-\ind{k=\ell})n_k(\widehat z)n_\ell(\widehat z)+O(n).
 \end{aligned}
\]
For the common label matching,
$|n_k(\widehat z)-n_k|\le n\Mis(\widehat z,z)$.
Choose the fixed $c$ small enough that every fitted group has size
at least $n/(2\beta K)$. Thus $N_{k\ell}\asymp n^2$, and
\[
 \left|\frac{E_{k\ell}}{N_{k\ell}}-P_{k\ell}\right|
 \le C\left\{\frac1n+p\Mis(\widehat z,z)\right\}.
\]
After decreasing $c$ if necessary, this error is a sufficiently small
fixed multiple of $p$ for all large $np$. The ratio is then positive,
comparable to $p$, and strictly inside the algorithm's clipping
interval. This uses the stipulated sufficiently small $c_0$ and
$p\to0$, a consequence of $np^2\le C_1$.
The criterion $E_{k\ell}\log q+(N_{k\ell}-E_{k\ell})\log(1-q)$
has the unique interior maximizer $q=E_{k\ell}/N_{k\ell}$,
proving \eqref{restore:fit-ordinary-parameter}.

On the positive comparable-probability class, the resulting log-odds
coefficient error is at most
$C\{(np)^{-1}+\Mis(\widehat z,z)\}$.
Multiplication by the absolute row mass $Cnp$ costs at most
$C\{1+np\Mis(\widehat z,z)\}$.
Changing the incorrectly assigned neighbor labels alters bounded
coefficients on $E$, at cost at most $C(1+p|E|)$ by
\cref{restore:fit-local-mass}.
For the nonedge intercept, bounded derivatives of $\log(1-q)$ give
parameter cost at most $C\{1+np\Mis(\widehat z,z)\}$;
wrong neighbor labels add at most $Cp|E|$.
As $|E|=n\Mis(\widehat z,z)$, these bounds prove
\eqref{restore:fit-ordinary-score}. They hold simultaneously over
all stated input labels; no independence of the fitted quantities
from the tested row is used.
\end{proof}

\begin{lemma}[Uniform misclassification control for finite hard refits]
\label{restore:fit-invariant-basin}
\label{restore:fit-invariant}
There are fixed constants $c>0$ and $C<\infty$ such that, on the
aggregate event and for all sufficiently large $n$, every input
satisfying $\Mis(\widehat z,z)\le c$ yields, after one block fit and
Bernoulli assignment,
\[
 \Mis\!\left(\mathcal L(\widehat A^{\mathrm{c}},\widehat P,\widehat z),z\right)
       \le\frac{C}{np}.
\]
For each fixed $C_0$, start from any labels satisfying
$\Mis(\widehat z^{(0)},z)\le C_0/(np)$. All iterates then satisfy a
single bound $C/(np)$ under the same initial label matching, where
$C$ may also depend on $C_0$. This statement is simultaneous over
all such starting partitions, including same-graph randomized ones.
For every such sequence, at every target satisfying
\eqref{restore:fit-local-row}, an error at any updated iterate implies
\begin{equation}\label{restore:fit-canonical-error}
 [\widehat A^{\mathrm{c}} w_{z_i\ell}]_i+
 \sum_{j\ne i}\log\frac{1-P_{z_i z_j}}{1-P_{\ell z_j}}\le C
 \quad\text{for some }\ell\ne z_i.
\end{equation}
The threshold is the same for all such iterates and for any finite,
possibly data-dependent stopping rule after at least one update.
\end{lemma}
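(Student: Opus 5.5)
The proof is deterministic on the aggregate event and rests on \cref{restore:fit-score}. I would first apply it to an input with $\Mis(\widehat z,z)\le c$. For every target satisfying \eqref{restore:fit-local-row}, the fitted contrast then equals the canonical true-parameter reconstructed contrast
\[
 S_{k\ell,i}=[\widehat A^{\mathrm{c}} w_{k\ell}]_i+\sum_{j\ne i}\log\frac{1-P_{kz_j}}{1-P_{\ell z_j}}
\]
up to an error $O\{1+np\Mis(\widehat z,z)\}$. An updated label $\ell\ne z_i$ is a (possibly tied) maximizer, so the fitted contrast of $z_i$ against $\ell$ is nonpositive. Hence $S_{z_i\ell,i}\le C\{1+np\Mis(\widehat z,z)\}$. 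On the invariant radius $\Mis\le C/(np)$ this is \eqref{restore:fit-canonical-error}, with a threshold that depends only on that radius. It therefore does not depend on the iteration index or on any stopping rule.

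Next I would show that few vertices can violate the canonical inequality. Take the block-constant vector $f$ with entries $f_j=(w_{k\ell})_j$, and subtract its mean as in \cref{restore:loo-corrected}. Summing over all $i$, I want a bound on the number of $i$ with $z_i=k$ and $S_{k\ell,i}\le C'$, where $C'$ is a large but fixed threshold. I would use the deterministic decomposition $\widehat A^{\mathrm c}f=Af+(\widehat A^{\mathrm c}-A)f$. The population row of $Af$ separates the correct label by margin $\asymp np$, since the rows of $P$ are separated by $\sigma_K(P)\ge cp$ and $\E_kT_{k\ell}\gtrsim np$. The noise term $(A-\E A)f$ has squared $\ell_2$ norm $\lesssim n\cdot np$ on the operator-norm event \eqref{eq:floor-noise}. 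The spectral error $(\widehat A-A)f=A(H-I)f$ is handled by \eqref{eq:floor-weighted-errors}, combined with the noise bound and the degree correction bound $\sum_i|v_i|\le Cn$ from \cref{restore:fit-block-totals}. This gives squared norm $\lesssim n\cdot np$, possibly plus an $\ell_1$ term of order $n$.

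By Chebyshev, the number of vertices whose score falls by $\gtrsim np$ is then $\lesssim n\cdot np/(np)^2=n/(np)$. The $\ell_1$ part contributes at most $n/(np)$ by Markov. Targets that fail \eqref{restore:fit-local-row} need a separate count. The degree-energy bound gives at most $Cn/(np)$ vertices with $D_i>L_Dnp$. The bound $\sum_ia_i^2\le\sum_iG_i\Pi_{ii}\le Cn$ gives at most $Cn/(L_U^2 p\cdot n)\cdot n$ of the others, which is $O(1/p)=O(n/(np))$. Together these show that one update maps $\Mis\le c$ into $\Mis\le C/(np)$. This constant does not depend on the input, because the remainder $O(1+np\cdot c)$ is only $O(np)$ times a small constant. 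That remainder is absorbed into the $np$ margin once $c$ is chosen small.

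Induction then gives the invariance. With $C_0/(np)\le c$ for large $n$, and the first output at most $C/(np)\le c$, every later iterate is again at most $C/(np)$. The matching must stay fixed: an error set below $c$ forces the nearest matching to be unique, so the alignment is preserved. Since everything is simultaneous over partitions on one event, the argument covers same-graph randomized starts and adaptive stopping.

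The main obstacle is the middle step: a uniform $\ell_2$ control of $(\widehat A^{\mathrm c}-A)f$ using the capped geometry, with no maximum-degree event. The factor $G^{1/2}$ in $A(H-I)f$ must be paired with $\norm{(I-H)f}_G\le C\sqrt n$. I would try the identity $\widehat A=AH$ and write
\[
 A(H-I)f=G^{1/2}Q\,G^{1/2}(H-I)f.
\]
Then $\norm Q_{\op}\le1$ does not suffice on its own. Instead I would use $Q(I-\Pi)=Q_{\rm tail}$ with norm $C/\sqrt{np}$, applied after weighting by $G^{1/2}$ and with $\ell_1$ rather than $\ell_2$ accounting, as in \cref{restore:fit-block-totals}.
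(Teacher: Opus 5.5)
Your overall architecture matches the paper's. You replace the fitted contrast by the canonical one via \cref{restore:fit-score}, count the rows whose canonical contrast loses a fixed fraction of its $c_{\rm gap}np$ margin, add the targets violating \eqref{restore:fit-local-row}, and induct with a uniquely minimizing matching.

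The step that fails is your count of targets violating the spectral-row part of \eqref{restore:fit-local-row}. From $\sum_ia_i^2=\sum_iG_i\Pi_{ii}\le Cn$, Markov gives only $\#\{i:a_i>L_U\sqrt p\}\le Cn/(L_U^2p)$. This exceeds $n$ because $p\to0$; your expression $Cn/(L_U^2p\cdot n)\cdot n$ equals this quantity, not $O(1/p)$. No sharper trace bound can rescue the argument. Since $G_i\ge\tau$, one has $\sum_ia_i^2\ge K\tau\asymp np$, while a typical row has $a_i^2\asymp p$. Markov on the total therefore never gives fewer than order $n/L_U^2$ exceptions, whereas the lemma needs $O(1/p)$.

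The missing ingredient is the eigenvector perturbation bound \eqref{restore:fit-basis}. It gives $a_i\le C\sqrt p+\sqrt{G_i}\,\norm{(U-V)_{i\cdot}}_2$. Over rows with $D_i\le L_Dnp$, where $G_i\le Cnp$, one has $\sum_iG_i\norm{(U-V)_{i\cdot}}_2^2\le Cnp\norm{U-V}_F^2\le C$. Hence at most $C/(L_U^2p)$ such rows have $a_i>L_U\sqrt p$ once $L_U$ exceeds twice the constant in \eqref{restore:fit-basis}. The paper makes the equivalent count through $\norm{V_{i\cdot}}_2\le C/\sqrt n$ and the set $\{i:\sqrt n\norm{U_{i\cdot}}_2>R\}$.

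A second, repairable imprecision concerns the noise term. The bound \eqref{eq:floor-noise} controls only the capped matrix, and in the sparse range $\norm{A-\E A}_{\op}$ need not be $O(\sqrt{np})$. So your unweighted claim $\norm{(A-\E A)f}_2^2\lesssim n\cdot np$ is not available on the aggregate event. What is available, from the $G^{-1/2}$-weighted operator bound, is $\sum_i[(A-ZPZ^\top)f]_i^2/G_i\le Cn$. Your Chebyshev count must therefore run over rows with $G_i\le Cnp$ only. This costs nothing, since those are the only rows at which you invoke \cref{restore:fit-score}.

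The same weighting removes what you call the main obstacle. The bounds $\norm Q_{\op}\le1$ and $\norm{(I-H)f}_G\le C\sqrt n$ already give $\norm{G^{-1/2}(\widehat A-A)f}_2\le C\sqrt n$. That suffices for an $O(1/p)$ count on those rows. Using $Q(I-\Pi)=Q_{\rm tail}$ improves it to $C/\sqrt p$, and no $\ell_1$ accounting is needed there.

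With these two repairs, your split through the raw row $Af$ is a valid alternative to the paper's route. The paper avoids the split altogether. The best rank-$K$ inequality gives $\norm{Q_K-G^{-1/2}ZPZ^\top G^{-1/2}}_{\op}\le C/\sqrt{np}$, hence $\norm{G^{-1/2}(R_1-ZPZ^\top)Z}_F^2\le Cn$ in one line. The degree correction costs $\norm{G^{-1/2}v}_2^2\le C/p$, and hollowing is paid by $\sum_i|(R_1)_{ii}|\le Cn$, exactly as in your Markov step.
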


\begin{proof}
Choose large fixed $L_D,R$, with $L_D$ larger than twice the uniform
upper bounds on $\sum_jP_{z_i z_j}/(np)$ and $\tau/(np)$.
The degree energy bounds $|\{i:D_i>L_Dnp\}|\le C/p$.
Also $\norm{V_{i\cdot}}_2\le C/\sqrt n$, so
\eqref{restore:fit-basis} gives
$|\{i:\sqrt n\norm{U_{i\cdot}}_2>R\}|\le C/p$.
Every other target satisfies \eqref{restore:fit-local-row} for a
fixed $L_U$, because $G_i\le Cnp$ there. These counting exceptions
are independent of the input labels.

Best rank-$K$ approximation gives
$\norm{Q_K-G^{-1/2}ZPZ^\top G^{-1/2}}_{\op}\le C/\sqrt{np}$,
whence
\[
 \norm{G^{-1/2}(\widehat A-ZPZ^\top)Z}_F^2
 \le \frac{C}{np}\norm{G^{1/2}Z}_F^2\le Cn.
\]
The earlier expression for $v$ implies
$\norm{G^{-1/2}v}_2^2\le C/p$.
Since $\norm{\one^\top Z/n}_2\le1$, the degree correction
preserves the bound:
\begin{equation}\label{restore:fit-basin-action}
 \norm{G^{-1/2}(R_1-ZPZ^\top)Z}_F^2\le Cn.
\end{equation}
Hollowing changes the action of a bounded block-constant vector
at row $i$ by at most $C|(R_1)_{ii}|$.
Because $\sum_i|(R_1)_{ii}|\le Cn$, it changes that action by more
than a prescribed small multiple of $np$ on at most $C/p$ rows.
This auxiliary set is used only for counting misclassifications;
it is not charged as a probability exception in the sharp risk proof.

For $k=z_i$, the true population contrast is
\[
 [ZPZ^\top w_{k\ell}]_i+
       \sum_{j\ne i}\log\frac{1-P_{kz_j}}{1-P_{\ell z_j}}.
\]
It is a sum of incident Bernoulli Kullback--Leibler divergences plus
an $O(p)$ completed-diagonal term. Row separation and community-size
bounds give a lower bound $c_{\rm gap}np$.
At a regular target, \cref{restore:fit-ordinary} bounds the fitted
remainder by $C\{1+np\Mis(\widehat z,z)\}$, with $C$ independent
of the input labels. First choose the fixed $c$ in the hypothesis
sufficiently small compared with this population margin, then take
$np$ sufficiently large. The remainder is at most $c_{\rm gap}np/4$.
Outside the auxiliary hollowing set, a new error therefore requires
\[
 \norm{[(R_1-ZPZ^\top)Z]_{i\cdot}}_2\ge c'np
\]
for a fixed $c'>0$. Summing the squared requirements divided by
$G_i\le Cnp$ and using \eqref{restore:fit-basin-action} bounds the
number of such errors by $C/p$. Including the counting exceptions
and dividing by $n$ proves the asserted output rate. Its constant
does not depend on the input misclassification rate within the
specified small interval.

Choose a single fixed upper constant larger than $C_0$ and the
one-update output constant. For all sufficiently large $n$, its ratio
to $np$ is smaller than the fixed $c$ chosen above. The first input
satisfies this bound by hypothesis, and every later input satisfies
it by the one-update conclusion. Induction controls all iterates
without using any property of the initialization rule. The estimates
are deterministic on the aggregate event and use the same true
community numbering. A matching of the initial labels is chosen once
and propagated to subsequent labels; even in a tie, the selected
label remains a maximizer after this relabeling. No probabilistic
independence is inferred from that matching. For sufficiently large
$n$, the aligned error count is below $n/(2\beta K)$. This also makes
the matching uniquely minimizing: a different permutation loses at
least $n/(\beta K)-|E|$ currently correct labels and can gain at most
$|E|$ labels. Thus the same aligned count equals $n\Mis$ at each
iterate, rather than silently switching to a new optimizing permutation.

Along this sequence $np\Mis(\widehat z,z)$ is uniformly bounded.
Equation~\eqref{restore:fit-ordinary-score} thus has a single bounded
remainder for all input iterates. A wrong assignment implies
\eqref{restore:fit-canonical-error}. The same event covers finite
adaptive stopping, with no probability union over iterations.
\end{proof}

\section{Exponential accuracy of the aggregate event}
\label{sec:high-range}

\begin{lemma}[Exponentially accurate aggregate geometry above a logarithmic scale]
\label{restore:high-aggregate}
Fix any $A_0>0$ and $s>0$. Under the fixed positive compact structural
conditions, if ${np}\ge A_0\log n$ and $np^2\le C_1$, there is an aggregate
event $\mathcal E_s$ with
\[
 \PP(\mathcal E_s^c)\le C_s e^{-s{np}}
\]
on which the capped-adjacency geometry, weighted degree-energy bounds,
two-sided total-degree bounds, and true block-total bounds used in
\cref{restore:fit-geometry-one,restore:fit-geometry-two,restore:fit-block-event}
hold for the full graph and every graph obtained by deleting at most three
vertices. The constants may depend on $A_0,s,C_1$ and the structural
bounds. No condition is imposed on the smallest raw degree or the
largest spectral row.
\end{lemma}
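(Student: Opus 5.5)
The plan is to rebuild the aggregate event of \cref{prop:sparse-floor} component by component, replacing each polynomial failure bound by one of the form $C_se^{-snp}$. This is possible because $np\ge A_0\log n$ implies $n^{-r}\le e^{-(r/A_0)np}$. Hence every ingredient whose failure probability is $C_rn^{-r}$, with $r$ free, becomes exponentially small in $np$ at any prescribed rate once we take $r=sA_0+1$. The components are as follows.
\begin{itemize}
\item The two-sided total-degree bounds already fail with probability $e^{-cn^2p}\le e^{-snp}$.
\item The count of vertices with $D_i>2np$ fails with probability $e^{-cn}$.
\item The regularized operator-norm bound of \citet{llv2017} has failure probability $C_rn^{-r}$ for arbitrary fixed $r$. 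This is exactly the form that converts.
\end{itemize}
So the first step is to fix $r=r(s,A_0)$ and apply \cref{prop:sparse-floor} with that parameter. This yields \eqref{restore:fit-geometry-one}, the degree-energy and block-residual parts of \eqref{restore:fit-geometry-two}, and the eigenvalue-separation and basis statements, all with failure at most $C_se^{-snp}$.

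The second step handles the block-total bound \eqref{restore:fit-block-event}. Each entry $[Z^\top(A-ZPZ^\top)Z]_{k\ell}$ is, up to the $O(n p)$ completed-diagonal term, a centered sum of at most $n^2$ independent Bernoulli variables with total variance $\lesssim n^2p$. Bernstein's inequality at deviation $Cn$ gives a tail $\exp\{-cC^2n^2/(n^2p+Cn)\}=\exp\{-c'C n\}$ when $p\lesssim 1$. Since $n\ge np$, choosing $C$ large relative to $s$ gives failure $e^{-snp}$. A union over the fixed $K^2$ pairs costs only a constant.

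The third step covers the deletion family. There are at most $\binom{n}{3}\le n^3$ graphs obtained by deleting at most three vertices. Each deleted graph satisfies the same structural class with slightly changed sizes, so each satisfies the same bounds with the same constants. A union bound multiplies the failure by $n^3\le e^{(3/A_0)np}$, which is absorbed by raising $s$ to $s+3/A_0$ at the start. The main obstacle is checking that the \citet{llv2017} constant is genuinely uniform in $r$ for polynomial $r$, and that no step of \cref{prop:sparse-floor} hides a fixed $n^{-c}$ rather than a free $n^{-r}$. If some step does, I would instead redo that spectral-norm bound via a Talagrand/bounded-differences concentration of $\norm{W_h(A-\E A)W_h}_{\op}$ around its mean at deviation $C\sqrt{np}$. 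Its Lipschitz-type variance is $O(1)$ after capping, giving tails $e^{-c C^2np}$ directly. All remaining deterministic consequences are then identical to those in \cref{prop:sparse-floor}, and none of them involve the minimum raw degree or the largest spectral row.
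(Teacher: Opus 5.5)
\textbf{The conversion in your first step goes the wrong way.} The hypothesis $np\ge A_0\log n$ says $\log n\le np/A_0$, so $n^{-r}=e^{-r\log n}\ge e^{-(r/A_0)np}$, not $\le$. The condition $np^2\le C_1$ still allows $np$ as large as order $\sqrt n$ (for example $np=n^{1/3}$). Then the target $e^{-snp}$ is far smaller than every fixed power of $n$, and no fixed $r$ turns the $C_rn^{-r}$ event of \cref{prop:sparse-floor} into a $C_se^{-snp}$ event. Letting $r$ grow like $np/\log n$ does not help either, because the constant in the \citet{llv2017} norm bound grows with $r$.

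Your inequality holds only in the complementary range $np\le A_0\log n$. That is exactly where the paper's final proof uses the polynomial event directly. This lemma is needed because, above the logarithmic scale, the oracle risk can be as small as $e^{-Cnp}$. As a result, the operator-norm bound in \eqref{restore:fit-geometry-one} is not obtained with the required failure probability, and neither are the energy and residual bounds of \eqref{restore:fit-geometry-two} built on it. Your union over $n^3\le e^{3np/A_0}$ minors is correctly oriented, but it multiplies per-graph bounds that were never exponential.

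\textbf{The missing idea is that above $\log n$ no regularization is needed for concentration.} The paper bounds the raw centered matrix, $\PP\{\norm{A-ZPZ^\top}_{\op}>L_s\sqrt{np}\}\le e^{-(s+2)np}$, using \citet{bvh} and absorbing the prefactor $n$ via $\log n\le np/A_0$. Equivalently, you could use Talagrand concentration of the convex $\sqrt2$-Lipschitz map $A\mapsto\norm{A-\E A}_{\op}$ around a median $\lesssim\sqrt{np}+\sqrt{\log n}\lesssim\sqrt{np}$. The capped bounds then follow deterministically:
\begin{itemize}
\item On the total-degree event $G\ge\tau\asymp np$, so $\norm{G^{-1/2}(A-ZPZ^\top)G^{-1/2}}_{\op}\le C_s(np)^{-1/2}$.
\item Applying this to $G^{1/2}\one$ gives the degree energy.
\item The cap energy follows coordinatewise, because $\tau$ exceeds the population row sums.
\item The gap and residual follow as in \cref{prop:sparse-floor}.
\end{itemize}
Minors then need no union at all: centered principal submatrices have no larger norm, and degrees and block totals change by only $O(n)$.

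\textbf{Your Talagrand fallback points the right way but targets the wrong object.} The matrix $W_h(A-\E A)W_h$, with graph-dependent $W_h$, is not convex in the edges and has no evident $O(1)$ Lipschitz constant. Apply the concentration to the raw matrix instead, and transfer to the capped scale deterministically.

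\textbf{Your Bernstein step is miscomputed.} With deviation $Cn$ and variance $\asymp n^2p$, the exponent is $C^2n/(np+C)\asymp C^2/p$, not $c'Cn$. Turning $C^2/p$ into at least $s\,np$ uses $1/p\ge np/C_1$, which is where the upper sparsity condition enters.
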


\begin{proof}
The centered bounded-entry norm inequality of \citet[Corollary 3.12 and Remark 3.13]{bvh} gives
\[
 \PP\{\norm{A-{ZPZ^\top}}_{\op}>L_s\sqrt {np}\}
 \le e^{-(s+2){np}}
\]
after choosing a sufficiently large fixed $L_s$. Indeed, the row
variance sum is $O({np})$, the entry bound is one, and the prefactor $n$
in the norm-tail inequality is absorbed using $\log n\le {np}/A_0$.
The completed-mean diagonal has norm $O(p)$ and is absorbed in the
same threshold. No minimum raw degree is imposed.

The Bernoulli transform of the total number of edges gives, with
failure probability at most $Ce^{-c{n^2p}}$,
\[
 c{n^2p}\le\sum_iD_i\le C{n^2p}.
\]
For each of the fixed $K^2$ true block totals, Bernstein's inequality
for the independent upper-triangular edges, whose coefficients have
absolute value at most two, gives
\[
 \PP\left\{\left|[Z^\top(A-{ZPZ^\top})Z]_{k\ell}\right|>Ln\right\}
 \le 2\exp\left\{-\frac{cL^2n}{{np}+L}\right\}.
\]
The deterministic completed diagonal costs $O({np})$ and is included
by increasing $L$. Since ${np}\to\infty$ and $np^2\le C_1$,
the exponent is at least $c'L^2{np}/C_1$ for all sufficiently large $n$.
Increasing the fixed $L=L_s$ therefore makes the failure of all block
totals at most $C_se^{-(s+2){np}}$. The total-degree failure is also
bounded by $Ce^{-(s+2){np}}$ for large $n$.

These three full-graph events imply the required events on every
fixed-depth minor without a degree-wise probability union. To see this,
let $J=[n]\setminus T$ with $|T|\le3$, set $A_J=A_{JJ}$ and
$Z_J=Z_{J\cdot}$, and recompute all degrees and the cap
threshold on $J$. The centered matrix $A_J-{Z_JPZ_J^\top}$ is a principal
submatrix of $A-{ZPZ^\top}$, so its norm is at most $L_s\sqrt {np}$.
Removing $T$ changes the total raw degree by at most $2|T|n$;
as ${np}\to\infty$, the two-sided bound by fixed multiples of ${n^2p}$
therefore persists uniformly over $T$. Removing $T$ changes a true
block total of $A-{ZPZ^\top}$ by $O(|T|n)$ deterministically, since
the entries have bounded absolute value. Its $O_s(n)$ bound persists
as well. The surviving group masses differ from their original
values by $O(1)$ and remain comparable to $n$.

The recomputed average degree is consequently comparable to ${np}$.
For the fixed sufficiently large cap multiplier $b_0$, its threshold
$\tau_J=b_0\overline D_J$ lies between positive fixed multiples of
${np}$ and above every completed row sum $\sum_{j\prime\in J}P_{z_j z_{j\prime}}$.
Thus $G_{J,j}=\max(D_{J,j},\tau_J)\ge c{np}$ and
$\sum_jG_{J,j}\le C{n^2p}$. The raw norm bound immediately gives
\begin{align*}
 \norm{G_J^{-1/2}(A_J-{Z_JPZ_J^\top})G_J^{-1/2}}_{\op}
   &\le C_s{(np)}^{-1/2},\\
 \sum_{j\in J}\frac{(D_{J,j}-\sum_{j\prime\in J}P_{z_j z_{j\prime}})^2}{G_{J,j}}
   &=\norm{G_J^{-1/2}(A_J-{Z_JPZ_J^\top})\one}_2^2
     \le C_sn.
\end{align*}
The cap exceeds the completed population degrees, so coordinatewise
\[
 \frac{(G_{J,j}-\tau_J)^2}{G_{J,j}}
 \le\frac{(D_{J,j}-\sum_{j\prime\in J}P_{z_j z_{j\prime}})^2}{G_{J,j}}.
\]
Indeed, the left side is zero when $D_{J,j}\le\tau_J$; otherwise
$0\le D_{J,j}-\tau_J\le D_{J,j}-\sum_{j\prime\in J}P_{z_j z_{j\prime}}$. This proves the
other weighted energy bound. The order-${np}$ signal gap of the
capped adjacency matrix (a fixed gap after dividing by $\tau_J$)
and the residual
$\norm{(I-U_JU_J^\top)G_J^{1/2}Z_J}_F\le C_s\sqrt n$
now follow by the deterministic Gram-matrix and projection arguments
in \cref{prop:sparse-floor}. Those arguments use only
$G_{J,j}\ge\tau_J\asymp {np}$, total degree, the displayed rescaled
noise bound, and the energy relative to $\tau_J$, all just proved.
Intersecting the stated aggregate consequences defines
$\mathcal E_s$. The three full-graph events imply it simultaneously
for every indicated minor, proving its claimed failure bound.

For a deleted graph used as independent training data, form the
analogous event using that graph and its further minors. The same
argument gives failure probability $C_se^{-s{np}}$, and this event is
measurable from the training graph alone. The full-graph common event
is used only for deterministic implications, never as a condition
that would preserve independence of an incident row.
\end{proof}

\section{Completion of the oracle-risk bound}
\label{sec:probability}

The final step uses a deleted graph only as a proof device. The
implemented procedure retains one full-graph eigensystem.
For each target $i$, put $B=A_{-i,-i}$ and recompute its degrees,
cap threshold, weights and projection $H_B$. Throughout this section, all constants
are uniform over the fixed structural class.

\begin{lemma}[Ordinary Bernoulli tilted cancellation]
\label{lem:sbm-floor-cancellation}
Let $w$ be a true block-constant Bernoulli log-odds vector on the
deleted graph and put
\[
 f=w-\bar w\one,\qquad \varepsilon=(H_B-I)f,
 \qquad \bar w=(n-1)^{-1}\one^\top w.
\]
On the deleted aggregate event, under the exact Chernoff tilt of
the raw-row oracle contrast, its Bernoulli probabilities $r_j$
satisfy
\begin{equation}\label{eq:sbm-floor-tilt}
 \left|\sum_{j\ne i}r_j\varepsilon_j\right|\le C,
 \qquad \sum_{j\ne i}r_j\varepsilon_j^2\le C.
\end{equation}
\end{lemma}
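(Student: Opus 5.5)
Our plan is to exploit that $r_j$ is block constant up to a negligible one-coordinate correction: under the exact Chernoff tilt of the raw-row oracle contrast the coordinates stay independent Bernoulli with
\[
 r_j=\frac{P_{kz_j}^{1-\alpha^*}P_{\ell z_j}^{\alpha^*}}
 {P_{kz_j}^{1-\alpha^*}P_{\ell z_j}^{\alpha^*}+(1-P_{kz_j})^{1-\alpha^*}(1-P_{\ell z_j})^{\alpha^*}},
\]
which depends on $j$ only through $z_j$. It is therefore a block-constant vector $g$ restricted to $j\ne i$, with $\norm g_\infty\le Cp$, because probabilities are comparable to $p$ and $\alpha^*\in[c,1-c]$ by the application of \cref{lem:sharp}. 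The quadratic bound will follow from the weighted estimate in \eqref{eq:floor-weighted-errors} applied to the deleted graph, and the linear bound from the projection structure of $H_B$.

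\textbf{Quadratic term.} Since $r_j\le Cp$ and $f=w-\bar w\one$ is block constant with bounded entries, we have $\sum_jr_j\varepsilon_j^2\le Cp\norm{(I-H_B)f}_2^2$. The second bound in \eqref{eq:floor-weighted-errors}, which holds for the recomputed deleted graph on its aggregate event by \cref{prop:sparse-floor}, gives $\norm{(I-H_B)f}_2^2\le C/p$. Hence the quadratic sum is at most $C$.

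\textbf{Linear term.} We write $\sum_jr_j\varepsilon_j=g^\top(H_B-I)f$ and use that $H_B=\Delta^{-1/2}\Pi_B\Delta^{1/2}$ is the $\Delta$-orthogonal projector. Setting $u=\Delta^{-1}g$, the sum equals $\langle\Delta u,(H_B-I)f\rangle=-\langle (I-H_B)u,(I-H_B)f\rangle_\Delta$. Self-adjointness of $H_B$ in $\langle\cdot,\cdot\rangle_\Delta$ moves $H_B$ across, and idempotence then yields the product of the two residuals. Cauchy--Schwarz bounds this by $\norm{(I-H_B)u}_\Delta\norm{(I-H_B)f}_\Delta$. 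The second factor is at most $C\sqrt n$ by the first part of \eqref{eq:floor-weighted-errors}.

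\textbf{Main obstacle.} The difficulty is that $u=\Delta^{-1}g$ is \emph{not} block constant. We would handle it by comparing it with the block-constant vector $\tau_B^{-1}g$. First, $(I-H_B)\tau_B^{-1}g$ has $\Delta$-norm at most $C\tau_B^{-1}p\sqrt n\le C\sqrt n/n$. Second, the difference $u-\tau_B^{-1}g$ is supported where $\Delta_j>\tau_B$, and there $|u_j-\tau_B^{-1}g_j|\le Cp(\Delta_j-\tau_B)/(\Delta_j\tau_B)$. Since $I-H_B$ is a $\Delta$-contraction, its image has $\Delta$-norm squared at most $Cp^2\tau_B^{-2}\sum_j(\Delta_j-\tau_B)^2/\Delta_j\le C n p^2/(np)^2=C/n$, by the degree-energy bound \eqref{eq:floor-degree-energy} for $B$. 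Thus $\norm{(I-H_B)u}_\Delta\le C/\sqrt n$, and the linear sum is at most $C$.

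\textbf{Remaining details.} The deleted index $i$ and the completed-diagonal discrepancy between $w$ on $[n]$ and on $[n]\setminus\{i\}$ shift $r$ only in one coordinate and cost $O(p\norm{\varepsilon}_\infty)$. This needs care only if $\varepsilon_i$ is undefined, and it is not, since $\varepsilon$ lives on $J$.
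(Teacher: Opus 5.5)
Your proposal is correct and is essentially the paper's proof. You use the $G$-orthogonality of $I-H_B$ to turn $r^\top\varepsilon$ into a product of residuals, and your correction piece $(\Delta^{-1}-\tau_B^{-1})g$ is exactly the paper's term $\tau^{-1}r^\top(\tau I-G)\varepsilon$, controlled by the same degree-energy bound; the quadratic term likewise comes from $r_j\le Cp$ and $\norm{\varepsilon}_2^2\le C/p$.

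Your closing ``one-coordinate correction'' remark is unnecessary, because the tilted probabilities are exactly block constant on $J$. That is just as well, since a cost of order $p\norm{\varepsilon}_\infty$ would not be affordable: $\varepsilon$ has no uniform sup bound here.
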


\begin{proof}
The tilted probabilities $r_j$ are constant within each true block
and satisfy $r_j\le Cp$. Write $\tau$ and $G$ for the threshold
and diagonal matrix recomputed from $B$. Since $r$ is block constant with $\norm r_\infty\le Cp$,
weighted orthogonality gives directly
\[
 r^\top\varepsilon
 =-\frac1\tau\big\langle(I-H_B)r,(I-H_B)f\big\rangle_G
       +\frac1\tau r^\top(\tau I-G)\varepsilon.
\]
The residual bounds give
$\norm{(I-H_B)r}_G\le Cp\sqrt n$ and
$\norm{(I-H_B)f}_G\le C\sqrt n$.
Also $\sum_j(G_j-\tau)^2/G_j\le Cn$, so
\[
 \norm{G^{-1/2}(\tau I-G)r}_2\le Cp\sqrt n,
 \qquad \norm\varepsilon_G\le C\sqrt n.
\]
Dividing either product by $\tau\asymp np$ gives $O(1)$.
This proves the first bound. Since $r_j\le Cp$ and
$\norm\varepsilon_2^2\le \frac{C}{p}$, the second bound follows as well.
The deterministic nonedge intercept remains in the oracle score
and is not absorbed into this cancellation.
When the cap is inactive on the deleted graph, $G=\tau I$ and
$H_B=\Pi_B=U_BU_B^\top$. The second term vanishes, leaving the
ordinary Euclidean adjacency identity
\[
 r^\top\varepsilon
 =-\big\langle(I-\Pi_B)r,(I-\Pi_B)f\big\rangle.
\]
\end{proof}

\begin{proof}[Proof of \cref{restore:sbm-theorem}]
Fix $C_0$ from the statement and write
\[
 \mathcal H_0=\{\Mis(\widehat z^{(0)},z)\le C_0/(np)\}.
\]
This event depends on the full graph and any initialization randomness.
We never condition on $\mathcal H_0$. Since $0\le\Mis\le1$, its
complement will contribute at most $\PP(\mathcal H_0^c)$ once.
Fix $A_0>0$. First consider ${np}\le A_0\log n$, with ${np}\to\infty$.
Let $\mathcal E$ collect the aggregate conditions
\eqref{restore:fit-geometry-one}--\eqref{restore:fit-block-event},
two-sided total degree, and their recomputed versions after deletion
of at most three vertices. For each prescribed fixed $r>0$,
\cref{prop:sparse-floor} gives these geometric conditions with
failure probability at most $C_rn^{-r}$, by increasing its
probability parameter to cover the polynomial number of minors.
For any true block total, independent-edge Bernstein gives
\[
 \PP\{|[Z^\top(A-{ZPZ^\top})Z]_{k\ell}|>Ln\}
 \le2\exp\{-cL^2n/({np}+L)\}.
\]
In the present range this is smaller than any fixed inverse power
of $n$, as is the total-degree failure. The same holds for the
required minors. For a target $i$, define $\mathcal E_i$ using
only its deleted graph and its further deletions. Use the same fixed
bounds in these definitions, so that $\mathcal E\subseteq\mathcal E_i$
for every $i$, and $\mathcal E_i$ is measurable from $B=A_{-i,-i}$.
Then
\begin{equation}\label{eq:sbm-geometry-payment}
 \PP(\mathcal E^c)+\frac1n\sum_i\PP(\mathcal E_i^c)
 \le C_rn^{-r}.
\end{equation}

On $\mathcal H_0\cap\mathcal E$, \cref{restore:fit-invariant}
gives the initial partition and every subsequent iterate a common
bound $\Mis(\widehat z,z)\le C/(np)$. The constant depends on
$C_0$, not on the rule producing the labels. At target
$i$ impose only $D_i\le L_D{np}$, its local spectral-row condition
\eqref{restore:fit-local-row}, and the requirement that the
incident row hits no exceptional deleted spectral row. By
\cref{restore:tail-energy-lemma,restore:tail-hit-corollary,restore:tail-target-corollary},
the average probability cost of these spectral exceptions is at
most $C_se^{-s{np}}$ for any prescribed fixed $s$, on choosing fixed
thresholds sufficiently large. Indeed, $G_j\ge\tau\asymp {np}$ gives
\eqref{restore:loo-hit} outside a deleted exceptional set
$\{j:\sqrt n\norm{(U_B)_{j\cdot}}_2>R\}$, whose expected hit
mass is controlled by \eqref{restore:tail-coefficient-count}.
The degree tail has the same bound by the single-row Bernoulli
Chernoff inequality. Polynomial factors in ${np}$ are absorbed by
first choosing a larger exponential rate. These are averages of
target probabilities, without a union over all tested vertices.

Let $\mathcal R_i$ denote the stated local target and hit-row conditions.
On $\mathcal H_0\cap\mathcal E\cap\mathcal R_i$, the deterministic
fitted-score bound and \cref{restore:loo-corrected} imply that an
aligned error at $i$ after any positive number of updates requires,
for some $\ell\ne k=z_i$,
\begin{equation}\label{eq:sbm-independent-score}
 T_{k\ell}+\sum_{j\ne i}A_{ij}\varepsilon_j\le C,
 \qquad \varepsilon=(H_B-I)(w_{k\ell,-i}-\bar w_{k\ell,-i}\one).
\end{equation}
Here $T_{k\ell}$ is realized from this target's actual row, with its
full nonedge term and $n_k-1$ own-community trials. Its law is the
community-$k$ law of Section~\ref{sec:statements}; no independent
copy is substituted. For each fixed competitor, the displayed
coefficients $\varepsilon=\varepsilon^{k\ell}(B)$ depend only on
$B$ and the fixed true parameters and labels, not on any fitted
partition. The uniform error threshold $C$ depends on $C_0$ and
fixed local bounds, never on the initial partition or the stopping time.

The precise conditioning step is
\begin{equation}\label{eq:same-graph-conditioning}
\begin{aligned}
 &\PP\{\text{aligned error at }i,\mathcal H_0,\mathcal E,\mathcal R_i\}\\
 &\quad\le\sum_{\ell\ne k}\E\!\left[\ind{\mathcal E_i}
  \PP_k\!\left\{T_{k\ell}+\sum_{j\ne i}A_{ij}\varepsilon_j^{k\ell}(B)
                 \le C\,\middle|\,B\right\}\right].
\end{aligned}
\end{equation}
First use the deterministic inclusion, valid for every successful
initial partition and every refit. Then discard $\mathcal H_0$,
$\mathcal E$ and $\mathcal R_i$ from the error event, retaining only
$\mathcal E_i$ because $\mathcal E\subseteq\mathcal E_i$.
The resulting event has no initialization or stopping-time dependence.
Only now condition on $B$. For fixed $z,P$, the incident edges are
independent Bernoulli variables independent of $B$, with unchanged
probabilities $P_{kz_j}$. This independence is not claimed after
conditioning on the initial labels, their matching, initialization
success, full degrees, or any full-graph spectral event.
The right side of \eqref{eq:same-graph-conditioning} is therefore
valid after integrating all initialization randomness, without any
leave-one-out stability assumption on the initializer.

The tilted mean and quadratic bounds in
\cref{lem:sbm-floor-cancellation} give the moment hypotheses of
\cref{lem:sparse-truncated-transfer}. Its exceptional-coefficient
hit term is at most $C_se^{-s{np}}$ by
\cref{restore:tail-hit-corollary}, after selecting a fixed
coefficient threshold. That truncation is used only in the proof.
Apply the sharp transfer to \eqref{eq:sbm-independent-score} using
the exact self-excluded row law. By the one-coordinate comparison in
Appendix~\ref{sec:sharp}, its exponential risk factor can be replaced
by $e^{-I_{k\ell}}$ at a uniformly bounded multiplicative cost.
Averaging targets and taking the fixed-$K$ union over competitors,
we obtain
\begin{equation}\label{eq:sbm-final-payment}
 \begin{split}
 \E\Mis(\widehat z,z)
 \lesssim{}&\sum_{k=1}^K\frac{n_k}{n}
                     \sum_{\ell\ne k}e^{-I_{k\ell}}\\
 &+e^{-snp}+\PP(\mathcal H_0^c)
       +\PP(\mathcal E^c)+\frac1n\sum_i\PP(\mathcal E_i^c).
 \end{split}
\end{equation}
Here the implicit factor may also depend on the chosen fixed
$s$ and the associated fixed local thresholds, never on $n$ or the
number of refits. We grouped the local testing terms by $k=z_i$;
the graph-dependent exceptional events retain their vertex indices.
The comparison in \eqref{eq:oracle-pairs} bounds the testing term by
a constant multiple of $R_{\mathrm{oracle}}$, for the specified tie rule.
The initialization complement was charged using $\Mis\le1$;
its coefficient can be one before collecting the other fixed factors.
There is no factor $n$ or number of iterations multiplying it.

On the compact structural class, choose a fixed $C$ such that
\[
 I_{k\ell}-\frac12\log(np)\le Cnp
\]
uniformly over the tested pairs. Consequently
\[
 R_{\mathrm{oracle}}\gtrsim\exp\{-Cnp-\tfrac12\log(np)\}.
\]
Choose $s>C+1$ and $r>A_0(C+1)+2$. When
$np\le A_0\log n$,
\[
 \sqrt{np}\left(e^{-(s-C)np}+n^{-r}e^{Cnp}\right)\longrightarrow0,
\]
which pays the spectral and geometry remainders in
\eqref{eq:sbm-final-payment}. The initialization failure term remains
explicit, as required in \eqref{eq:target}.

For ${np}>A_0\log n$, use \cref{restore:high-aggregate} in place of
the polynomially accurate geometry. It gives
$\PP(\mathcal E^c)+n^{-1}\sum_i\PP(\mathcal E_i^c)\le C_{s'}e^{-s'{np}}$ for any fixed
$s'>0$, with training events still measurable from the respective
deleted graphs. The row-tail proof uses deterministic aggregate
geometry and applies innovation concentration before restricting
to that event. Its estimates, the local LOO comparison and the
fitting argument therefore remain valid for this stronger event.
Equation \eqref{eq:sbm-final-payment} follows with the new geometry
remainder. Choosing $s,s'>C+1$ pays both exponential terms.
The two proof ranges cover every sequence ${np}\to\infty$ under
$np^2\le C_1$, proving the theorem without a logarithmic degree
lower bound, with the separate initialization failure term retained.
\end{proof}

\begin{proof}[Proof of \cref{cor:kmpp-oracle}]
Choose the polynomial or exponential aggregate event used above, with
fixed accuracy parameters large enough that its complement has
probability $O(R_{\mathrm{oracle}})$. Its bounds also give
$\bar D\ge cnp$ for a fixed $c>0$. Apply
\cref{lem:kmpp-initializer} with the corresponding fixed $C_0$.
The restart count is a measurable function of $A$, so conditional on
$A$ the runs remain independent and the lemma yields
\[
 \PP\!\left\{\Mis(\widehat z^{(0)},z)>\frac{C_0}{np}\right\}
 \le \PP(\mathcal E^c)+\E\!\left[\ind{\mathcal E}2^{-m(A)}\right].
\]
For $m(A)=\max\{1,\lceil C\bar D\rceil\}$, the second term is at most
$\exp\{-cC(\log2)np\}$. The lower bound
$R_{\mathrm{oracle}}\gtrsim (np)^{-1/2}\exp\{-C'np\}$ in the
preceding proof is uniform over the structural class. Choosing the
fixed restart multiplier large enough that $cC\log2>C'+1$ makes
the seeding contribution $O(R_{\mathrm{oracle}})$. Substitution into
\cref{restore:sbm-theorem} proves the corollary. This argument uses
only $np\to\infty$, not a logarithmic density floor. It controls the
additional seeding randomness; it does not replace the graph-event
estimates or the uniform fitted-score and leave-one-out arguments.
\end{proof}

\bibliographystyle{plainnat}
\bibliography{reference}

\begin{thebibliography}{40}
\providecommand{\natexlab}[1]{#1}
\providecommand{\url}[1]{\texttt{#1}}
\expandafter\ifx\csname urlstyle\endcsname\relax
  \providecommand{\doi}[1]{doi: #1}\else
  \providecommand{\doi}{doi: \begingroup \urlstyle{rm}\Url}\fi

\bibitem[Abbe(2018)]{abbe2018}
Emmanuel Abbe.
\newblock Community detection and stochastic block models: Recent developments.
\newblock \emph{Journal of Machine Learning Research}, 18\penalty0
  (177):\penalty0 1--86, 2018.
\newblock URL \url{https://jmlr.org/papers/v18/16-480.html}.

\bibitem[Abbe and Sandon(2015)]{abbesandon2015}
Emmanuel Abbe and Colin Sandon.
\newblock Community detection in general stochastic block models: Fundamental
  limits and efficient algorithms for recovery.
\newblock In \emph{Proceedings of the 56th Annual IEEE Symposium on Foundations
  of Computer Science}, pages 670--688, 2015.
\newblock URL \url{https://arxiv.org/abs/1503.00609}.

\bibitem[Abbe et~al.(2016)Abbe, Bandeira, and Hall]{abh2016}
Emmanuel Abbe, Afonso~S. Bandeira, and Georgina Hall.
\newblock Exact recovery in the stochastic block model.
\newblock \emph{IEEE Transactions on Information Theory}, 62\penalty0
  (1):\penalty0 471--487, 2016.
\newblock URL \url{https://arxiv.org/abs/1405.3267}.

\bibitem[Abbe et~al.(2020)Abbe, Fan, Wang, and Zhong]{afwz}
Emmanuel Abbe, Jianqing Fan, Kaizheng Wang, and Yiqiao Zhong.
\newblock Entrywise eigenvector analysis of random matrices with low expected
  rank.
\newblock \emph{The Annals of Statistics}, 48\penalty0 (3):\penalty0
  1452--1474, 2020.
\newblock \doi{10.1214/19-AOS1854}.
\newblock URL \url{https://arxiv.org/abs/1709.09565}.

\bibitem[Amini and Levina(2018)]{aminilevina2018}
Arash~A. Amini and Elizaveta Levina.
\newblock On semidefinite relaxations for the block model.
\newblock \emph{The Annals of Statistics}, 46\penalty0 (1):\penalty0 149--179,
  2018.
\newblock URL \url{https://arxiv.org/abs/1406.5647}.

\bibitem[Amini et~al.(2013)Amini, Chen, Bickel, and Levina]{amini2013}
Arash~A. Amini, Aiyou Chen, Peter~J. Bickel, and Elizaveta Levina.
\newblock Pseudo-likelihood methods for community detection in large sparse
  networks.
\newblock \emph{The Annals of Statistics}, 41\penalty0 (4):\penalty0
  2097--2122, 2013.
\newblock URL \url{https://arxiv.org/abs/1207.2340}.

\bibitem[Arthur and Vassilvitskii(2007)]{arthur2007}
David Arthur and Sergei Vassilvitskii.
\newblock {k-means++}: The advantages of careful seeding.
\newblock In \emph{Proceedings of the Eighteenth Annual ACM-SIAM Symposium on
  Discrete Algorithms}, pages 1027--1035. Society for Industrial and Applied
  Mathematics, 2007.

\bibitem[Bandeira and van Handel(2016)]{bvh}
Afonso~S. Bandeira and Ramon van Handel.
\newblock Sharp nonasymptotic bounds on the norm of random matrices with
  independent entries.
\newblock \emph{The Annals of Probability}, 44\penalty0 (4):\penalty0
  2479--2506, 2016.
\newblock \doi{10.1214/15-AOP1025}.
\newblock URL \url{https://arxiv.org/abs/1408.6185}.
\newblock Corollary 3.12 and Remark 3.13; arXiv:1408.6185v4.

\bibitem[Bickel and Chen(2009)]{bickelchen2009}
Peter~J. Bickel and Aiyou Chen.
\newblock A nonparametric view of network models and {Newman--Girvan} and other
  modularities.
\newblock \emph{Proceedings of the National Academy of Sciences}, 106\penalty0
  (50):\penalty0 21068--21073, 2009.
\newblock \doi{10.1073/pnas.0907096106}.
\newblock URL \url{https://pmc.ncbi.nlm.nih.gov/articles/PMC2795514/}.

\bibitem[Bickel et~al.(2013)Bickel, Choi, Chang, and Zhang]{bickel2013}
Peter~J. Bickel, David Choi, Xiangyu Chang, and Hai Zhang.
\newblock Asymptotic normality of maximum likelihood and its variational
  approximation for stochastic blockmodels.
\newblock \emph{The Annals of Statistics}, 41\penalty0 (4):\penalty0
  1922--1943, 2013.
\newblock \doi{10.1214/13-AOS1124}.
\newblock URL \url{https://arxiv.org/abs/1207.0865}.

\bibitem[Chin et~al.(2015)Chin, Rao, and Vu]{chin2015}
Peter Chin, Anup Rao, and Van Vu.
\newblock Stochastic block model and community detection in sparse graphs: A
  spectral algorithm with optimal rate of recovery.
\newblock In \emph{Proceedings of the 28th Conference on Learning Theory},
  volume~40 of \emph{Proceedings of Machine Learning Research}, pages 391--423,
  2015.
\newblock URL \url{https://proceedings.mlr.press/v40/Chin15.html}.

\bibitem[Decelle et~al.(2011)Decelle, Krzakala, Moore, and
  Zdeborov\'{a}]{decelle2011}
Aur\'{e}lien Decelle, Florent Krzakala, Cristopher Moore, and Lenka
  Zdeborov\'{a}.
\newblock Asymptotic analysis of the stochastic block model for modular
  networks and its algorithmic applications.
\newblock \emph{Physical Review E}, 84:\penalty0 066106, 2011.
\newblock \doi{10.1103/PhysRevE.84.066106}.
\newblock URL \url{https://arxiv.org/abs/1109.3041}.

\bibitem[Gao et~al.(2017)Gao, Ma, Zhang, and Zhou]{gao2017}
Chao Gao, Zongming Ma, Anderson~Y. Zhang, and Harrison~H. Zhou.
\newblock Achieving optimal misclassification proportion in stochastic block
  models.
\newblock \emph{Journal of Machine Learning Research}, 18\penalty0
  (60):\penalty0 1--45, 2017.
\newblock URL \url{https://jmlr.org/papers/v18/16-245.html}.

\bibitem[Gao et~al.(2018)Gao, Ma, Zhang, and Zhou]{gmzz2018}
Chao Gao, Zongming Ma, Anderson~Y. Zhang, and Harrison~H. Zhou.
\newblock Community detection in degree-corrected block models.
\newblock \emph{The Annals of Statistics}, 46\penalty0 (5):\penalty0
  2153--2185, 2018.
\newblock \doi{10.1214/17-AOS1615}.
\newblock URL \url{https://arxiv.org/abs/1607.06993}.

\bibitem[Hajek et~al.(2016)Hajek, Wu, and Xu]{hajek2016}
Bruce Hajek, Yihong Wu, and Jiaming Xu.
\newblock Achieving exact cluster recovery threshold via semidefinite
  programming.
\newblock \emph{IEEE Transactions on Information Theory}, 62\penalty0
  (5):\penalty0 2788--2797, 2016.
\newblock \doi{10.1109/TIT.2016.2546280}.
\newblock URL
  \url{https://experts.illinois.edu/en/publications/achieving-exact-cluster-recovery-threshold-via-semidefinite-progr/}.

\bibitem[Holland et~al.(1983)Holland, Laskey, and Leinhardt]{holland1983}
Paul~W. Holland, Kathryn~Blackmond Laskey, and Samuel Leinhardt.
\newblock Stochastic blockmodels: First steps.
\newblock \emph{Social Networks}, 5\penalty0 (2):\penalty0 109--137, 1983.
\newblock \doi{10.1016/0378-8733(83)90021-7}.
\newblock URL
  \url{https://www.sciencedirect.com/science/article/pii/0378873383900217}.

\bibitem[Jin(2015)]{jin2015}
Jiashun Jin.
\newblock Fast community detection by {SCORE}.
\newblock \emph{The Annals of Statistics}, 43\penalty0 (1):\penalty0 57--89,
  2015.
\newblock \doi{10.1214/14-AOS1265}.
\newblock URL \url{https://arxiv.org/abs/1211.5803}.

\bibitem[Joseph and Yu(2016)]{joseph2016}
Antony Joseph and Bin Yu.
\newblock Impact of regularization on spectral clustering.
\newblock \emph{The Annals of Statistics}, 44\penalty0 (4):\penalty0
  1765--1791, 2016.
\newblock \doi{10.1214/16-AOS1447}.
\newblock URL \url{https://arxiv.org/abs/1312.1733}.

\bibitem[Karrer and Newman(2011)]{karrer2011}
Brian Karrer and M.~E.~J. Newman.
\newblock Stochastic blockmodels and community structure in networks.
\newblock \emph{Physical Review E}, 83:\penalty0 016107, 2011.
\newblock \doi{10.1103/PhysRevE.83.016107}.
\newblock URL
  \url{https://journals.aps.org/pre/abstract/10.1103/PhysRevE.83.016107}.

\bibitem[Krzakala et~al.(2013)Krzakala, Moore, Mossel, Neeman, Sly,
  Zdeborov\'{a}, and Zhang]{krzakala2013}
Florent Krzakala, Cristopher Moore, Elchanan Mossel, Joe Neeman, Allan Sly,
  Lenka Zdeborov\'{a}, and Pan Zhang.
\newblock Spectral redemption in clustering sparse networks.
\newblock \emph{Proceedings of the National Academy of Sciences}, 110\penalty0
  (52):\penalty0 20935--20940, 2013.
\newblock \doi{10.1073/pnas.1312486110}.
\newblock URL \url{https://pmc.ncbi.nlm.nih.gov/articles/PMC3876200/}.

\bibitem[Le et~al.(2017)Le, Levina, and Vershynin]{llv2017}
Can~M. Le, Elizaveta Levina, and Roman Vershynin.
\newblock Concentration and regularization of random graphs.
\newblock \emph{Random Structures \& Algorithms}, 51\penalty0 (3):\penalty0
  538--561, 2017.
\newblock \doi{10.1002/rsa.20713}.
\newblock URL \url{https://arxiv.org/abs/1506.00669}.
\newblock Theorem 2.1 and Remark 2.2; arXiv:1506.00669.

\bibitem[Lei and Rinaldo(2015)]{lei2015}
Jing Lei and Alessandro Rinaldo.
\newblock Consistency of spectral clustering in stochastic block models.
\newblock \emph{The Annals of Statistics}, 43\penalty0 (1):\penalty0 215--237,
  2015.
\newblock URL \url{https://arxiv.org/abs/1312.2050}.

\bibitem[Massouli\'{e}(2014)]{massoulie2014}
Laurent Massouli\'{e}.
\newblock Community detection thresholds and the weak {Ramanujan} property.
\newblock In \emph{Proceedings of the 46th Annual ACM Symposium on Theory of
  Computing}, pages 694--703, 2014.
\newblock URL \url{https://arxiv.org/abs/1311.3085}.

\bibitem[Mossel et~al.(2018)Mossel, Neeman, and Sly]{mns2018}
Elchanan Mossel, Joe Neeman, and Allan Sly.
\newblock A proof of the block model threshold conjecture.
\newblock \emph{Combinatorica}, 38\penalty0 (3):\penalty0 665--708, 2018.
\newblock URL \url{https://arxiv.org/abs/1311.4115}.

\bibitem[Nowicki and Snijders(2001)]{nowicki2001}
Krzysztof Nowicki and Tom A.~B. Snijders.
\newblock Estimation and prediction for stochastic blockstructures.
\newblock \emph{Journal of the American Statistical Association}, 96\penalty0
  (455):\penalty0 1077--1087, 2001.
\newblock \doi{10.1198/016214501753208735}.
\newblock URL
  \url{https://www.tandfonline.com/doi/abs/10.1198/016214501753208735}.

\bibitem[Qin and Rohe(2013)]{qin2013}
Tai Qin and Karl Rohe.
\newblock Regularized spectral clustering under the degree-corrected stochastic
  blockmodel.
\newblock In \emph{Advances in Neural Information Processing Systems},
  volume~26, pages 3120--3128, 2013.
\newblock URL
  \url{https://papers.nips.cc/paper_files/paper/2013/hash/0ed9422357395a0d4879191c66f4faa2-Abstract.html}.

\bibitem[Rohe et~al.(2011)Rohe, Chatterjee, and Yu]{rohe2011}
Karl Rohe, Sourav Chatterjee, and Bin Yu.
\newblock Spectral clustering and the high-dimensional stochastic blockmodel.
\newblock \emph{The Annals of Statistics}, 39\penalty0 (4):\penalty0
  1878--1915, 2011.
\newblock URL \url{https://arxiv.org/abs/1007.1684}.

\bibitem[Rubin-Delanchy et~al.(2022)Rubin-Delanchy, Cape, Tang, and
  Priebe]{rubindelanchy2022}
Patrick Rubin-Delanchy, Joshua Cape, Minh Tang, and Carey~E. Priebe.
\newblock A statistical interpretation of spectral embedding: The generalised
  random dot product graph.
\newblock \emph{Journal of the Royal Statistical Society: Series B (Statistical
  Methodology)}, 84\penalty0 (4):\penalty0 1446--1473, 2022.
\newblock \doi{10.1111/rssb.12509}.
\newblock URL \url{https://academic.oup.com/jrsssb/article/84/4/1446/7073272}.

\bibitem[Snijders and Nowicki(1997)]{snijders1997}
Tom A.~B. Snijders and Krzysztof Nowicki.
\newblock Estimation and prediction for stochastic blockmodels for graphs with
  latent block structure.
\newblock \emph{Journal of Classification}, 14:\penalty0 75--100, 1997.
\newblock \doi{10.1007/s003579900004}.
\newblock URL
  \url{https://www.lunduniversity.lu.se/publication/ea745206-a48e-4a61-a3d9-fdb43967a910}.

\bibitem[Sussman et~al.(2012)Sussman, Tang, Fishkind, and Priebe]{sussman2012}
Daniel~L. Sussman, Minh Tang, Donniell~E. Fishkind, and Carey~E. Priebe.
\newblock A consistent adjacency spectral embedding for stochastic blockmodel
  graphs.
\newblock \emph{Journal of the American Statistical Association}, 107\penalty0
  (499):\penalty0 1119--1128, 2012.
\newblock \doi{10.1080/01621459.2012.699795}.
\newblock URL \url{https://arxiv.org/abs/1108.2228}.

\bibitem[Sussman et~al.(2014)Sussman, Tang, and Priebe]{sussman2014}
Daniel~L. Sussman, Minh Tang, and Carey~E. Priebe.
\newblock Consistent latent position estimation and vertex classification for
  random dot product graphs.
\newblock \emph{IEEE Transactions on Pattern Analysis and Machine
  Intelligence}, 36\penalty0 (1):\penalty0 48--57, 2014.
\newblock \doi{10.1109/TPAMI.2013.135}.
\newblock URL \url{https://arxiv.org/abs/1207.6745}.

\bibitem[Suwan et~al.(2016)Suwan, Lee, Tang, Sussman, Tang, and
  Priebe]{suwan2016}
Shakira Suwan, Dominic~S. Lee, Runze Tang, Daniel~L. Sussman, Minh Tang, and
  Carey~E. Priebe.
\newblock Empirical {Bayes} estimation for the stochastic blockmodel.
\newblock \emph{Electronic Journal of Statistics}, 10\penalty0 (1):\penalty0
  761--782, 2016.
\newblock \doi{10.1214/16-EJS1115}.
\newblock URL \url{https://arxiv.org/abs/1405.6070}.

\bibitem[Young and Scheinerman(2007)]{young2007}
Stephen~J. Young and Edward~R. Scheinerman.
\newblock Random dot product graph models for social networks.
\newblock In Anthony Bonato and Fan R.~K. Chung, editors, \emph{Algorithms and
  Models for the Web-Graph}, volume 4863 of \emph{Lecture Notes in Computer
  Science}, pages 138--149. Springer, 2007.
\newblock \doi{10.1007/978-3-540-77004-6_11}.
\newblock URL
  \url{https://link.springer.com/chapter/10.1007/978-3-540-77004-6_11}.

\bibitem[Zhang and Zhou(2016)]{zhangzhou2016}
Anderson~Y. Zhang and Harrison~H. Zhou.
\newblock Minimax rates of community detection in stochastic block models.
\newblock \emph{The Annals of Statistics}, 44\penalty0 (4):\penalty0
  1819--1857, 2016.
\newblock URL \url{https://arxiv.org/abs/1507.05313}.

\bibitem[Zhang(2024)]{zhang}
Anderson~Ye Zhang.
\newblock Fundamental limits of spectral clustering in stochastic block models.
\newblock \emph{IEEE Transactions on Information Theory}, 70\penalty0
  (10):\penalty0 7320--7348, 2024.
\newblock \doi{10.1109/TIT.2024.3425581}.
\newblock URL \url{https://arxiv.org/abs/2301.09289}.

\bibitem[Zhao et~al.(2012)Zhao, Levina, and Zhu]{zhao2012}
Yunpeng Zhao, Elizaveta Levina, and Ji~Zhu.
\newblock Consistency of community detection in networks under degree-corrected
  stochastic block models.
\newblock \emph{The Annals of Statistics}, 40\penalty0 (4):\penalty0
  2266--2292, 2012.
\newblock URL \url{https://arxiv.org/abs/1110.3854}.

\bibitem[Zhou and Amini(2019)]{zhouamini2019}
Zhixin Zhou and Arash~A. Amini.
\newblock Analysis of spectral clustering algorithms for community detection:
  The general bipartite setting.
\newblock \emph{Journal of Machine Learning Research}, 20\penalty0
  (47):\penalty0 1--47, 2019.
\newblock URL \url{https://jmlr.org/papers/v20/18-170.html}.

\bibitem[Zhou and Amini(2020)]{zhouamini2020}
Zhixin Zhou and Arash~A. Amini.
\newblock Optimal bipartite network clustering.
\newblock \emph{Journal of Machine Learning Research}, 21\penalty0
  (40):\penalty0 1--68, 2020.
\newblock URL \url{https://jmlr.org/papers/v21/19-299.html}.

\bibitem[Zhou and Li(2020)]{zhouli}
Zhixin Zhou and Ping Li.
\newblock Rate optimal {Chernoff} bound and application to community detection
  in the stochastic block models.
\newblock \emph{Electronic Journal of Statistics}, 14\penalty0 (1):\penalty0
  1302--1347, 2020.
\newblock \doi{10.1214/20-EJS1686}.
\newblock URL \url{https://arxiv.org/abs/1812.11269}.

\bibitem[Zhou et~al.(2021)Zhou, Zhou, Li, and Zhang]{zhou2021subspace}
Zhixin Zhou, Fan Zhou, Ping Li, and Cun-Hui Zhang.
\newblock Rate-optimal subspace estimation on random graphs.
\newblock In \emph{Advances in Neural Information Processing Systems},
  volume~34, pages 20283--20294, 2021.
\newblock URL
  \url{https://proceedings.neurips.cc/paper_files/paper/2021/hash/a97f6e2fedcabc887911dc9b5fd3ccc3-Abstract.html}.

\end{thebibliography}
\end{document}